\DeclareMathOperator*{\argmin}{arg\,min}
\DeclareMathOperator\dist{dist}
\DeclareMathOperator\proj{proj}
\DeclareMathOperator\wass{\text{Wass}}
\DeclareMathOperator\na{\text{narrow}}
\DeclareMathOperator\dom{dom}
\DeclareMathOperator\tang{Tan}
\DeclareMathOperator\JKO{JKO}
\DeclareMathOperator\KL{D_{KL}}
\DeclareMathOperator\FI{FI}
\DeclareMathOperator\MMD{D_{MML}}
\DeclareMathOperator\abs{abs}
\newtheorem{remark}{Remark}%
\newtheorem{lemma}{Lemma}
\newtheorem{assumption}{Assumption}
\newtheorem{definition}{Definition}
\newtheorem{theorem}{Theorem}
\title{Non-geodesically-convex optimization in the Wasserstein space}
\author{
Hoang Phuc Hau Luu\and
\textbf{Hanlin Yu}\and
\textbf{Bernardo Williams}\and
\textbf{Petrus Mikkola}\and
\textbf{Marcelo Hartmann}\and
\textbf{Kai Puolam\"aki}\and
\textbf{Arto Klami}\and\\
{Department of Computer Science, University of Helsinki}\\
}
\begin{document}

\maketitle

\begin{abstract}
We study a class of optimization problems in the Wasserstein space (the space of probability measures) where the objective function is \emph{nonconvex} along generalized geodesics. Specifically, the objective exhibits some difference-of-convex structure along these geodesics. The setting also encompasses sampling problems where the logarithm of the target distribution is difference-of-convex. We derive multiple convergence insights for a novel {\em semi Forward-Backward Euler scheme} under several nonconvex (and possibly nonsmooth) regimes. Notably, the semi Forward-Backward Euler is just a slight modification of the Forward-Backward Euler whose convergence is---to our knowledge---still unknown in our very general non-geodesically-convex setting.
\end{abstract}


\section{Introduction}
\label{sect:introduction}

Sampling and optimization are intertwined. For example, the (overdamped) Langevin dynamics, typically considered a sampling algorithm, can be considered as gradient descent optimization where a suitable amount of Gaussian noise is injected at each step. There are also deeper connections. At the limit of infinitesimal stepsize, the law of the Langevin dynamics is governed by the Fokker-Planck equation describing a diffusion over time of probability measures. In the seminal paper \cite{jordan1998variational}, Jordan, Kinderlehrer, and Otto reinterpreted the Fokker-Planck equation as the gradient flow of the functional relative entropy, a.k.a. Kullback-Leibler (KL) divergence, in the (Wasserstein) space of finite second-moment probability measures equipped with the Wasserstein metric. The discovery connects the two fields and encourages optimization in the Wasserstein space, even conceptually, as it directly gives insight into the sampling context. Studies in continuous-time dynamics  \cite{chizat2018global,blanchet2018family,taghvaei2019accelerated,erbar2010heat} seem natural and enjoy nice theoretical properties without discretization errors. Another line of research studies discretization of Wasserstein gradient flow by either quantifying the discretization error between the continuous-time flow and the discrete-time flow \cite{jordan1998variational,vempala2019rapid,durmus2017nonasymptotic,dalalyan2017theoretical,Durmus19highdim} or viewing discrete-time flows as \emph{iterative optimization schemes} in the Wasserstein space \cite{salim2020wasserstein,durmus2019analysis,wibisono2018sampling,bernton2018langevin} where the primary focus is on (geodesically) \emph{convex} optimization problems. 

Nonconvex, nonsmooth optimization is challenging, even in Euclidean space, quoting Rockafellar \cite{rockafellar1993lagrange}: \textit{``In fact the great watershed in optimization isn’t between linearity and nonlinearity, but convexity and nonconvexity.''}
The landscape of nonconvex problems is mostly underexplored in the Wasserstein space. In the sampling language, it amounts to sampling from a \emph{non-log-concave} and possibly \emph{non-log-Lipschitz-smooth} target distribution. Recently, Balasubramanian et al. \cite{balasubramanian2022towards} advocated the need for a sound theory for non-log-concave sampling and provided some guarantees for the unadjusted Langevin algorithm (ULA) in sampling from log-smooth (Lipschitz/H\"older smooth) densities. These results are preliminary for the ULA (and its variants) with a specific class of densities (smooth). Theoretical understandings of other classes of algorithms and densities are needed.

We approach the subject through the lens of nonconvex optimization in the space of probability distributions and pose discretized Wasserstein gradient flows as iterative minimization algorithms. This allows us to, on the one hand, use and extend tools from classical nonconvex optimization and, on the other hand, derive more connections between sampling and optimization.

We study the following \emph{non-geodesically-convex} optimization problem defined over the space $\mathcal{P}_2(X)$ of probability measures $\mu$ over $X=\mathbb{R}^d$ with finite second moment, i.e., $\int{\Vert x \Vert^2}d\mu(x) < +\infty$,
\begin{equation}
\label{eq:mainoptproblem}
    \min_{\mu \in \mathcal{P}_2(X)} \mathcal{F}(\mu):= \mathcal{E}_{F}(\mu) + \mathscr{H}(\mu):= \mathcal{E}_{G-H}(\mu) + \mathscr{H}(\mu)
\end{equation}
where $F: X \to \mathbb{R}$ is a \emph{nonconvex} function which can be represented as a \emph{difference} of two convex functions $G$ and $H$, $\mathcal{E}_F(\mu):=\int{F(x)}d\mu(x)$ is the potential energy, and $\mathscr{H}: \mathcal{P}_2(X) \to \mathbb{R} \cup \{+\infty\}$ plays a role as the regularizer which is assumed to be a convex function along \emph{generalized geodesics}. 


\paragraph{Why difference-of-convex structure?}
Nonconvexity lies at the difference-of-convex (DC) structure $F=G-H$, where $G$ and $H$ are called the first and second DC components, respectively. $F$ being nonconvex implies $\mathcal{E}_{F}$ being non-geodesically-convex in general. First, the class of DC functions is very rich, and DC structures are present everywhere in real-world applications \cite{tao1997convex, le2022stochastic, an2005dc,ahn2017difference,cui2018composite,nouiehed2019pervasiveness,pang2017computing}. Weakly convex and Lipschitz smooth ($L$-smooth or simply smooth) functions are two subclasses of DC functions. Furthermore, any continuous function can be approximated by a sequence of DC functions over a compact, convex domain \cite{bavcak2011difference}. We also remark that many nonconvex functions admit quite natural DC decompositions, for example, an $L$-smooth function $F$ has the following splittings: $F(x)=\alpha \Vert x \Vert^2-(\alpha\Vert x \Vert^2-F(x))$ and $F(x) = (F(x)+\alpha\Vert x \Vert^2)-\alpha\Vert x \Vert^2$ whenever $\alpha \geq L/2$. Second, DC functions preserve enough structure to extend convex analysis. Such structure is key in classic DC programming \cite{tao1997convex} and in our Wasserstein space analysis with optimal transport tools. 

\paragraph{Context} Many problems in machine learning and sampling fall into the spectrum of problem \eqref{eq:mainoptproblem}. For example, refer to a discussion in \cite{salim2020wasserstein} that inspired our work. The regularizer $\mathscr{H}$ can be the internal energy \cite[Sect. 10.4.3]{ambrosio2005gradient}. Under McCann's condition, the internal energy is convex along generalized geodesics \cite[Prop. 9.3.9]{ambrosio2005gradient}. In particular, the negative entropy, $\mathscr{H}(\mu)=\int\log(\mu(x)) d\mu(x)$ if $\mu$ is absolutely continuous w.r.t. Lebesgue measure, $+\infty$ otherwise, is a special case of internal energy satisfying McCann's condition. In the latter case, $\mathcal{F}(\mu)=\KL(\mu \Vert \mu^*) + \text{const}$ where $\mu^*(x) \propto \exp(-F(x))$, the optimization problem reduces to a sampling problem with \emph{log-DC} target distribution. In Bayesian inference, the posterior structure depends on both the prior and likelihood. If the likelihood is log-smooth, it exhibits the aforementioned DC splittings. Log-priors, often nonsmooth to capture sparsity or low rank, typically also have explicit DC structures \cite{le2015dc,geng2015non,doan2022low}. In the context of infinitely wide one-layer neural networks and Maximum Mean Discrepancy \cite{mei2019mean,arbel2019maximum,chizat2018global}, let $\mu^*$ be the optimal distribution over a network's parameters, $k$ be a given kernel, the regularizer is then the interaction energy $\mathscr{H}(\mu)=\iint{k(x,y)}d\mu(x) d\mu(y)$ and $F(x)=-2\int{k(x,y)}d\mu^*(y).$ In general, $\mathscr{H}$ is not convex along generalized geodesics and $F$ is nonconvex but not necessarily DC. When the kernel has Lipschitz gradient, we can adjust both $\mathscr{H}$ and $F$ as $\mathscr{H}(\mu)=\iint{k(x,y)}+ \alpha \Vert x\Vert^2 + \alpha \Vert y \Vert^2 d\mu(x) d\mu(y)$ and $F(x) = -2\int{k(x,y)}d\mu^*(y) - 2 \alpha \Vert x \Vert^2$ for some $\alpha>0$ making $\mathscr{H}$ generalized geodesically convex and $F$ concave (hence DC); Appx. \ref{MMDsect}. 

Our idea is to minimize \eqref{eq:mainoptproblem} in the space of probability distributions by discretization of the gradient flow of $\mathcal{F}$, leveraging on the JKO (Jordan, Kinderlehrer, and Otto) operator \eqref{eq:jkoscheme}. In the previous work \cite{wibisono2018sampling}, this has been done with the Forward-Backward (FB) Euler discretization, but it lacks convergence analysis. Recently, Salim et al. \cite{salim2020wasserstein} did some study on FB Euler, but their results do not apply here because $F$ is nonconvex and possibly nonsmooth. Further leveraging on the DC structure of $F$ and inspired by classical DC programming literature \cite{tao1997convex}, we subtly modify the FB Euler to give rise to a scheme named semi FB Euler that enjoys major theoretical advantages as we can provide a wide range of convergence analysis. Regarding the name, "semi" addresses the splitting of the potential energy. The scheme can be nevertheless reinterpreted as FB Euler.



\paragraph{Contributions}
To our knowledge, no prior work studies problem \eqref{eq:mainoptproblem} when $F$ is DC. Therefore, most of the derived results in this paper are novel. We propose and analyze the semi FB Euler \eqref{eq:semiFBEuler} leveraging on the classic DC optimization proof template \cite{tao1997convex,le2018convergence,le2022stochastic} with a substantial accommodation of Wasserstein geometry and derive the following set of new insights:
\begin{itemize}
    \item[Thm. \ref{thm:asymptoticconvergence}] We show that if the $H$ is continuously differentiable, every cluster point of the sequence of distributions $\{\mu_n\}_{n \in \mathbb{N}}$ generated by semi FB Euler is a \emph{critical point} to $\mathcal{F}$. Note that criticality is a notion from the DC programming literature \cite{tao1997convex} and it is a necessary condition for local optimality; See Sect. \ref{subsect:optcharac}.
    \item[Thm. \ref{thm:gradientmapping}] We provide convergence rate of $O(N^{-1})$ in terms of Wasserstein (sub)gradient mapping in the general nonsmooth setting. The notion of gradient mapping \cite{ghadimi2016mini,j2016proximal,nesterov2013introductory} is from the context of proximal algorithms in Euclidean space that is applicable to nonconvex programs where the notion of distance to global solution is---in general---not possible to work out.
    \item[Thm. \ref{thm:frechetnonasymp}] Under the extra assumption that $H$ is continuously twice differentiable and has bounded Hessian, we provide a convergence rate of $O(N^{-\frac{1}{2}})$ in terms of distance of $0$ to the Fr\'echet subdifferential of $\mathcal{F}$. One can think of this as convergence rate to Fr\'echet stationarity, i.e., if $\mu^*$ is a Fr\'echet stationary point of $\mathcal{F}$, then, by definition, $0$ is in the Fr\'echet subdifferential of $\mathcal{F}$ at $\mu^*$. Fr\'echet stationarity is a relatively {sharp} necessary condition for local optimality.

    \item[Thm. \ref{thm:objKL}, \ref{thm:wassKL}] Under the assumptions of Thm. \ref{thm:frechetnonasymp} and additionally $\mathcal{F}$ satisfying the \L ojasciewicz-type inequality for some \L ojasciewicz exponent of $\theta \in [0,1)$, we show that $\{\mu_n\}_{n \in \mathbb{N}}$ is a Cauchy sequence under Wasserstein topology, and thanks to the completeness of the Wasserstein space, the whole sequence $\{\mu_n\}_{n \in \mathbb{N}}$ converges to some $\mu^*$. We show that $\mu^*$ is in fact a global minimizer to $\mathcal{F}$. Furthermore, we provide convergence rate of $\mu_n \to \mu^*$ in three different regimes ($W_2$ denotes the Wasserstein metric): (1) if $\theta = 0$, $W_2(\mu_n,\mu^*)$ converges to $0$ after a finite number of steps; (2) if $\theta \in (0,1/2]$, both $\mathcal{F}(\mu_n)-\mathcal{F}(\mu^*)$ and $W_2(\mu_n,\mu^*)$ converges to $0$ exponentially fast; (3) if $\theta \in (1/2,1)$, both $\mathcal{F}(\mu_n)-\mathcal{F}(\mu^*)$ and $W_2(\mu_n,\mu^*)$ converges sublinearly to $0$ with rates  $O\left( n^{-\frac{1}{2\theta-1}} \right)$ and $O\left(n^{-\frac{1-\theta}{2\theta-1}} \right)$, respectively. When $\mathscr{H}$ is the negative entropy, $\mathcal{F}(\mu_n)-\mathcal{F}(\mu^*)=\KL(\mu_n \Vert \mu^*)$; Therefore, in the sampling context, we provide convergence guarantees in both Wasserstein and KL distances. See Sect. \ref{subsect:fastconvergenceiso} for additional observations and implications.
    
\end{itemize}


\section{Preliminaries}
\subsection{Notations and basic results in measure theory and functional analysis}
We denote by $X=\mathbb{R}^d$, $\mathcal{B}(X)$ the Borel $\sigma$-algebra over $X$, and $\mathscr{L}^d$ the Lebesgue measure on $X$. $\mathcal{P}(X)$ is the set of Borel probability measures on $X$. For $\mu \in \mathcal{P}(X)$, we denote its second-order moment by $\mathfrak m_2(\mu):=\int_X{\Vert x\Vert^2} d\mu(x)$, where $\mathfrak m_2(\mu)$ can be infinity. $\mathcal{P}_2(X) \subset \mathcal{P}(X)$ denotes a set of finite second-order moment probability measures. $\mathcal{P}_{2,\abs}(X) \subset \mathcal{P}_2(X)$ is the set of measures that are absolutely continuous w.r.t. $\mathscr{L}^d$. Here $\mu$-a.e. stands for almost everywhere w.r.t. $\mu$.

Let $C^p(X), C^{\infty}_c(X), C_b(X)$ be the classes of $p$-time continuously differentiable functions, infinitely differentiable functions with compact support, 
 bounded and continuous functions, respectively.


From functional analysis \cite{brezis2011functional}, for each $p \geq 1$, $L^p(X,\mu)$ denotes the Banach space of measurable (where measurable is understood as Borel measurable from now on) functions $f$ such that $\int_{X}{\vert f(x) \vert^p}d\mu(x) < +\infty$. We shall consider an element of $L^p(X,\mu)$ as an equivalent class of functions that agree $\mu$-a.e. on $X$ rather than a sole function. The norm of $f \in L^p(X,\mu)$ is $\Vert f \Vert_{L^p(X,\mu)} = (\int_{X}{\vert f(x) \vert^p}d\mu(x))^{1/p}$. When $p=2$, $L^2(X,\mu)$ is actually a Hilbert space with the inner product $\langle f,g \rangle_{L^2(X,\mu)} = \int_{X}{f(x)g(x)}d\mu(x)$ which induces the mentioned norm. These results can be extended to vector-valued functions. In particular, we denote by $L^2(X,X,\mu)$ the Hilbert space of $\xi: X \to X$ in which $\Vert \xi \Vert \in L^2(X,\mu)$. The norm $\Vert \xi \Vert_{L^2(X,X,\mu)} := (\int_{X}\Vert \xi(x)\Vert^2d\mu(x))^{1/2}$.


We say that $f: X \to \mathbb{R}$ has quadratic growth if there exists $a>0$ such that $\vert f(x) \vert \leq a(\Vert x \Vert^2+1)$ for all $x \in X$. It is clear that if $f$ has quadratic growth and $\mu \in \mathcal{P}_2(X)$, then $f \in L^1(X,\mu).$

 The pushforward of a measure $\mu \in \mathcal{P}(X)$ through a Borel map $T: X \to \mathbb{R}^m$, denoted by $T_{\#}\mu$ is defined by $(T_{\#}\mu)(A):=\mu(T^{-1}(A))$ for every Borel sets $A \subset \mathbb{R}^m.$ 
 

\subsection{Optimal transport \cite{ambrosio2005gradient, ambrosio2006gradient, villani2021topics,villani2009optimal}}
\label{subsect:optimaltransport}

Given $\mu, \nu \in \mathcal{P}(X)$, the principal problem in optimal transport is to find a transport map $T$ pushing $\mu$ to $\nu$, i.e., $T_{\#}\mu = \nu$, in the most cost-efficient way, i.e., minimizing $\Vert x-T(x) \Vert^2$ on $\mu$-average. Monge's formulation for this problem is $\inf_{T: T_{\#}\mu = \nu} \int_{X}{\Vert x-T(x) \Vert^2}d\mu(x)$, where the optimal solution, if exists, is denoted by $T_{\mu}^{\nu}$ and called the optimal (Monge) map. Monge's problem can be ill-posed, e.g., no such $T_{\mu}^{\nu}$ exists when $\mu$ is a Dirac mass and $\nu$ is absolutely continuous \cite{ambrosio2006gradient}.

By relaxing Monge's formulation, Kantorovich considers $\min_{\gamma \in \Gamma(\mu,\nu)} \int_{X \times X}\Vert x-y \Vert^2 d\gamma(x,y)$, where $\Gamma(\mu,\nu)$ denotes the set of probabilities over $X \times X$ whose marginals are $\mu$ and $\nu$, i.e, $\gamma \in \Gamma(\mu,\nu)$ iff ${\proj_1}_{\#}\gamma = \mu, {\proj_2}_{\#} \gamma  =\nu$ where $\proj_1, \proj_2$ are the projections onto the first $X$ space and the second $X$ space, respectively. Such $\gamma$ is called \emph{a plan}. Kantorovich's formulation is well-posed because $\Gamma(\mu,\nu)$ is non-empty (at least $\mu \times \nu \in \Gamma(\mu,\nu)$) and the $\argmin$ element actually exists (see \cite[Sect. 2.2]{ambrosio2006gradient}). The set of optimal plans between $\mu$ and $\nu$ is denoted by $\Gamma_o(\mu,\nu).$ In terms of random variables, any pairs $(X,Y)$ where $X \sim \mu, Y \sim \nu$ is called a coupling of $\mu$ and $\nu$ while it is called an optimal coupling if the joint law of $X$ and $Y$ is in $\Gamma_o(\mu,\nu)$.

In $\mathcal{P}_2(X)$, the $\min$ value in Kantorovich's problem specifies a \emph{valid} metric referred to as Wasserstein distance,
$W_2(\mu,\nu) = (\int_{X \times X} \Vert x-y \Vert^2 d \gamma(x,y))^{1/2}$ for some, and thus all, $\gamma \in \Gamma_o(\mu,\nu)$.
The metric space $(\mathcal{P}_2(X), W_2)$ is then called the Wasserstein space. In $\mathcal{P}_2(X)$, beside the convergence notion induced by the Wasserstein metric, there is a weaker notion of convergence called \emph{narrow convergence}: we say a sequence $\{\mu_n\}_{n \in \mathbb{N}} \subset \mathcal{P}_2(X)$ converges narrowly to $\mu \in \mathcal{P}_2(X)$ if $\int_{X}{\phi(x)} d\mu_n(x) \to \int_{X}{\phi(x)} d\mu(x)$ for all $\phi \in C_b(X).$ Convergence in the Wasserstein metric implies narrow convergence but the converse is not necessarily true. The extra condition to make it true is $\mathfrak m_2(\mu_n) \to \mathfrak m_2(\mu)$. We denote Wasserstein and narrow convergence by $\xrightarrow{\wass}$ and $\xrightarrow{\na}$, respectively.

If $\mu \in \mathcal{P}_{2,\abs}(X), \nu \in \mathcal{P}_2(X)$, Monge's formulation is well-posed and the unique ($\mu$-a.e.) solution exists, and in this case, it is safe to talk about (and use) the optimal transport map $T_{\mu}^{\nu}$. Moreover, there exists some convex function $f$ such that $T_{\mu}^{\nu} = \nabla f$ $\mu$-a.e. Kantorovich's problem also has a unique solution $\gamma$ and it is given by $\gamma = (I,T_{\mu}^{\nu})_{\#}\mu$ where $I$ is the identity map. This is known as {Brenier theorem} or polar factorization theorem \cite{brenier1991polar}.

\subsection{Subdifferential calculus in the Wasserstein space}
\label{subsect:subdiffcalwass}
Apart from being a metric space, $(\mathcal{P}_2(X), W_2)$ also enjoys some {pre}-Riemannian structure making subdifferential calculus on it possible. Let us have a picture of a \emph{manifold} in mind. Firstly, the tangent space \cite{ambrosio2005gradient} of $\mathcal{P}_2(X)$ at $\mu$ is $\tang_{\mu} \mathcal{P}_2(X) := \overline{\{\nabla \psi: \psi \in C_c^{\infty}(X)\}}^{L^2(X,X,\mu)}$, where the closure is w.r.t. the $L^2(X,X,\mu)$-topology. Intuitively, for $\psi \in C_c^{\infty}(X)$, $I + \epsilon \nabla \psi$ is an optimal transport map if $\epsilon>0$ is small enough \cite{lanzetti2022first}, so $\nabla \psi$ plays a role as "tangent vector".

Let $\phi: \mathcal{P}_2(X) \to \mathbb{R}\cup \{+\infty\}$, we denote $\dom(\phi) = \{\mu \in \mathcal{P}_2(X): \phi(\mu) < +\infty\}$. Let $\mu \in \dom(\phi)$, we say that a map $\xi \in L^2(X,X,\mu)$ belongs to the \emph{Fr\'echet subdifferential} \cite{bonnet2019pontryagin,lanzetti2022first} $\partial_F^- \phi(\mu)$ if 
$\phi(\nu)-\phi(\mu) \geq \sup_{\gamma \in \Gamma_o(\mu,\nu)}\int_{X\times X}{\langle \xi(x),y-x \rangle} d\gamma(x,y)  + o(W_2(\mu,\nu))$ for all $\nu \in \mathcal{P}_2(X)$, where the little-o notation means $\lim_{s \to 0}{o(s)/s} = 0.$ If $\partial_F^- \phi(\mu) \neq \emptyset$, we say $\phi$ is Fr\'echet subdifferentiable at $\mu$. We also denote $\dom (\partial_F^- \phi) = \{\mu \in \mathcal{P}_2(X): \partial_F^- \phi(\mu) \neq \emptyset\}$. 

Similarly, we say that $\xi \in L^2(X,X,\mu)$ belongs to the (Fr\'echet) superdifferential $\partial_F^+ \phi(\mu)$ of $\phi$ at $\mu$ if $-\xi \in \partial_F^-(-\phi)(\mu)$. In other words, $\partial_F^-(-\phi)(\mu) = -\partial_F^{+}\phi(\mu).$ 

We say $\phi$ is Wassertein differentiable \cite{bonnet2019pontryagin,lanzetti2022first}  at $\mu \in \dom(\phi)$ if $\partial_F^{-} \phi(\mu) \cap \partial_F^+ \phi(\mu) \neq \emptyset$. We call an element of the intersection, denoted by $\nabla_W \phi(\mu)$, a Wasserstein gradient of $\phi$ at $\mu$, and it holds $\phi(\nu) - \phi(\mu) = \int_{X \times X}{\langle \nabla_W \phi(\mu)(x),y-x \rangle} d \gamma(x,y) + o(W_2(\mu,\nu))$, for all $\nu \in \mathcal{P}_2(X)$ and any $\gamma \in \Gamma_o(\mu,\nu).$ The Wasserstein gradient is not unique in general, but its parallel component in $\tang_{\mu} \mathcal{P}_2(X)$ is unique, and this parallel component is again a valid Wasserstein gradient as the orthogonal component plays no role in the above definitions, i.e., if $\xi^{\perp} \in \tang_{\mu} \mathcal{P}_2(X)^{\perp}$, it holds $\int_{X \times X} \langle \xi^{\perp}(x),y-x \rangle d \gamma(x,y) = 0$ for any $\nu \in \mathcal{P}_2(X)$ and $\gamma \in \Gamma_o(\mu,\nu)$ \cite[Prop. 2.5]{lanzetti2022first}. We may refer to this parallel component as the \emph{unique} Wasserstein gradient of $\phi$ at $\mu$.

\subsection{Optimization in the Wasserstein space}
\label{subsect:optiminwassspace}
A function $\phi: \mathcal{P}_2(X) \to \mathbb{R}\cup\{+\infty\}$ is called \emph{proper} if $\dom(\phi) \neq \emptyset$, while it is called lower semicontinuous (l.s.c) if for any sequence $\mu_n \xrightarrow{\wass} \mu$, it holds $\liminf_{n} \phi(\mu_n) \geq \phi(\mu)$.

We next recall (a simplified version of) \emph{generalized} geodesic convexity.

\begin{definition}\label{def:generalgeo}\cite{salim2020wasserstein}
Let $\mathcal{\phi}: \mathcal{P}_2(X) \to \mathbb{R}\cup\{+\infty\}$. We say $\phi$ is convex along generalized geodesics if $\forall \mu, \pi \in \mathcal{P}_2(X)$, $\forall \nu \in \mathcal{P}_{2,\abs}(X)$, $\phi((t T_{\nu}^{\mu}+(1-t)T_{\nu}^{\pi})_{\#}\nu) \leq t \phi(\mu) + (1-t)\phi(\pi)$, $\forall t \in [0,1]$.
\end{definition}
The curve $t \mapsto (t T_{\nu}^{\mu}+(1-t)T_{\nu}^{\pi})_{\#}\nu$ (called a generalized geodesic) interpolates from $\pi$ to $\mu$ as $t$ runs from $0$ to $1$. The definition says that $\phi$ is convex along these curves. If $\mu \in \mathcal{P}_{2,\abs}(X)$ and $\nu=\mu$, the curve is a geodesic in $(\mathcal{P}_2(X),W_2)$. If the definition is relaxed to the class of geodesics only, we say that $\phi$ is convex along geodesics.

An important characterization of Fr\'echet subdifferential of a geodesically convex function is that we can drop the little-o notation in its definition in Sect. \ref{subsect:subdiffcalwass} \cite[Sect 10.1.1]{ambrosio2005gradient}. As a convention, for a geodesically convex function $\phi$, the Fr\'echet subdifferential $\partial_F^-$ will be simply written as $\partial$.

\paragraph{First-order optimality conditions} Let $\phi: \mathcal{P}_2(X) \to \mathbb{R}\cup \{+\infty\}$ be a proper function. $\mu^* \in \mathcal{P}_2(X)$ is a global minimizer of $\phi$ if $\phi(\mu^*)\leq \phi(\mu), \forall \mu \in \mathcal{P}_2(X).$ For local optimality, we shall use the Wasserstein metric to define neighborhoods. $\mu^* \in \mathcal{P}_2(X)$ is a local  minimizer if there exists $r>0$ such that $\phi(\mu^*) \leq \phi(\mu)$ for all $\mu: W_2(\mu,\mu^*) < r.$ We shall denote $B(\mu^*,r):=\{\mu \in \mathcal{P}_2(X): W_2(\mu,\mu^*)<r\}$ the (open) Wasserstein ball centered at $\mu^*$ with radius $r$. If we replace $<$ by $\leq$ we obtain the notion of a closed Wasserstein ball.

We call $\mu^*$ a {Fr\'echet stationary point} of $\phi$ if $0 \in \partial_F^- \phi(\mu^*).$ Fr\'echet stationarity is a necessary condition for local optimality. In other words, if $\mu^*$ is a local minimizer, it is a Fr\'echet stationary point (Lem. \ref{lem:Frechetlocal} in Appendix). In addition, if $\phi$ is Wasserstein differentiable at $\mu^*$, $\nabla_W \phi(\mu^*)(x)= 0$ $\mu^*$-a.e. \cite{lanzetti2022first}. When $\phi$ is geodesically convex, Fr\'echet stationarity is a sufficient condition for global optimality (Lem. \ref{lem:suffiglobal} in Appendix).

\section{Semi Forward-Backward Euler for difference-of-convex structures}

\subsection{Wasserstein gradient flows: different types of discretizations}
\label{subsect:wassersteinflowdiscretization}

To neatly present the idea of minimizing $\mathcal{F}$ via discretized gradient flow, we first assume for a moment that $F$ is infinitely differentiable and $\mathscr{H}$ is the negative entropy. See also a discussion in \cite{salim2020wasserstein}.

We wish to minimize \eqref{eq:mainoptproblem} in the space of probability distributions. A natural idea is to apply discretizations of the gradient flow of $\mathcal{F}$, where the gradient flow is defined (under some technical assumptions \cite{jordan1998variational}) as the limit $\eta \to 0^+$ of the following scheme with some simple time-interpolation
\begin{align}
    \label{eq:jkoscheme}
    \mu_{n+1} \in \JKO_{\eta\mathcal{F}}(\mu_n), \text{ where } \JKO_{\eta \mathcal{F}}(\mu):= \argmin_{\nu \in \mathcal{P}_2(X)}\mathcal{F}(\nu)+\dfrac{1}{2\eta}W_2^2(\mu,\nu).
\end{align}
Straightforwardly, given a fixed $\eta>0$, \eqref{eq:jkoscheme} gives back a discretization for this flow known as Backward Euler. On the other hand, if $\mathcal{F}$ is Wasserstein differentiable (Sect. \ref{subsect:optimaltransport}), the Forward Euler discretization reads \cite{wibisono2018sampling} $\mu_{n+1} = (I-\eta \nabla_W \mathcal{F}(\mu_n))_{\#}\mu_n$,
which is reinterpreted as doing gradient descent in the space of probability distributions. These are optimization methods that work \emph{directly} on the objective function $\mathcal{F}$ itself. However, the composite structure of $\mathcal{F}$ (a sum of several terms) can also be exploited. One such scheme is the unadjusted Langevin algorithm (ULA), where it first takes a gradient step w.r.t. the potential part, then follows the heat flow corresponding to the entropy part \cite{wibisono2018sampling}:
$\nu_{n+1} = (I-\eta \nabla F)_{\#}\mu_n, \text{ and } \mu_{n+1} = \mathcal{N}(0,2\eta I) * \nu_{n+1}$,
where $*$ is the convolution. This ULA is "viewed" in the space of distributions (Eulerian approach), a more familiar and equivalent form of the ULA from the particle perspective (Lagrangian approach) goes like $x_{n+1} = x_n - \eta \nabla F(x_n) + \sqrt{2\eta}z_k$ where $z_k \sim \mathcal{N}(0,I)$. The ULA is known to be asymptotically biased even for Gaussian target measure (Ornstein-Uhlenbeck process). To correct this bias, the Metropolis-Hasting accept-reject step \cite{roberts1996exponential} is sometimes introduced. Metropolis-Hasting algorithm \cite{metropolis1953equation,hastings1970monte} is a much more general framework that works with quite any proposal (e.g., a random walk) whose convergence analysis is based on the Markov kernel satisfying the detailed balance condition. This convergence framework is different from what is considered in this work: we are more interested in the underlying dynamics of the chain. Metropolis-Hasting algorithm is indeed another story.

In optimization, for composite structure, Forward-Backward (FB) Euler and its variants are methods of choice \cite{parikh2014proximal,beck2009fast}. The corresponding FB Euler for $\mathcal{F}$ will take the gradient step (forward) according to the potential, and JKO step (backward) w.r.t. the negative entropy
\begin{align}
\label{eq:fbeuler}
    \text{(FB Euler)} \quad \nu_{n+1} = (I-\eta \nabla F)_{\#}\mu_n, \text{ and } \mu_{n+1} \in \JKO_{\eta \mathscr{H}}(\nu_{n+1}).
\end{align}
This scheme appears in \cite{wibisono2018sampling} without convergence analysis, and later on \cite{salim2020wasserstein} derives non-asymptotic convergence guarantees under the assumption $F$ being convex and Lipschitz smooth.

In this work, as $F$ is nonconvex and nonsmooth, the theory in \cite{salim2020wasserstein} does not apply, and the convergence (if any) of \eqref{eq:fbeuler} remains mysterious. The DC structure of $F$ can be further exploited. In DC programming \cite{tao1997convex}, the forward step should be applied to the concave part, while the backward step should be applied to the convex part. We hence propose the following semi FB Euler
\begin{align}
\label{eq:semiFBEuler}
    \text{(semi FB Euler)} \quad \nu_{n+1} = (I+\eta \nabla H)_{\#}\mu_n, \text{ and } \mu_{n+1} \in \JKO_{\eta (\mathscr{H}+\mathcal{E}_G)}(\nu_{n+1})
\end{align}
for which we can provide convergence guarantees. Apparently, the difference between semi FB Euler and FB Euler is subtle: while FB Euler does forward on $\mathcal{E}_{G-H} = \mathcal{E}_G - \mathcal{E}_H$ and backward on $\mathscr{H}$, semi FB Euler does forward on $-\mathcal{E}_H$ and backward on $\mathscr{H} + \mathcal{E}_G$; recall that $\mathcal{F} = \mathcal{E}_G - \mathcal{E}_H + \mathscr{H}$.

Theoretically, semi FB Euler enjoys some advantages compared to FB Euler. Thanks to Brenier theorem (Sect. \ref{subsect:optimaltransport}), the pushing step in semi FB Euler is \emph{optimal} since $H$ is convex; Meanwhile, the pushing in FB Euler is non-optimal whose optimal Monge map is not identifiable in general. The convergence of FB Euler is still an open question, even when $F$ is (DC) differentiable. In contrast, we can provide a solid theoretical guarantee for semi FB Euler, especially when $H$ is differentiable. Additionally, we also offer convergence guarantees when $H$ is nonsmooth.

\subsection{Problem setting}
\label{subsect:probsetting}
Our goal is to minimize the non-geodesically-convex functional $\mathcal{F}(\mu)=\mathcal{E}_{F}(\mu) + \mathscr{H}(\mu)$ over $\mathcal{P}_2(X)$, where $F=G-H$ is a DC function. We make Assumption \ref{assum_main} throughout the paper:
\begin{assumption}
\label{assum_main}
    \begin{itemize}
        \item [(i)] The objective function $\mathcal{F}$ is bounded below.
        \item[(ii)] $G, H: X \to \mathbb{R}$ are convex functions and have quadratic growth.
        \item[(iii)] $\mathscr{H}: \mathcal{P}_2(X) \to \mathbb{R}\cup\{+\infty\}$ is proper, l.s.c, and convex along generalized geodesics in $(\mathcal{P}_2(X),W_2)$, and $\dom(\mathscr{H}) \subset \mathcal{P}_{2,\abs}(X).$
        \item[(iv)] There exists $\eta_0 >0$ such that $\forall \eta \in (0,\eta_0)$, $\JKO_{\eta(\mathcal{E}_G+\mathscr{H})}(\mu) \neq \emptyset$ {for every $\mu \in \mathcal{P}_2(X).$}
    \end{itemize}
\end{assumption}

Note that Assumption \ref{assum_main}(iv) is a commonly-used assumption to simplify technical complication when working with the JKO operator \cite{ambrosio2005gradient,bonnet2019pontryagin,salim2020wasserstein}.  Assumption \ref{assum_main}(ii) implies $\mathcal{E}_G$ and $\mathcal{E}_H$ are continuous w.r.t. Wasserstein topology \cite[Prop. 2.4]{ambrosio2013user} ($G, H$ are continuous \cite[Cor. 2.27]{mordukhovich2023easy} and have quadratic growth).



\subsection{Optimality charactizations}
\label{subsect:optcharac}
First, it follows from Assumption \ref{assum_main}(iii), $\dom(\mathcal{F}) \subset \mathcal{P}_{2,\abs}(X).$
By analogy to DC programming in Euclidean space, we call $\mu^* \in \dom(\mathcal{F})$ a \emph{critical point} of $\mathcal{F} = \mathscr{H} + \mathcal{E}_G - \mathcal{E}_H$ if $\partial(\mathscr{H}+\mathcal{E}_G)(\mu^*) \cap \partial \mathcal{E}_H(\mu^*) \neq \emptyset.$ Criticality is a necessary condition for local optimality (Lem. \ref{lem:critlocal}). Moreover, if $\mathcal{E}_H$ is Wasserstein differentiable at $\mu^*$, criticality becomes Fr\'echet stationarity (Lem. \ref{lem:sumrule}).

\subsection{Semi FB Euler: a general setting}


 We allow $H$ to be non-differentiable in some derivations, meaning that $\partial H$ (convex subdifferential \cite{mordukhovich2023easy}) contains multiple elements in general. We first pick a selector $S$ of $\partial H$, i.e., $S: X \to X$, such that $S(x) \in \partial H(x)$. By the axiom of choice (Zermelo, 1904, see, e.g., \cite{herrlich2006axiom}), such selection always exists. However, an arbitrary selector can behave badly, e.g., not measurable. We shall first restrict ourselves to the class of Borel measurable selectors (see Appx. \ref{app:existenceborel} for an existence discussion).
  \begin{assumption}[Measurability]
 \label{assump:measurable}
     The selector $S$ is Borel measurable.
 \end{assumption}
 

 We recall the semi FB scheme \eqref{eq:semiFBEuler} but for nonsmooth $F$ as follows: start with an initial distribution $\mu_0 \in \mathcal{P}_{2,\abs}(X)$, given a discretization stepsize $0<\eta<\eta_0$, we repeat the following two steps:
\begin{align*}
    \nu_{n+1} &= (I+\eta S) _{\#} \mu_n \quad \triangleleft \text{ push forward step;} \quad \mu_{n+1} = \JKO_{\eta (\mathcal{E}_G + \mathscr{H})}(\nu_{n+1}) \quad \triangleleft \text{ JKO step}.
\end{align*}

Well-definiteness and properties: Given $\mu_n \in \mathcal{P}_2(X)$, it follows from Lem. \eqref{gradquadraticgr} that $\nu_{n+1} \in \mathcal{P}_2(X)$. The two generated sequences are then in $\mathcal{P}_{2}(X)$. Moreover, it follows from Assumption \ref{assum_main} that $\{\mu_n\}_{n \in \mathbb{N}}$ are in $\mathcal{P}_{2,\abs}(X)$, so are $\{\nu_n\}_{n \in \mathbb{N}}$ using Lem. \ref{lem:abscont} by noting that $I+\eta S$ is subgradient of a strongly convex function $x \mapsto (1/2) \Vert x\Vert^2 + \eta H(x)$.
\section{Convergence analysis}
\label{Sect:convergenceanalysis}

\subsection{Asymptotic analysis}

\begin{lemma}[Descent lemma]
\label{lem:descent}
    Under Assumptions \ref{assum_main} and \ref{assump:measurable}, let $\{\mu_n\}_{n \in \mathbb{N}}$ be the sequence of distributions produced by semi FB Euler starting from some $\mu_0 \in \mathcal{P}_{2, \abs}(X)$ with $0<\eta < \eta_0$. Then it holds
    $\mathcal{F}(\mu_{n+1}) \leq \mathcal{F}(\mu_n) - \frac{1}{\eta}\int_X{\Vert T_{\nu_{n+1}}^{\mu_n}(x) - T_{\nu_{n+1}}^{\mu_{n+1}}(x) \Vert^2} d\nu_{n+1}(x), \quad \forall n \in \mathbb{N}$.
\end{lemma}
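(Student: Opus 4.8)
The plan is to establish the inequality by chaining together three separate estimates, one for each of the building blocks of $\mathcal{F} = \mathcal{E}_G - \mathcal{E}_H + \mathscr{H}$, evaluated along the two-step update. Recall $\nu_{n+1} = (I+\eta S)_\#\mu_n$ and $\mu_{n+1} = \JKO_{\eta(\mathcal{E}_G+\mathscr{H})}(\nu_{n+1})$. First I would handle the concave/forward part: since $H$ is convex and $S(x)\in\partial H(x)$, the subgradient inequality gives $H(y) \ge H(x) + \langle S(x), y-x\rangle$ pointwise, hence, integrating against $\mu_n$ with the choice of coupling $(I, I+\eta S)_\#\mu_n$ between $\mu_n$ and $\nu_{n+1}$,
\[
  \mathcal{E}_H(\nu_{n+1}) - \mathcal{E}_H(\mu_n) \;\ge\; \eta \int_X \|S(x)\|^2 \, d\mu_n(x),
\]
so that $-\mathcal{E}_H(\nu_{n+1}) \le -\mathcal{E}_H(\mu_n) - \eta\int_X\|S\|^2\,d\mu_n$. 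The quadratic-growth assumption on $H$ together with Lem.~\ref{gradquadraticgr} guarantees all integrals are finite.

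Second, I would extract the gain from the JKO step. By definition of $\mu_{n+1}$ as the $\argmin$ of $\nu \mapsto \mathcal{E}_G(\nu) + \mathscr{H}(\nu) + \frac{1}{2\eta}W_2^2(\nu_{n+1}, \nu)$, comparing the value at $\mu_{n+1}$ with the value at the competitor $\nu_{n+1}$ itself yields
\[
  \mathcal{E}_G(\mu_{n+1}) + \mathscr{H}(\mu_{n+1}) + \frac{1}{2\eta}W_2^2(\nu_{n+1},\mu_{n+1}) \;\le\; \mathcal{E}_G(\nu_{n+1}) + \mathscr{H}(\nu_{n+1}).
\]
This is the crude descent estimate; it already gives a $\frac{1}{2\eta}W_2^2(\nu_{n+1},\mu_{n+1})$ decrease, but the statement asks for $\frac{1}{\eta}\int\|T_{\nu_{n+1}}^{\mu_n} - T_{\nu_{n+1}}^{\mu_{n+1}}\|^2\,d\nu_{n+1}$, a factor of two better and with a different (larger) quantity inside. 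To get this sharper bound one must use generalized geodesic convexity of $\mathcal{E}_G + \mathscr{H}$ with base point $\nu_{n+1}$: write the generalized geodesic from $\mu_{n+1}$ towards $\mu_n$ anchored at $\nu_{n+1}$, namely $t\mapsto \rho_t := \big(tT_{\nu_{n+1}}^{\mu_n} + (1-t)T_{\nu_{n+1}}^{\mu_{n+1}}\big)_\#\nu_{n+1}$, and exploit the strong convexity (modulus $1/\eta$) of $t \mapsto \frac{1}{2\eta}W_2^2(\nu_{n+1},\rho_t)$ along this curve, which is exact because along a generalized geodesic based at $\nu_{n+1}$ the squared Wasserstein distance to $\nu_{n+1}$ is the genuine squared $L^2(\nu_{n+1})$-distance of the transport maps; combining with the optimality (first-order condition) of $\mu_{n+1}$ for the JKO functional upgrades the bare $W_2^2(\nu_{n+1},\mu_{n+1})$ to the full quantity $\int\|T_{\nu_{n+1}}^{\mu_n} - T_{\nu_{n+1}}^{\mu_{n+1}}\|^2 d\nu_{n+1}$ with constant $1/\eta$.

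Finally I would add the two displays, note the terms $\mathcal{E}_G(\nu_{n+1})$, $\mathscr{H}(\nu_{n+1})$ cancel, regroup using $\mathcal{F}(\mu_n) = \mathcal{E}_G(\mu_n) - \mathcal{E}_H(\mu_n) + \mathscr{H}(\mu_n)$ and $\mathcal{F}(\mu_{n+1}) = \mathcal{E}_G(\mu_{n+1}) - \mathcal{E}_H(\mu_{n+1}) + \mathscr{H}(\mu_{n+1})$ — here I also need $\mathcal{E}_G(\mu_n) \le \mathcal{E}_G(\nu_{n+1}) + (\text{a term absorbed above})$, which follows from convexity of $G$ exactly as the $H$-estimate but in the reverse direction, i.e. $G(x+\eta S(x)) \ge G(x) + \langle \partial G(x), \eta S(x)\rangle$ is the wrong sign, so instead one keeps $\mathcal{E}_G$ on the $\nu_{n+1}$ side and lets it cancel against the JKO display — and arrive at the claimed inequality. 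The main obstacle is the sharp constant: obtaining $\frac{1}{\eta}$ rather than $\frac{1}{2\eta}$ and the larger integrand requires carefully invoking generalized geodesic convexity of $\mathcal{E}_G+\mathscr{H}$ together with the exact parallelogram/strong-convexity identity for $W_2^2(\nu_{n+1},\cdot)$ along geodesics based at $\nu_{n+1}$, and checking that the first-order optimality condition for the JKO minimizer is legitimately applicable under only Assumption~\ref{assum_main} (properness, lower semicontinuity, generalized geodesic convexity, and the nonemptiness in (iv)); the measurability Assumption~\ref{assump:measurable} is what makes $I+\eta S$ a bona fide Borel map so that $\nu_{n+1}$ and the couplings above are well-defined.
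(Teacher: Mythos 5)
There is a genuine gap, and it is structural: your three estimates are anchored at the wrong measures, so the function values of $\mathcal{E}_G$, $\mathcal{E}_H$ and $\mathscr{H}$ at the \emph{intermediate} measure $\nu_{n+1}$ enter your chain of inequalities and cannot be removed. Your $H$-step bounds $-\mathcal{E}_H(\nu_{n+1})$, but the claim needs $-\mathcal{E}_H(\mu_{n+1})$, and nothing relates the two. Your JKO step, taken with competitor $\nu_{n+1}$, leaves $\mathcal{E}_G(\nu_{n+1})+\mathscr{H}(\nu_{n+1})$ on the right-hand side; these do not "cancel against the JKO display" (they \emph{are} the JKO display), they cannot be bounded by $\mathcal{E}_G(\mu_n)+\mathscr{H}(\mu_n)$ under Assumption \ref{assum_main} (you yourself note that convexity of $G$ gives the reverse inequality, and for $\mathscr{H}$ there is no control at all --- $\mathscr{H}(\nu_{n+1})$ may even be $+\infty$, making the comparison vacuous). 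Moreover, the target quantity $\int\Vert T_{\nu_{n+1}}^{\mu_n}-T_{\nu_{n+1}}^{\mu_{n+1}}\Vert^2 d\nu_{n+1}$ involves $\mu_n$, so it cannot emerge from any comparison of the JKO objective at $\mu_{n+1}$ versus $\nu_{n+1}$, however sharply you exploit strong convexity of $W_2^2(\nu_{n+1},\cdot)$: the competitor must be $\mu_n$.

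The paper's proof never evaluates any functional at $\nu_{n+1}$; it uses $\nu_{n+1}$ only as the \emph{base point} of generalized geodesics joining $\mu_n$ and $\mu_{n+1}$. Concretely: (a) $S$ is a subgradient of the generalized-geodesically-convex $\mathcal{E}_H$ at $\mu_n$ and equals $\eta^{-1}(T_{\mu_n}^{\nu_{n+1}}-I)$, so Lem.~\ref{lem:generalgeoineq} with base $\nu_{n+1}$ compares $\mathcal{E}_H(\mu_{n+1})$ to $\mathcal{E}_H(\mu_n)$ with cross term $\frac{1}{\eta}\int\langle x-T_{\nu_{n+1}}^{\mu_n},\,T_{\nu_{n+1}}^{\mu_{n+1}}-T_{\nu_{n+1}}^{\mu_n}\rangle\,d\nu_{n+1}$; (b) the JKO optimality gives $\eta^{-1}(T_{\mu_{n+1}}^{\nu_{n+1}}-I)\in\partial(\mathcal{E}_G+\mathscr{H})(\mu_{n+1})$ \cite[Lem.~10.1.2]{ambrosio2005gradient}, and the same lemma with base $\nu_{n+1}$ compares $(\mathcal{E}_G+\mathscr{H})(\mu_n)$ to $(\mathcal{E}_G+\mathscr{H})(\mu_{n+1})$ with the symmetric cross term. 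Adding, the two cross terms sum \emph{exactly} to $\frac{1}{\eta}\int\Vert T_{\nu_{n+1}}^{\mu_n}-T_{\nu_{n+1}}^{\mu_{n+1}}\Vert^2 d\nu_{n+1}$ --- that algebraic cancellation, not strong convexity of the quadratic penalty, is where the constant $1/\eta$ and the enlarged integrand come from. Your strong-convexity idea can be made to work, but only if the JKO comparison is made against the competitor $\mu_n$; as written, the proposal does not close.
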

Lem. \ref{lem:descent} shows that the objective does not increase along semi FB Euler's iterates. Proof of Lem. \ref{lem:descent} is in Appx. \ref{subsect:prooflemdescent}. By using Lem. \ref{lem:descent}, we establish asymptotic convergence for semi FB Euler as follows.

For the asymptotic convergence analysis, we need the following assumption on $H$.
\begin{assumption}
\label{assum:extra}
    $H$ is continuously differentiable.
\end{assumption}
\begin{theorem}[Asymptotic convergence]
\label{thm:asymptoticconvergence}
Under Assumptions \ref{assum_main}, \ref{assum:extra}, let $\{\mu_n\}_{n \in \mathbb{N}}$ and $\{\nu_n\}_{n \in \mathbb{N}}$ be sequences produced by semi FB Euler starting from some $\mu_0 \in \mathcal{P}_{2, \abs}(X)$ with $0<\eta < \eta_0$. If $\{\mu_n\}_{n\in \mathbb{N}}$ is relatively compact with respect to the Wasserstein topology and $\sup_{n \in \mathbb{N}}\mathscr{H}(\nu_n) < +\infty$, then every cluster point of $\{\mu_n\}_{n \in \mathbb{N}}$ is a critical point of $\mathcal{F}$.

\end{theorem}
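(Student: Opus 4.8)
The plan is to adapt the classical DC asymptotic argument to the Wasserstein setting: the pushforward step plays the role of ``$y_n\in\partial H(x_n)$'' and the JKO step the role of ``$x_{n+1}\in\partial G^\star(y_n)$''. First I would sum the descent inequality of Lemma~\ref{lem:descent} over $n=0,\dots,N-1$ and use that $\mathcal{F}$ is bounded below (Assumption~\ref{assum_main}(i)) to obtain $\sum_{n}\tfrac1\eta\int_X\|T_{\nu_{n+1}}^{\mu_n}-T_{\nu_{n+1}}^{\mu_{n+1}}\|^2\,d\nu_{n+1}\le\mathcal{F}(\mu_0)-\inf\mathcal{F}<+\infty$. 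Since $(T_{\nu_{n+1}}^{\mu_n},T_{\nu_{n+1}}^{\mu_{n+1}})_\#\nu_{n+1}$ is a coupling of $\mu_n$ and $\mu_{n+1}$, each summand dominates $W_2^2(\mu_n,\mu_{n+1})$, so $W_2(\mu_n,\mu_{n+1})\to 0$. Given a cluster point $\mu_\infty$, relative compactness yields a subsequence $\mu_{n_k}\xrightarrow{\wass}\mu_\infty$, and then also $\mu_{n_k+1}\xrightarrow{\wass}\mu_\infty$. Because $H$ is convex with quadratic growth, $\nabla H$ is continuous with at most linear growth, so $\mu\mapsto(I+\eta\nabla H)_\#\mu$ is $W_2$-continuous (narrow convergence is clear, and second moments converge by uniform integrability of $\|x\|^2$ along a $W_2$-convergent sequence); hence $\nu_{n_k+1}=(I+\eta\nabla H)_\#\mu_{n_k}\xrightarrow{\wass}\nu_\infty:=(I+\eta\nabla H)_\#\mu_\infty$, with $\mathscr{H}(\nu_\infty)\le\liminf_k\mathscr{H}(\nu_{n_k+1})\le\sup_n\mathscr{H}(\nu_n)<+\infty$.

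Next, writing $\Phi:=\mathcal{E}_G+\mathscr{H}$ (convex along generalized geodesics), I would use that $\mu_{n_k+1}\in\JKO_{\eta\Phi}(\nu_{n_k+1})$ satisfies the three-point variational inequality for JKO minimizers: for every $\pi\in\mathcal{P}_2(X)$,
\[
\Phi(\mu_{n_k+1})+\tfrac{1}{2\eta}W_2^2(\mu_{n_k+1},\pi)+\tfrac{1}{2\eta}W_2^2(\mu_{n_k+1},\nu_{n_k+1})\le\Phi(\pi)+\tfrac{1}{2\eta}W_2^2(\nu_{n_k+1},\pi).
\]
Passing to the limit $k\to\infty$: the right-hand side converges, the two $W_2^2$-terms on the left converge, $\mathcal{E}_G(\mu_{n_k+1})\to\mathcal{E}_G(\mu_\infty)$ by continuity, and $\liminf_k\mathscr{H}(\mu_{n_k+1})\ge\mathscr{H}(\mu_\infty)$ by lower semicontinuity; since $\mathscr{H}$ enters on the ``$\ge$'' side, the inequality survives, giving
\[
\Phi(\mu_\infty)+\tfrac{1}{2\eta}W_2^2(\mu_\infty,\pi)+\tfrac{1}{2\eta}W_2^2(\mu_\infty,\nu_\infty)\le\Phi(\pi)+\tfrac{1}{2\eta}W_2^2(\nu_\infty,\pi)\quad\forall\pi.
\]
Dropping the nonnegative term $\tfrac1{2\eta}W_2^2(\mu_\infty,\pi)$ shows $\mu_\infty\in\JKO_{\eta(\mathcal{E}_G+\mathscr{H})}\big((I+\eta\nabla H)_\#\mu_\infty\big)$, i.e.\ $\mu_\infty$ is a fixed point of semi FB Euler.

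To convert this fixed-point property into criticality, I would apply the first-order optimality condition of the JKO step (using that $\mu_\infty\in\dom(\mathscr{H})\subset\mathcal{P}_{2,\abs}(X)$): $\tfrac1\eta\big(T_{\mu_\infty}^{\nu_\infty}-I\big)\in\partial(\mathcal{E}_G+\mathscr{H})(\mu_\infty)$. But $\nu_\infty=(I+\eta\nabla H)_\#\mu_\infty$ and $I+\eta\nabla H=\nabla\big(\tfrac12\|\cdot\|^2+\eta H\big)$ is the gradient of a strongly convex function, so Brenier's theorem forces $T_{\mu_\infty}^{\nu_\infty}=I+\eta\nabla H$ $\mu_\infty$-a.e., whence $\nabla H\in\partial(\mathcal{E}_G+\mathscr{H})(\mu_\infty)$. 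On the other hand, convexity of $H$ gives $\mathcal{E}_H(\pi)-\mathcal{E}_H(\mu_\infty)=\int[H(y)-H(x)]\,d\gamma\ge\int\langle\nabla H(x),y-x\rangle\,d\gamma$ for every $\pi$ and every $\gamma\in\Gamma(\mu_\infty,\pi)$, and since $\nabla H\in L^2(X,X,\mu_\infty)$ by linear growth and $\mathfrak m_2(\mu_\infty)<\infty$, this is exactly $\nabla H\in\partial\mathcal{E}_H(\mu_\infty)$. Therefore $\partial(\mathcal{E}_G+\mathscr{H})(\mu_\infty)\cap\partial\mathcal{E}_H(\mu_\infty)\neq\emptyset$, i.e.\ $\mu_\infty$ is a critical point of $\mathcal{F}$.

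The main obstacle will be the limit passage in the middle step: $\mathscr{H}$ is only lower semicontinuous, so one cannot naively take limits in a subgradient inequality attached to the moving point $\mu_{n_k+1}$. The resolution is to use the three-point/variational form of the JKO optimality, in which $\Phi(\mu_{n_k+1})$ — hence $\mathscr{H}(\mu_{n_k+1})$ — sits on the side where lower semicontinuity is precisely what is needed. Two supporting technical points require care: that $(I+\eta\nabla H)_\#$ is genuinely $W_2$-continuous (not merely narrowly continuous), which is where the quadratic-growth hypothesis on $H$ and the narrow-plus-second-moment criterion for $W_2$-convergence enter; and the Brenier identification $T_{\mu_\infty}^{\nu_\infty}=I+\eta\nabla H$, which is where the convexity and $C^1$ regularity of $H$ (Assumption~\ref{assum:extra}) are essential for turning ``fixed point'' into ``critical point''.
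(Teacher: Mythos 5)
Your proof is correct, but it takes a genuinely different route from the paper's. The paper establishes criticality of a cluster point $\mu^*$ via the closedness of the subdifferential graph of $\mathcal{E}_G+\mathscr{H}$ (Thm.~\ref{thm:closesubdiff}): this forces it to prove a uniform bound $\sup_n\int\Vert T_{\mu_n}^{\nu_n}-I\Vert^2\,d\mu_n<+\infty$ (which is exactly where the hypothesis $\sup_n\mathscr{H}(\nu_n)<+\infty$ enters, via \eqref{eq:imeineqii}) and then to run a weak-convergence argument through the optimal plans $\rho_n=(I,T_{\mu_n}^{\nu_n})_{\#}\mu_n$ before identifying the limiting subgradient. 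You instead pass to the limit directly in the three-point (strong-convexity) inequality characterizing the JKO minimizer, placing $\mathscr{H}(\mu_{n_k+1})$ on the side where lower semicontinuity is favorable, conclude that $\mu_\infty$ is a fixed point of the scheme, and only then invoke the first-order optimality condition \eqref{eq:Tmun1nu} together with Brenier's identification $T_{\mu_\infty}^{\nu_\infty}=I+\eta\nabla H$. This buys two things: it bypasses the subgradient-graph and plan-convergence machinery entirely, and—as far as I can tell—it never actually uses $\sup_n\mathscr{H}(\nu_n)<+\infty$ (you invoke it only to note $\mathscr{H}(\nu_\infty)<+\infty$, which is not needed downstream since absolute continuity of $\nu_\infty$ already follows from Lem.~\ref{lem:abscont}), so your argument would establish the theorem under strictly weaker hypotheses. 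The one step you should write out in full is the three-point inequality itself: it is not stated as a lemma in the paper, so you must derive it from the fact that $\pi\mapsto\frac{1}{2}\int\Vert T_{\nu_{n_k+1}}^{\pi}-T_{\nu_{n_k+1}}^{\mu_{n_k+1}}\Vert^2\,d\nu_{n_k+1}$ is, together with the generalized geodesic convexity of $\mathcal{E}_G+\mathscr{H}$, a $(1/\eta)$-strongly convex perturbation along generalized geodesics based at $\nu_{n_k+1}$, and that this quantity dominates $W_2^2(\mu_{n_k+1},\pi)$; this is where $\nu_{n_k+1}\in\mathcal{P}_{2,\abs}(X)$ is essential, in the spirit of Lem.~\ref{lem:generalgeoineq} and \cite[Thm.~4.1.2]{ambrosio2005gradient}.
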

Proof of Thm.\ref{thm:asymptoticconvergence} is in Appx. \ref{subsect:proofthmasym}. Thm. \ref{thm:asymptoticconvergence} does not ensure convergence of the whole sequence $\{\mu_n\}_{n \in \mathbb{N}}$; Rather, it guarantees subsequential convergence to critical points of $\mathcal{F}$.

\begin{remark}
    In the Euclidean space, the compactness assumption of the generated sequence is usually enforced via the \emph{coercivity} assumption: $f(x) \to +\infty$ whenever $\Vert x \Vert \to +\infty$. A striking difference in the Wasserstein space is that closed Wasserstein balls are not compact in the Wasserstein topology  \cite[Prop. 4.2]{lanzetti2022first}, making coercivity not sufficient to induce (Wasserstein) compactness. For Thm. \ref{thm:asymptoticconvergence}, we simply assume the sequence $\{\mu_n\}_{n \in \mathbb{N}}$ to be relatively compact.
\end{remark}

\subsection{Non asymptotic analysis}

To measure how fast the algorithm converges, we need some convergence measurement. First, for proximal-type algorithms in Euclidean space, the notion of \emph{gradient mapping} $\mathcal{G}_{\eta}(x_n)$ is usually used (see, e.g., \cite{ghadimi2016mini,nesterov2013introductory} and \cite[Eq. (5)]{j2016proximal}) and we measure the rate $\Vert \mathcal{G}_{\eta}(x_n) \Vert^2 \to 0$.
In analogy as in Euclidean space, we define the \emph{Wasserstein (sub)gradient mapping} as follows $\mathcal{G}_{\eta}(\mu) := \frac{1}{\eta} \left(I - T_{\mu}^{\JKO_{\eta(\mathcal{E}_G+\mathscr H)}((I+\eta S)_{\#}\mu)} \right)$, and we measure the rate of $\Vert \mathcal{G}_{\eta}(\mu_n) \Vert^2_{L^2(X,X,\mu_n)} \to 0$.

\begin{theorem}[Convergence rate: Wasserstein (sub)gradient mapping]
    \label{thm:gradientmapping}
    Under Assumptions \ref{assum_main}, \ref{assump:measurable}, let $\{\mu_n\}_{n \in \mathbb{N}}$ be the sequence of distributions produced by semi FB Euler starting from some $\mu_0 \in \mathcal{P}_{2, \abs}(X)$ with $0<\eta < \eta_0$. Then it holds $\min_{n = \overline{1,N}} \Vert \mathcal{G}_{\eta}(\mu_n) \Vert^2_{L^2(X,X,\mu_n)} = O(N^{-1})$.
\end{theorem}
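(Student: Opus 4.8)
The plan is to convert the per-iteration decrease furnished by the Descent Lemma (Lem.~\ref{lem:descent}) into a bound of the form $\mathcal{F}(\mu_{n+1})\le\mathcal{F}(\mu_n)-\eta\,\|\mathcal{G}_\eta(\mu_n)\|^2_{L^2(X,X,\mu_n)}$, and then to telescope. The first step is to read off what $\mathcal{G}_\eta(\mu_n)$ is along the iterates. Since the scheme produces $\nu_{n+1}=(I+\eta S)_\#\mu_n$ and $\mu_{n+1}=\JKO_{\eta(\mathcal{E}_G+\mathscr H)}(\nu_{n+1})$, the definition of the Wasserstein (sub)gradient mapping gives $\mathcal{G}_\eta(\mu_n)=\tfrac1\eta\big(I-T_{\mu_n}^{\mu_{n+1}}\big)$, where $\mu_{n+1}$ is the iterate produced by the scheme. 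By the well-definiteness of the scheme (using Assumption~\ref{assum_main}(iii) and Lem.~\ref{lem:abscont}) the iterates $\mu_n$ and $\nu_{n+1}$ lie in $\mathcal{P}_{2,\abs}(X)$, so Brenier's theorem (Sect.~\ref{subsect:optimaltransport}) applies: $(I,T_{\mu_n}^{\mu_{n+1}})_\#\mu_n$ is the optimal plan between $\mu_n$ and $\mu_{n+1}$, hence $\|\mathcal{G}_\eta(\mu_n)\|^2_{L^2(X,X,\mu_n)}=\tfrac1{\eta^2}W_2^2(\mu_n,\mu_{n+1})$.

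The second step lower-bounds the subtracted integral in Lem.~\ref{lem:descent}. Because $\nu_{n+1}\in\mathcal{P}_{2,\abs}(X)$, the optimal Monge maps $T_{\nu_{n+1}}^{\mu_n}$ and $T_{\nu_{n+1}}^{\mu_{n+1}}$ exist and satisfy $(T_{\nu_{n+1}}^{\mu_n})_\#\nu_{n+1}=\mu_n$ and $(T_{\nu_{n+1}}^{\mu_{n+1}})_\#\nu_{n+1}=\mu_{n+1}$; consequently $\big(T_{\nu_{n+1}}^{\mu_n},T_{\nu_{n+1}}^{\mu_{n+1}}\big)_\#\nu_{n+1}\in\Gamma(\mu_n,\mu_{n+1})$ is an admissible (generally sub-optimal) coupling, which yields
\[
W_2^2(\mu_n,\mu_{n+1})\le\int_X\big\|T_{\nu_{n+1}}^{\mu_n}(x)-T_{\nu_{n+1}}^{\mu_{n+1}}(x)\big\|^2\,d\nu_{n+1}(x).
\]
Feeding this bound together with Step~1 into Lem.~\ref{lem:descent} gives $\mathcal{F}(\mu_{n+1})\le\mathcal{F}(\mu_n)-\tfrac1\eta W_2^2(\mu_n,\mu_{n+1})=\mathcal{F}(\mu_n)-\eta\,\|\mathcal{G}_\eta(\mu_n)\|^2_{L^2(X,X,\mu_n)}$ for all $n\in\mathbb{N}$.

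For the final step I would sum from $n=1$ to $N$ and telescope, using that $\mathcal{F}(\mu_1)<+\infty$ (the JKO step makes $\mathcal{E}_G+\mathscr H$, hence $\mathcal{F}$, finite at $\mu_1$) and that $\mathcal{F}$ is bounded below by Assumption~\ref{assum_main}(i): $\eta\sum_{n=1}^N\|\mathcal{G}_\eta(\mu_n)\|^2_{L^2(X,X,\mu_n)}\le\mathcal{F}(\mu_1)-\inf_{\mu\in\mathcal{P}_2(X)}\mathcal{F}(\mu)<+\infty$. Since the minimum is at most the average, $\min_{n=\overline{1,N}}\|\mathcal{G}_\eta(\mu_n)\|^2_{L^2(X,X,\mu_n)}\le\tfrac1N\sum_{n=1}^N\|\mathcal{G}_\eta(\mu_n)\|^2_{L^2(X,X,\mu_n)}\le\big(\mathcal{F}(\mu_1)-\inf\mathcal{F}\big)/(\eta N)=O(N^{-1})$. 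I expect the only genuinely delicate point to be the absolute-continuity bookkeeping in Steps~1 and~2 — we need $\mu_n,\nu_{n+1}\in\mathcal{P}_{2,\abs}(X)$ so that $T_{\mu_n}^{\mu_{n+1}}$ exists and computes $W_2^2(\mu_n,\mu_{n+1})$ exactly, while $T_{\nu_{n+1}}^{\mu_n}$ and $T_{\nu_{n+1}}^{\mu_{n+1}}$ exist and together furnish an upper bound on the same quantity; everything else is a routine telescoping argument identical in spirit to the Euclidean proximal-gradient analysis.
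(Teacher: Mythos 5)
Your proposal is correct and follows essentially the same route as the paper: identify $\Vert \mathcal{G}_{\eta}(\mu_n)\Vert^2_{L^2(X,X,\mu_n)}=\eta^{-2}W_2^2(\mu_n,\mu_{n+1})$, bound $W_2^2(\mu_n,\mu_{n+1})$ from above by the integral in the Descent Lemma via the sub-optimal coupling through $\nu_{n+1}$ (the paper phrases this as the composed map $T_{\nu_{n+1}}^{\mu_{n+1}}\circ T_{\mu_n}^{\nu_{n+1}}$ being a non-optimal transport from $\mu_n$ to $\mu_{n+1}$, which is the same estimate), and telescope using the lower bound on $\mathcal{F}$. Your remark that one should start the telescoping at $\mu_1$, where the JKO step guarantees $\mathcal{F}(\mu_1)<+\infty$, is a small point of care the paper glosses over.
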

Proof of Thm. \ref{thm:gradientmapping} is in Appx. \ref{subsect:proofgradmap}. This theorem holds without requiring $G$ and $H$ to be differentiable.

Next, if $H$ is twice differentiable with uniformly bounded Hessian, we can derive a stronger convergence guarantee based on Fr\'echet stationarity (see Sect. \ref{subsect:optiminwassspace}). In other words, we evaluate the rate of $\dist{(0,\partial_F^- \mathcal{F}(\mu_n))} := \inf_{\xi \in \partial^-_F \mathcal{F}(\mu_n)} \Vert \xi \Vert_{L^2(X,X;\mu_n)} \to 0$.

\begin{assumption}
\label{assmption:HHessian}
    $H \in C^2(X)$ whose Hessian is bounded uniformly ($H$ is then $L_H$-smooth).
\end{assumption}
\begin{theorem}[Convergence rate: Fr\'echet subdifferentials]
    \label{thm:frechetnonasymp} Under Assumptions \ref{assum_main}, \ref{assmption:HHessian}, let $\{\mu_n\}_{n \in \mathbb{N}}$ be the sequence of distributions produced by semi FB Euler starting from some $\mu_0 \in \mathcal{P}_{2, \abs}(X)$ with $0<\eta < \eta_0$, then $\min_{n=\overline{1,N}}{\dist{(0,\partial^-_F \mathcal{F}(\mu_{n}))}} = O\left( N^{-\frac{1}{2}}\right).$
\end{theorem}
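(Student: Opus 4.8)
The plan is to produce, for every iterate $\mu_{n+1}$ of semi FB Euler, an explicit element $\xi_{n+1}\in\partial_F^-\mathcal{F}(\mu_{n+1})$ whose squared $L^2(X,X,\mu_{n+1})$-norm is dominated by the one-step decrease $\mathcal{F}(\mu_n)-\mathcal{F}(\mu_{n+1})$, and then telescope using the descent lemma (Lem.~\ref{lem:descent}). To build $\xi_{n+1}$ I would start from the first-order optimality condition of the JKO step: since $\mu_{n+1}=\JKO_{\eta(\mathcal{E}_G+\mathscr H)}(\nu_{n+1})$ minimizes $\nu\mapsto(\mathcal{E}_G+\mathscr H)(\nu)+\tfrac{1}{2\eta}W_2^2(\nu_{n+1},\nu)$, and $\mathcal{E}_G+\mathscr H$ is convex along generalized geodesics while $\nu_{n+1}\in\mathcal{P}_{2,\abs}(X)$, the standard variational characterization of the JKO operator yields $\tfrac1\eta(T_{\mu_{n+1}}^{\nu_{n+1}}-I)\in\partial(\mathcal{E}_G+\mathscr H)(\mu_{n+1})$. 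Under Assumption~\ref{assmption:HHessian}, $H\in C^2(X)$ with bounded Hessian makes $\mathcal{E}_H$ Wasserstein differentiable with $\nabla_W\mathcal{E}_H(\mu)(\cdot)=\nabla H(\cdot)$, which has linear growth and hence belongs to $L^2(X,X,\mu_{n+1})$; feeding this and the previous inclusion into the subdifferential sum rule for $\mathcal{F}=(\mathcal{E}_G+\mathscr H)-\mathcal{E}_H$ (Lem.~\ref{lem:sumrule}) gives $\xi_{n+1}:=\tfrac1\eta(T_{\mu_{n+1}}^{\nu_{n+1}}-I)-\nabla H\in\partial_F^-\mathcal{F}(\mu_{n+1})$, so $\dist(0,\partial_F^-\mathcal{F}(\mu_{n+1}))\le\|\xi_{n+1}\|_{L^2(X,X,\mu_{n+1})}$. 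Note Assumption~\ref{assump:measurable} holds automatically here since $S=\nabla H$ is continuous.

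Next I would rewrite $\xi_{n+1}$ pointwise in the same transport maps that appear in Lem.~\ref{lem:descent}. Because $x\mapsto\tfrac12\|x\|^2+\eta H(x)$ is convex with gradient $I+\eta\nabla H$, Brenier's theorem gives $T_{\mu_n}^{\nu_{n+1}}=I+\eta\nabla H$; and since $\mu_n$, $\nu_{n+1}$, $\mu_{n+1}$ are all absolutely continuous, the optimal maps $T_{\nu_{n+1}}^{\mu_n}$, $T_{\nu_{n+1}}^{\mu_{n+1}}$, $T_{\mu_{n+1}}^{\nu_{n+1}}$ are mutual inverses on the relevant full-measure sets. Setting $z:=T_{\mu_{n+1}}^{\nu_{n+1}}(y)$ and $w:=T_{\nu_{n+1}}^{\mu_n}(z)$ for $y$ in the support of $\mu_{n+1}$, one has $T_{\nu_{n+1}}^{\mu_{n+1}}(z)=y$ and $z=w+\eta\nabla H(w)$, hence $\tfrac1\eta(T_{\mu_{n+1}}^{\nu_{n+1}}(y)-y)=\tfrac1\eta(z-y)=\nabla H(w)+\tfrac1\eta(w-y)$, so that $\xi_{n+1}(y)=\bigl(\nabla H(w)-\nabla H(y)\bigr)+\tfrac1\eta(w-y)$ with $w-y=T_{\nu_{n+1}}^{\mu_n}(z)-T_{\nu_{n+1}}^{\mu_{n+1}}(z)$. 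By $L_H$-smoothness, $\|\xi_{n+1}(y)\|\le(L_H+\tfrac1\eta)\|w-y\|$; squaring, integrating against $\mu_{n+1}$, and changing variables through $(T_{\mu_{n+1}}^{\nu_{n+1}})_{\#}\mu_{n+1}=\nu_{n+1}$ gives $\|\xi_{n+1}\|^2_{L^2(X,X,\mu_{n+1})}\le(L_H+\tfrac1\eta)^2\int_X\|T_{\nu_{n+1}}^{\mu_n}-T_{\nu_{n+1}}^{\mu_{n+1}}\|^2\,d\nu_{n+1}$, and Lem.~\ref{lem:descent} bounds the integral by $\eta\bigl(\mathcal{F}(\mu_n)-\mathcal{F}(\mu_{n+1})\bigr)$. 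Thus $\dist(0,\partial_F^-\mathcal{F}(\mu_{n+1}))^2\le\eta(L_H+\tfrac1\eta)^2\bigl(\mathcal{F}(\mu_n)-\mathcal{F}(\mu_{n+1})\bigr)$; summing over $n$, telescoping, using the lower boundedness of $\mathcal{F}$ (Assumption~\ref{assum_main}(i)), and applying the pigeonhole bound $\min_{n=\overline{1,N}}(\cdot)\le\tfrac1N\sum(\cdot)$ yields $\min_{n=\overline{1,N}}\dist(0,\partial_F^-\mathcal{F}(\mu_n))^2=O(N^{-1})$, i.e.\ $\min_{n=\overline{1,N}}\dist(0,\partial_F^-\mathcal{F}(\mu_n))=O(N^{-1/2})$, which is the claim.

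I expect the main obstacle to be the clean passage from the JKO optimality condition to a genuine element of $\partial_F^-\mathcal{F}$: this requires the subdifferential sum rule in the Wasserstein space, hence that $\mathcal{E}_H$ be truly Wasserstein differentiable (not merely Fr\'echet subdifferentiable), which is exactly why the bounded-Hessian Assumption~\ref{assmption:HHessian} — rather than mere differentiability of $H$ as in Thm.~\ref{thm:asymptoticconvergence} — is needed, together with verifying $\xi_{n+1}\in L^2(X,X,\mu_{n+1})$. The second delicate point is the composition-and-inversion bookkeeping for the three optimal maps, which is valid only because $\mu_n$, $\nu_{n+1}$, $\mu_{n+1}$ all lie in $\mathcal{P}_{2,\abs}(X)$; granted the preliminaries, this is routine but must be handled with care. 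The final telescoping/pigeonhole step is the same as in Thm.~\ref{thm:gradientmapping}.
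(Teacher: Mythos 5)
Your proposal is correct and follows essentially the same route as the paper: both identify the explicit element $\tfrac1\eta(T_{\mu_{n+1}}^{\nu_{n+1}}-I)-\nabla H\in\partial_F^-\mathcal{F}(\mu_{n+1})$ via the JKO optimality condition and the sum rule, bound its norm by a constant times $\bigl(\int_X\Vert T_{\nu_{n+1}}^{\mu_n}-T_{\nu_{n+1}}^{\mu_{n+1}}\Vert^2\,d\nu_{n+1}\bigr)^{1/2}$ using the inverse relation $(I+\eta\nabla H)\circ T_{\nu_{n+1}}^{\mu_n}=I$ and the $L_H$-Lipschitzness of $\nabla H$, and conclude via the descent lemma and telescoping. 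The only cosmetic differences are that your pointwise triangle inequality yields the constant $(L_H+\tfrac1\eta)^2$ in place of the paper's $2(L_H^2+\tfrac1{\eta^2})$, and you pigeonhole on the squared distances directly while the paper sums the unsquared distances and applies Cauchy--Schwarz; both give the same $O(N^{-1/2})$ rate.
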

Proof of Thm. \ref{thm:frechetnonasymp} is in Appx. \ref{subsect:prooffretnonasym}.

\subsection{Fast convergence under isoperimetry and beyond}

\label{subsect:fastconvergenceiso}

Fast convergence can be obtained under \emph{isoperimetry}, e.g., log-Sobolev inequality (LSI). There are certain connections between LSI in sampling and the \L ojasiewicz condition in optimization allowing linear convergence. In nonconvex optimization in Euclidean space, analytic and subanalytic functions are a large class satisfying \L ojasiewicz condition \cite{law1965ensembles, bolte2007lojasiewicz}. Subanalytic DC programs are studied in \cite{le2018convergence}. In the infinite-dimensional setting of the Wasserstein space, the \L ojasiewicz condition should be regarded as functional inequalities \cite{blanchet2018family}.


\begin{assumption}[\L ojasiewicz condition in the Wasserstein space]
    \label{assump:lojaciewicz} Assume that $\mathcal{F}^*$ is the optimal value of $\mathcal{F}$, and assume there exist $r_0 \in (\mathcal{F}^*,+\infty]$, $\theta \in [0,1)$, and $c>0$ such that for all $\mu \in \mathcal{P}_2(X)$, $\mathcal{F}(\mu)-\mathcal{F}^* < r_0 \Rightarrow  c\left(\mathcal{F}(\mu)-\mathcal{F}^* \right)^{\theta} \leq \inf\{\Vert \xi\Vert_{L^2(X,X,\mu)}: \xi \in \partial_F^- \mathcal{F}(\mu)\}$, where the conventions $0^0=0$ and $\inf{\emptyset}=+\infty$ are used. We call $\theta \in [0,1)$ the {\L ojasiewicz exponent} of $\mathcal{F}$ at optimality.
\end{assumption}

\begin{remark}
\label{rmk:connectionLojaLSI}
    If $\mathscr H$ is the is negative entropy, $F \in C^2(X)$ whose Hessian is bounded uniformly, then $\mathcal{F}$ is Wasserstein differentiable at $\mu \in \mathcal{P}_{2,\abs}(X)$ with gradient $\nabla_W \mathcal{F}(\mu) = \frac{\nabla \mu}{\mu} + \nabla F$ provided that all terms are well-defined \cite[Prop. 2.12, E.g. 2.3]{lanzetti2022first}. We have
    $\Vert \nabla_W \mathcal{F}(\mu) \Vert^2_{L^2(X,X,\mu)} = \int{\left\Vert \frac{\nabla \mu(x)}{\mu(x)} + \nabla F(x)\right\Vert^2}d\mu(x)=\int\mu(x) \left \Vert  \nabla \log \frac{\mu(x)}{\mu^*(x)} \right \Vert^2 dx$,
    where $\mu^* \propto \exp(-F)$.  
    On the other hand, $\mathcal{F}(\mu)-\mathcal{F}^* = \KL(\mu \Vert \mu^*)$. The log-Sobolev inequality with parameter $\alpha>0$ inequality reads \cite{otto2000generalization} $\KL(\mu \Vert \mu^*) \leq \frac{1}{2\alpha}\FI(\mu \Vert \mu^*):= \frac{1}{2\alpha}\int \mu(x) \left \Vert  \nabla \log \frac{\mu(x)}{\mu^*(x)} \right \Vert^2 dx$, where $\FI(\mu \Vert \mu^*)$ is the relative Fisher information of $\mu$ w.r.t. $\mu^*$. Therefore, log-Sobolev inequality is a special case of \L ojasiewicz condition with $\theta=1/2.$ In another case, when the objective function is the Maximum Mean Discrepancy, under some regularity assumption of the kernel, it holds \cite{arbel2019maximum}
    \begin{align*}
    2(\mathcal{F}(\mu)-\mathcal{F}(\mu^*)) \leq \Vert \mu^*-\mu\Vert_{\dot{H}^{-1}(\mu)} \times \int{\Vert \nabla_W \mathcal{F}(\mu) \Vert^2} d\mu(x)
\end{align*}
where $\Vert \mu^*-\mu\Vert_{\dot{H}^{-1}(\mu)}$ is the weighted negative Sobolev distance. This is "nearly" the \L ojasiewicz condition, with a caveat that $\Vert \mu^*-\mu\Vert_{\dot{H}^{-1}(\mu)}$ may be unbounded. Nevertheless, assuming the boundedness of this term along the algorithm's iterates is sufficient for convergence.
\end{remark}
\begin{theorem}
\label{thm:objKL}
Under Assumptions \ref{assum_main}, \ref{assmption:HHessian} and Assumption \ref{assump:lojaciewicz} with parameters $(r_0,c,\theta)$. Let $\{\mu_n\}_{n \in \mathbb{N}}$ be the sequence of distributions produced by semi FB Euler starting from some sufficiently warm-up $\mu_0 \in \mathcal{P}_{2, \abs}(X)$ such that $\mathcal{F}(\mu_0)<r_0$ and with stepsize $0<\eta < \eta_0$, then
\begin{itemize}
    \item[(i)] if $\theta=0$, $\mathcal{F}(\mu_n)-\mathcal{F}^*$ converges to $0$ in a finite number of steps;
    \item[(ii)] if $\theta \in (0, 1/2]$, $\mathcal{F}(\mu_n)-\mathcal{F}^* = O\left( \left(\frac{M}{M+1} \right)^{n}\right) \text{ where } M=\frac{{2(\eta^2 L_H^2+1)}}{c^2{\eta}};$
     \item[(iii)] if $\theta \in (1/2,1)$, $\mathcal{F}(\mu_n)-\mathcal{F}^*$ converges sublinearly to $0$, i.e., $\mathcal{F}(\mu_n)-\mathcal{F}^* = O\left( n^{-\frac{1}{2\theta-1}} \right).$
\end{itemize}
\end{theorem}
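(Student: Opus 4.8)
The plan is to combine the descent estimate (Lem.~\ref{lem:descent}) with a lower bound on the ``per-step progress'' expressed in terms of $\dist(0,\partial_F^-\mathcal{F}(\mu_{n}))$, and then feed the resulting scalar recursion into the classical \L ojasiewicz/Chung-type lemmas from nonconvex optimization. Write $a_n := \mathcal{F}(\mu_n)-\mathcal{F}^*\ge 0$; by Lem.~\ref{lem:descent} the sequence $\{a_n\}$ is nonincreasing, and since $\mathcal{F}(\mu_0)<r_0$ we stay in the \L ojasiewicz region $a_n<r_0$ for all $n$, so Assumption~\ref{assump:lojaciewicz} applies at every iterate: $c\,a_n^{\theta}\le \dist(0,\partial_F^-\mathcal{F}(\mu_n))$.

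The first key step is to quantify the right-hand side of the descent lemma. Lem.~\ref{lem:descent} gives $a_n - a_{n+1}\ge \frac1\eta\int\|T_{\nu_{n+1}}^{\mu_n}-T_{\nu_{n+1}}^{\mu_{n+1}}\|^2\,d\nu_{n+1} = \tfrac1\eta\,W_2^2(\mu_{n},\mu_{n+1})$ after identifying the transport displacement with the Wasserstein distance (this uses Assumption~\ref{assum_main}(iii), $\dom(\mathscr H)\subset\mathcal P_{2,\abs}$, so optimal maps exist). The second key step, which is exactly the content already used in the proof of Thm.~\ref{thm:frechetnonasymp}, is an \emph{upper} bound: under Assumption~\ref{assmption:HHessian} ($H$ is $L_H$-smooth) one can exhibit an explicit element of $\partial_F^-\mathcal{F}(\mu_{n+1})$ built from the optimality conditions of the JKO step at iteration $n$ together with the difference $\nabla H(x)$ evaluated along the pushforward, whose $L^2(\mu_{n+1})$-norm is controlled by $\tfrac{\sqrt{\eta^2L_H^2+1}}{\eta}\,W_2(\mu_n,\mu_{n+1})$ (the $\eta^2L_H^2$ term coming from the Lipschitz change of $\nabla H$ between $\mu_n$ and $\nu_{n+1}$, the $+1$ from the JKO/proximal term). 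Combining the two displays yields, for every $n$,
\begin{equation}
\label{eq:objKLrec}
a_n - a_{n+1} \;\ge\; \frac{\eta}{2(\eta^2L_H^2+1)}\,\dist\!\big(0,\partial_F^-\mathcal{F}(\mu_{n+1})\big)^2 \;\ge\; \frac{c^2\eta}{2(\eta^2L_H^2+1)}\,a_{n+1}^{2\theta} \;=\; \frac{1}{M}\,a_{n+1}^{2\theta},
\end{equation}
with $M = 2(\eta^2L_H^2+1)/(c^2\eta)$ as in the statement.

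From \eqref{eq:objKLrec} the three regimes follow by standard scalar arguments. For $\theta=0$: \eqref{eq:objKLrec} reads $a_n-a_{n+1}\ge 1/M$ whenever $a_{n+1}>0$, so $a_n$ cannot stay positive for more than $\lceil M a_0\rceil$ steps — finite termination. For $\theta\in(0,1/2]$: then $2\theta\le 1$, and since $a_{n+1}\le a_0$ we have $a_{n+1}^{2\theta}\ge a_{n+1}\,a_0^{2\theta-1}$ (when $a_0\le 1$; the general case only changes the constant), giving $a_n-a_{n+1}\ge \tfrac{1}{M'}a_{n+1}$, i.e. $a_{n+1}\le \tfrac{M'}{M'+1}a_n$, hence the geometric rate $a_n=O((M/(M+1))^n)$ after tracking constants. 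For $\theta\in(1/2,1)$: \eqref{eq:objKLrec} is the hypothesis of the classical lemma (see e.g.\ the proof template in \cite{le2018convergence}, or Chung's lemma) that $a_n-a_{n+1}\ge \kappa a_{n+1}^{2\theta}$ with $2\theta>1$ forces $a_n=O(n^{-1/(2\theta-1)})$; one proves it by applying the mean value theorem to $t\mapsto t^{1-2\theta}$ between $a_{n+1}$ and $a_n$ to get $a_{n+1}^{1-2\theta}-a_n^{1-2\theta}\ge \kappa'$ for a uniform $\kappa'>0$, then telescoping.

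The main obstacle is establishing the upper estimate $\dist(0,\partial_F^-\mathcal{F}(\mu_{n+1}))\le \tfrac{\sqrt{\eta^2L_H^2+1}}{\eta}W_2(\mu_n,\mu_{n+1})$ rigorously: one must write down a legitimate member of the \emph{Fr\'echet} subdifferential of the composite functional $\mathcal{F}=\mathcal{E}_G-\mathcal{E}_H+\mathscr H$ at $\mu_{n+1}$. This requires (a) the first-order optimality condition of the JKO step $\mu_{n+1}\in\JKO_{\eta(\mathcal{E}_G+\mathscr H)}(\nu_{n+1})$, which supplies an element of $\partial(\mathcal{E}_G+\mathscr H)(\mu_{n+1})$ of the form $\tfrac1\eta(T_{\mu_{n+1}}^{\nu_{n+1}}-I)$; (b) a sum rule (Lem.~\ref{lem:sumrule}) to combine it with $-\nabla_W\mathcal{E}_H(\mu_{n+1})=-\nabla H$, which is where Wasserstein differentiability of $\mathcal{E}_H$ — guaranteed by Assumption~\ref{assmption:HHessian} — is essential; and (c) controlling the mismatch between $\nabla H$ evaluated at $\mu_n$ (used to build $\nu_{n+1}=(I+\eta\nabla H)_\#\mu_n$) and at $\mu_{n+1}$, for which $L_H$-smoothness gives the $\eta L_H$ factor. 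This is precisely the estimate already proved for Thm.~\ref{thm:frechetnonasymp}, so here it is invoked rather than re-derived; the remaining work is the bookkeeping of constants to match the stated $M$.
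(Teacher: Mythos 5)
Your overall strategy is exactly the paper's: show the objective gap stays below $r_0$ by monotonicity, invoke the explicit Fr\'echet subgradient $\eta^{-1}(T_{\mu_{n+1}}^{\nu_{n+1}}-I)-\nabla H$ built from the JKO optimality condition and the sum rule, bound its norm by the per-step progress from the descent lemma, feed the result into the \L ojasiewicz inequality to get the scalar recursion $a_{n+1}^{2\theta}\le M(a_n-a_{n+1})$ with the same constant $M$, and finish with the standard three-regime lemma (the paper cites a ready-made version; you redo the three cases by hand, which is equivalent).

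There is one incorrect intermediate step, though it does not sink the argument. You assert that $\int\Vert T_{\nu_{n+1}}^{\mu_n}-T_{\nu_{n+1}}^{\mu_{n+1}}\Vert^2\,d\nu_{n+1}=W_2^2(\mu_n,\mu_{n+1})$ and then claim the bound $\dist(0,\partial_F^-\mathcal{F}(\mu_{n+1}))\lesssim W_2(\mu_n,\mu_{n+1})$. The equality is false in general: $T_{\nu_{n+1}}^{\mu_{n+1}}\circ T_{\mu_n}^{\nu_{n+1}}$ is a transport from $\mu_n$ to $\mu_{n+1}$ but need not be optimal, so the integral only dominates $W_2^2(\mu_n,\mu_{n+1})$ from above (this is precisely how the paper uses it in the proof of Thm.~\ref{thm:asymptoticconvergence}). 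Consequently the subgradient estimate from the proof of Thm.~\ref{thm:frechetnonasymp} is in terms of $\bigl(\int\Vert T_{\nu_{n+1}}^{\mu_n}-T_{\nu_{n+1}}^{\mu_{n+1}}\Vert^2\,d\nu_{n+1}\bigr)^{1/2}$, \emph{not} in terms of the smaller quantity $W_2(\mu_n,\mu_{n+1})$, and a bound by the latter does not follow from a bound by the former. The fix is simply to drop the detour through $W_2(\mu_n,\mu_{n+1})$: the descent lemma controls the integral itself, $a_n-a_{n+1}\ge\eta^{-1}\int\Vert T_{\nu_{n+1}}^{\mu_n}-T_{\nu_{n+1}}^{\mu_{n+1}}\Vert^2\,d\nu_{n+1}$, and the subgradient estimate is an upper bound by that same integral, so the two chain together directly and yield your recursion with the stated $M$ --- which is exactly how the paper proceeds.
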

Proof of Thm. \ref{thm:objKL} is in Appx. \ref{subsect:proofobjKL}.
\begin{remark} \quad
In the usual sampling case, i.e., $\mathscr{H}$ is the negative entropy, and under log-Sobolev condition, $r_0 = +\infty$. Therefore, $\mu_0$ can be arbitrarily in $\mathcal{P}_{2,\abs}(X)$. In the general case, however, a good enough starting point (i.e., $\mathcal{F}(\mu_0)<r_0$) is needed to guarantee we are in the region where \L ojasiewicz condition comes into play. In such a case, $\mathcal{F}(\mu_n)-\mathcal{F}^* = \KL(\mu_n \Vert \mu^*)$ where $\mu^*(x) \propto \exp(-F(x))$ is the target distribution (see Rmk. \ref{rmk:connectionLojaLSI}), so Thm. \ref{thm:objKL} provides convergence rate of $\{\mu_n\}_{n \in \mathbb{N}}$ to $\mu^*$ in terms of KL divergence and this convergence is exponentially fast if $\theta \in (0,1/2]$. 
\end{remark}


\begin{theorem}
\label{thm:wassKL}
Under the same set of assumptions as in Thm. \ref{thm:objKL}, the sequence $\{\mu_n\}_{n \in \mathbb{N}}$ is a Cauchy sequence under Wasserstein topology. Furthermore, as the Wasserstein space $(\mathcal{P}_2(X),W_2)$ is \emph{complete} \cite[Thm. 2.2]{ambrosio2006gradient}, every Cauchy sequence is convergent, i.e., there exists $\mu^* \in \mathcal{P}_2(X)$ such that $\mu_n \xrightarrow{\wass} \mu^*.$ The limit distribution $\mu^*$ is indeed the global minimizer of $\mathcal{F}$. In addition:
\begin{itemize}
    \item[(i)] if $\theta=0$, $W_2(\mu_n,\mu^*)$ converges to $0$ in a finite number of steps;
    \item[(ii)] if $\theta \in (0,1/2]$,
    $ W_2(\mu_n,\mu^*) = O\left( \left( \frac{M}{M+1}\right)^n\right), \text{ where } M= 1 + \frac{(2(\eta^2L_H^2 + 1))^{\frac{1}{2\theta}}}{(1-\theta)\eta^{\frac{1-\theta}{\theta}} c^{\frac{1}{\theta}}}$;
    \item[(iii)] if $\theta \in (1/2,1)$, $W_2(\mu_n,\mu^*) = O\left(n^{-\frac{1-\theta}{2\theta-1}} \right)$.
\end{itemize}
\end{theorem}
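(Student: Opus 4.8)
The plan is to transplant the classical \L ojasiewicz ``finite trajectory length'' argument (as used by Attouch--Bolte for proximal/gradient methods) into the metric space $(\mathcal P_2(X),W_2)$, using as the surrogate for the ``step length'' the quantity
\[
  \delta_{n+1}:=\Big(\textstyle\int_X\big\|T_{\nu_{n+1}}^{\mu_n}(x)-T_{\nu_{n+1}}^{\mu_{n+1}}(x)\big\|^2\,d\nu_{n+1}(x)\Big)^{1/2},
\]
which satisfies $\delta_{n+1}\ge W_2(\mu_n,\mu_{n+1})$ because $(T_{\nu_{n+1}}^{\mu_n},T_{\nu_{n+1}}^{\mu_{n+1}})_{\#}\nu_{n+1}$ is a coupling of $\mu_n$ and $\mu_{n+1}$. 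Writing $\Delta_n:=\mathcal F(\mu_n)-\mathcal F^*$, I would first assemble three ingredients valid at every iterate: (a) the descent lemma (Lem.~\ref{lem:descent}) reads $\Delta_n-\Delta_{n+1}\ge\tfrac1\eta\,\delta_{n+1}^2$, so $\{\Delta_n\}$ is nonincreasing and the warm start $\Delta_0<r_0$ forces $\Delta_n<r_0$ for all $n$, making Assumption~\ref{assump:lojaciewicz} applicable throughout; (b) the Fr\'echet-subgradient estimate behind Thm.~\ref{thm:frechetnonasymp} --- obtained by differentiating the JKO step and using $L_H$-smoothness of $H$ --- produces $\xi_{n+1}\in\partial_F^-\mathcal F(\mu_{n+1})$ with $\|\xi_{n+1}\|_{L^2(X,X,\mu_{n+1})}\le\beta\,\delta_{n+1}$, $\beta=\sqrt{2(\eta^2L_H^2+1)}/\eta$; (c) \L ojasiewicz at $\mu_{n+1}$ then gives $c\,\Delta_{n+1}^{\theta}\le\dist(0,\partial_F^-\mathcal F(\mu_{n+1}))\le\beta\,\delta_{n+1}$.

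Next I would run the desingularization. Let $\varphi(s):=\tfrac{1}{c(1-\theta)}s^{1-\theta}$ (concave, increasing, with $\varphi'(s)=\tfrac1c s^{-\theta}$). Concavity gives $\varphi(\Delta_n)-\varphi(\Delta_{n+1})\ge\tfrac1c\Delta_n^{-\theta}(\Delta_n-\Delta_{n+1})$; inserting (a) for $\Delta_n-\Delta_{n+1}$ and (c) at index $n$ for $\Delta_n^{-\theta}\ge c/(\beta\,\delta_n)$ yields $\delta_{n+1}^2\le\beta\eta\,\delta_n\,[\varphi(\Delta_n)-\varphi(\Delta_{n+1})]$, hence by AM--GM the key recursion
\[
  \delta_{n+1}\ \le\ \tfrac12\,\delta_n\ +\ \tfrac{\beta\eta}{2}\big[\varphi(\Delta_n)-\varphi(\Delta_{n+1})\big].
\]
Summing over $n$ and absorbing $\tfrac12\sum\delta_n$ into the left telescopes to $\sum_n\delta_{n+1}<\infty$, so $\sum_n W_2(\mu_n,\mu_{n+1})<\infty$; thus $\{\mu_n\}$ is $W_2$-Cauchy and, by completeness of $(\mathcal P_2(X),W_2)$, converges to some $\mu^*$, i.e.\ $\mu_n\xrightarrow{\wass}\mu^*$. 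That $\mu^*$ minimizes $\mathcal F$ follows by combining $\mathcal F(\mu_n)\to\mathcal F^*$ (Thm.~\ref{thm:objKL}) with lower semicontinuity of $\mathcal F$ --- $\mathcal E_F=\mathcal E_G-\mathcal E_H$ is $W_2$-continuous by Assumption~\ref{assum_main}(ii) and $\mathscr H$ is l.s.c.\ by Assumption~\ref{assum_main}(iii) --- giving $\mathcal F(\mu^*)\le\liminf_n\mathcal F(\mu_n)=\mathcal F^*=\inf\mathcal F$.

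For the rates I would pass to tails. With $\rho_N:=\sum_{n>N}\delta_n$ one has $W_2(\mu_N,\mu^*)\le\sum_{n>N}W_2(\mu_{n-1},\mu_n)\le\rho_N$; summing the recursion over $n\ge N$ and rearranging gives $\rho_N\le\delta_N+\beta\eta\,\varphi(\Delta_N)\le\sqrt{\eta\,\Delta_{N-1}}+\tfrac{\beta\eta}{c(1-\theta)}\Delta_N^{1-\theta}$, where the second inequality uses (a) for $\delta_N$. It then remains to substitute the behaviour of $\Delta_n$ from Thm.~\ref{thm:objKL}: if $\theta=0$, $\Delta_n$ (hence $\rho_N$, hence $W_2(\mu_N,\mu^*)$) is eventually zero; if $\theta\in(0,1/2]$, $\Delta_n$ decays geometrically, so both terms on the right decay geometrically and an explicit estimate of the resulting geometric series gives the stated rate $O\big((\tfrac{M}{M+1})^n\big)$ (the precise base being tracked through the constants of (a)--(c)); if $\theta\in(1/2,1)$, $\Delta_N^{1-\theta}=O(N^{-(1-\theta)/(2\theta-1)})$ is the dominant term, giving $W_2(\mu_N,\mu^*)=O(N^{-(1-\theta)/(2\theta-1)})$.

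I expect the summability step --- turning (a), (b), (c) into $\sum_n\delta_{n+1}<\infty$ --- to be the main obstacle: it is the Wasserstein analogue of the Kurdyka--\L ojasiewicz ``the trajectory has finite length'' lemma, but $\mathcal P_2(X)$ has no linear structure, so one cannot subtract or differentiate measures and must route everything through the coupling-based descent inequality, the Fr\'echet-subdifferential bound, and the $W_2$-triangle inequality. In particular one must check that the optimal maps used in (a) and (b) are compatible (both are computed out of $\nu_{n+1}$), that $\varphi$ is invoked only where $\Delta_n<r_0$, and that $\delta_{n+1}$ --- not $W_2(\mu_n,\mu_{n+1})$ --- is the quantity simultaneously controlled by the decrease and by the subgradient, with the genuine steps recovered only via $W_2(\mu_n,\mu_{n+1})\le\delta_{n+1}$. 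Everything downstream (Cauchyness via completeness, identification of $\mu^*$ via lower semicontinuity, and converting the $\Delta_n$-rates of Thm.~\ref{thm:objKL} into $W_2$-rates) is then routine bookkeeping.
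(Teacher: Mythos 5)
Your proposal is correct, and the part that carries the theorem --- the finite-length argument giving Cauchyness --- is essentially the paper's own proof in different packaging: the paper combines the same three ingredients (the descent lemma, the bound $\dist(0,\partial_F^-\mathcal F(\mu_{n+1}))\le\beta\,\delta_{n+1}$ recycled from the proof of Thm.~\ref{thm:frechetnonasymp}, and the \L ojasiewicz inequality applied at every iterate since $\mathcal F(\mu_n)-\mathcal F^*<r_0$ is preserved by monotonicity), uses the same concavity of $t\mapsto t^{1-\theta}$, and arrives at $r_n/\sqrt{r_{n-1}}\le C\left[(\mathcal F(\mu_n)-\mathcal F^*)^{1-\theta}-(\mathcal F(\mu_{n+1})-\mathcal F^*)^{1-\theta}\right]$ with $r_n=\delta_{n+1}^2$, then invokes $r_n/\sqrt{r_{n-1}}+\sqrt{r_{n-1}}\ge 2\sqrt{r_n}$, which is literally your AM--GM step $\delta_{n+1}\le\tfrac12\delta_n+\tfrac{\beta\eta}{2}[\varphi(\Delta_n)-\varphi(\Delta_{n+1})]$; the identification of the limit as a global minimizer via $W_2$-continuity of $\mathcal E_G,\mathcal E_H$ and lower semicontinuity of $\mathscr H$ is also identical. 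Where you genuinely diverge is the rate extraction. The paper stays self-contained in the tail sums $s_i=\sum_{n\ge i}\sqrt{r_n}$: it telescopes to $s_i\le\sqrt{r_{i-1}}+C'(\mathcal F(\mu_i)-\mathcal F^*)^{1-\theta}$, eliminates the objective gap through the \L ojasiewicz bound $(\mathcal F(\mu_i)-\mathcal F^*)^{1-\theta}\le(\beta\delta_i/c)^{(1-\theta)/\theta}$, and (once $r_{i-1}<1$) closes a recursion $s_i\le M(s_{i-1}-s_i)^{(1-\theta)/\theta}$ purely in $s_i$, to which Lem.~\ref{lem:skp1} applies; this is precisely what manufactures the explicit constant $M$ in item (ii). You instead plug the decay of $\Delta_n$ from Thm.~\ref{thm:objKL} into $\rho_N\le\sqrt{\eta\Delta_{N-1}}+\beta\eta\,\varphi(\Delta_N)$. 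That route is valid and recovers the exact exponent in case (iii) and finite termination in case (i), but in case (ii) it produces a geometric rate with base $\max\{q^{1/2},q^{1-\theta}\}=q^{1/2}$, where $q$ is the ratio from Thm.~\ref{thm:objKL}(ii) --- not the displayed $\frac{M}{M+1}$. The qualitative claim (linear convergence in $W_2$) survives, and you flag the constant-tracking yourself, but if you want the stated base you need the paper's closed recursion in $s_i$ rather than importing the objective-value rate.
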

Proof of Thm. \ref{thm:wassKL} is in Appx. \ref{subsect:proofwassKL}. This theorem provides convergence to optimality in terms of Wasserstein distance. 

\begin{remark}
If $\mathscr{H}$ is the negative entropy and $\theta=1/2$, under some technical assumptions on $F$ (e.g., continuously twice differentiable), LSI implies Talagrand inequality \cite{otto2000generalization} (in optimization, known as \L ojasiewicz implies quadratic growth \cite{karimi2016linear}), meaning that KL divergence controls squared Wasserstein distance, so fast convergence under KL divergence implies fast convergence under Wasserstein distance. 
\end{remark}

\section{Practical implementations}
\label{sect:implementation}
The push-forward step $\nu_{n+1} = (I+\eta \nabla H)_{\#}\mu_n$ is rather straightforward: if $Z$ are samples from $\mu_n$ then $Z + \eta \nabla H(Z)$ are samples from $\nu_{n+1}$. On the other hand, to move from $\nu_{n+1}$ to $\mu_{n+1}$ we have to work out the JKO operator. Recent advances \cite{mokrov2021large,alvarezmelis2022optimizing} propose using the gradient of an input-convex neural network (ICNN) \cite{amos2017input} to approximate the optimal Monge map pushing $\nu_{n+1}$ to $\mu_{n+1}$, which we briefly describe as follows. This approach is inspired by Brenier theorem asserting that an optimal Monge map has to be the (sub)gradient field of some convex function. Therefore, one can "parametrize" $\mu \in \mathcal{P}_{2,\abs}(X)$ as $\mu = \nabla \psi_{\#}\nu_{n+1}$ for some convex function $\psi$. We then write the JKO objective as
\begin{align}
\label{eq:noideaever}
     \mathscr H(\nabla \psi_{\#}\nu_{n+1}) +\int_X{G(\nabla \psi(x))}d\nu_{n+1}(x)+ \dfrac{1}{2\eta} \int_X{\Vert x-\nabla \psi(x)\Vert^2}d\nu_{n+1}(x).
\end{align}

While the two last terms (potential energy and squared Wasserstein distance) in \eqref{eq:noideaever} can be handled efficiently by the Monte Carlo method using samples from $\nu_{n+1}$, the first term $\mathscr{H}$ might be complicated as it possibly involves the (unavailable) density of $\nu_{n+1}$. We remark that the easy case would be $\mathscr{H}$ being another potential energy or an interaction energy. In such a case, Monte Carlo approximations are again readily applicable. The tricky case would be $\mathscr{H}$ being the negative entropy that requires the density of $\nu_{n+1}$. Fortunately, we have the following change of entropy formula: {for any $T: X \to X$ diffeomorphic}, any $\rho \in \mathcal{P}_{2,\abs}(X)$, it holds $-\mathscr{H}(T_{\#}\rho) = -\mathscr{H}(\rho)  + \int_X{\log\vert \det \nabla T(x) \vert} d \rho(x)$. Therefore, \eqref{eq:noideaever} can be written as (up to a constant that does not depend on $\psi$)
\begin{align*}
   \int_X \left[{-\log \det \nabla^2 \psi(x)+G(\nabla \psi(x)) + \dfrac{1}{2\eta}\Vert x-\nabla \psi(x)\Vert^2}\right] d\nu_{n+1}(x).
\end{align*}
Note that this entropy formula can be extended naturally to the case of general internal energy \cite{alvarezmelis2022optimizing}, which means we can also handle this general case. Let us now consider the entropy case for simplicity. We can leverage on a class of input convex neural networks \cite{amos2017input} $\psi_{\theta}(x)$ ($\theta$ is the neural network's parameters, $x$ is the input) in which $x \mapsto \psi_{\theta}(x)$ is convex. Optimizing over $\theta$ can then be solved effectively by standard deep learning optimizers (e.g. Adam). The complete scheme is given in Alg. \ref{semiFBEulerFokPl}. We also remark that \cite{alvarezmelis2022optimizing} further proposes fast approximation for $\log \det \nabla^2 \psi$ and \cite{fan22Variational} leverages on the variational formula of the KL to propose an even faster scheme. Nevertheless, these schemes are generally expensive. For illustrative purposes, we adopt the vanilla version of \cite{mokrov2021large}. The iteration complexity is thus cubic in $d$, linear in the size of the ICNN, and linear in the iteration count $k$ \cite{mokrov2021large}. 

\begin{algorithm}[H]
\caption{Semi FB Euler for sampling}
    \begin{algorithmic}
\State{\textbf{Input:} Initial measure $\mu_0 \in \mathcal{P}_{2,\abs}(X)$, discretization step size $\eta>0$, number of steps $K>0$, batch size $B$.}
\For{$k=1$ to $K$}
\For{$i=1,2,\ldots$}
\State Draw a batch of samples $Z \sim \mu_0$ of size $B$;
\State $\Xi \leftarrow (I+\eta \nabla H) \circ \nabla_x \psi_{\theta_k} \circ (I+\eta \nabla H) \circ \nabla_x \psi_{\theta_{k-1}} \circ \ldots \circ (I+\eta \nabla H) (Z)$;
\State $\widehat{W_2^2} \leftarrow \frac{1}{B} \sum_{\xi \in \Xi}\Vert \nabla_x \psi_{\theta}(\xi) -\xi\Vert^2$;
\State $\widehat{\mathcal{U}} \leftarrow \frac{1}{B} \sum_{\xi \in \Xi}{G(\nabla_x \psi_{\theta}(\xi))}$;
\State $\widehat{\Delta \mathscr H} \leftarrow -\frac{1}{B}\sum_{\xi \in \Xi}{\log \det \nabla_x^2 \psi_{\theta}(\xi)}$.
\State $\widehat{\mathcal{L}} \leftarrow \frac{1}{2\eta}\widehat{W_2^2} + \widehat{\mathcal{U}} + \widehat{\Delta \mathscr H}.$
\State Apply an optimization step (e.g., Adam) over $\theta$ using $\nabla_{\theta}\widehat{\mathcal{L}}$.
\EndFor
\State $\theta_{k+1} \leftarrow \theta.$
\EndFor
\end{algorithmic}
\label{semiFBEulerFokPl}
\end{algorithm}

\section{Numerical illustrations}
\label{sect:numericalillustrate}
We perform numerical sampling experiments from non-log-concave distributions: the Gaussian mixture distribution and the distance-to-set-prior \cite{presman2023distance} relaxed von Mises–Fisher distribution. Both are log-DC and the latter has \emph{non-differentiable} logarithmic probability density (see Appx. \ref{appx:numillustration}). Fig. \ref{fig:gaussianmixtureandvmF} presents the sampling results. Experiment details are in  Appx. \ref{sect:implementFokkerPlanck} and Appx. \ref{appx:numillustration}\footnote{Our code is available at \url{https://github.com/MCS-hub/OW24}}.

\begin{figure}[H]
    \centering
    \begin{minipage}{0.245\textwidth}
        \centering
        \includegraphics[width=\textwidth]{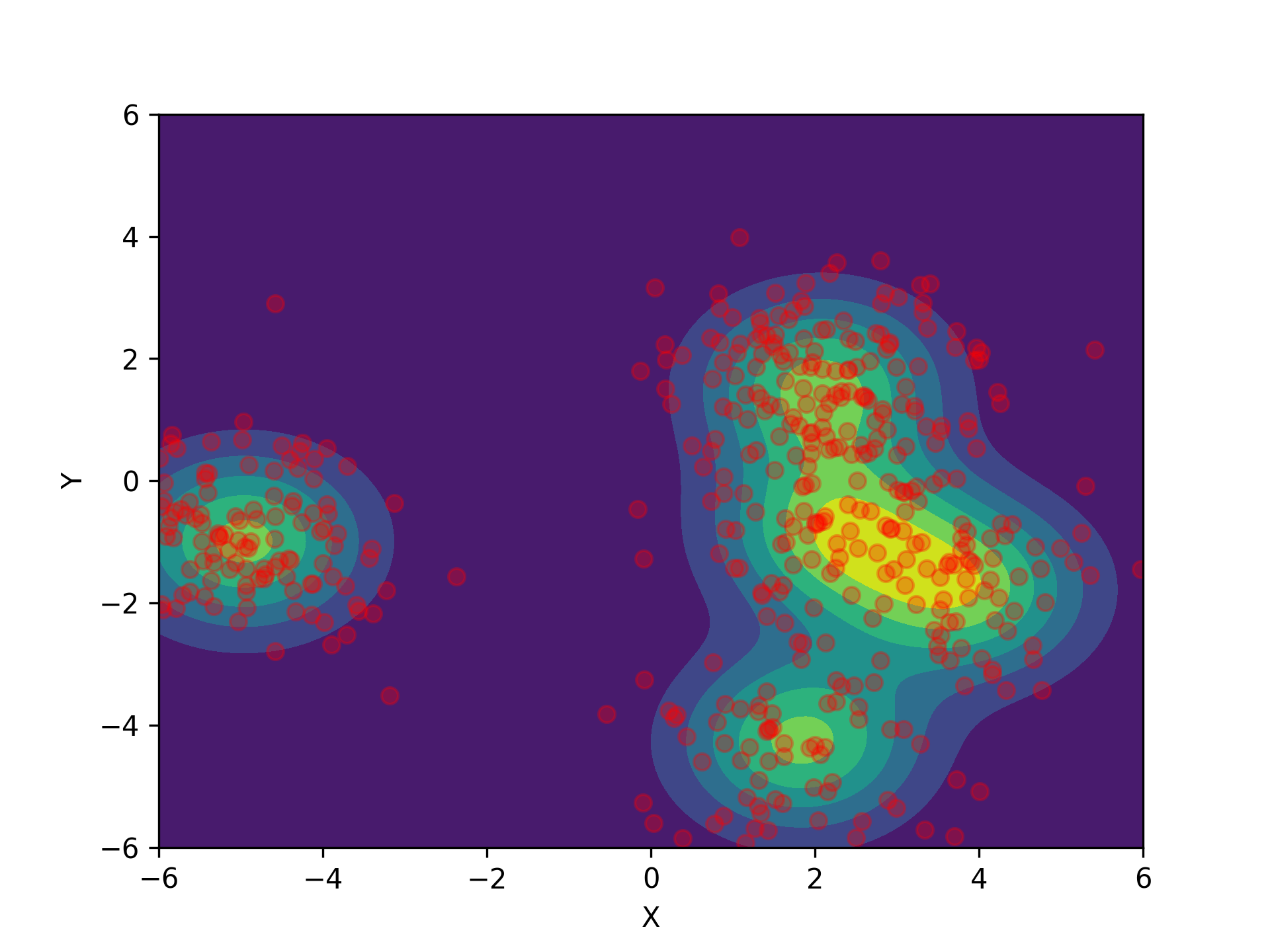}
        \parbox[t]{\textwidth}{\centering (a)}
        \label{fig:samples_semi_fb0}
    \end{minipage}
    \begin{minipage}{0.245\textwidth}
        \centering
        \includegraphics[width=\textwidth]{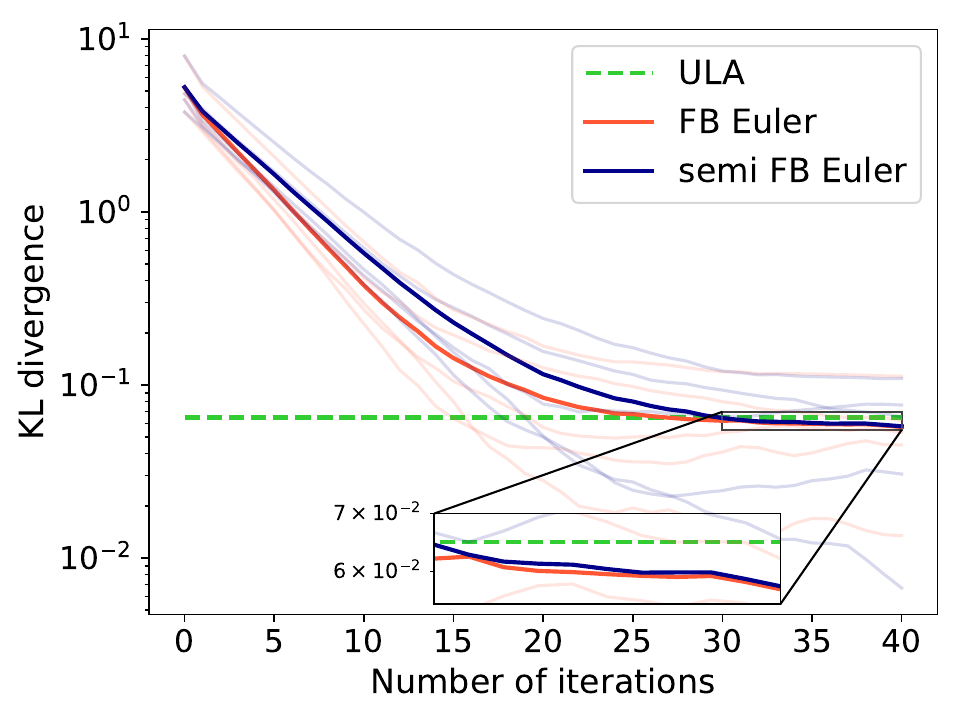}
        \parbox[t]{\textwidth}{\centering (b)}
        \label{fig:KL_comparision}
    \end{minipage}
    \begin{minipage}{0.245\textwidth}
        \centering
        \includegraphics[width=\textwidth]{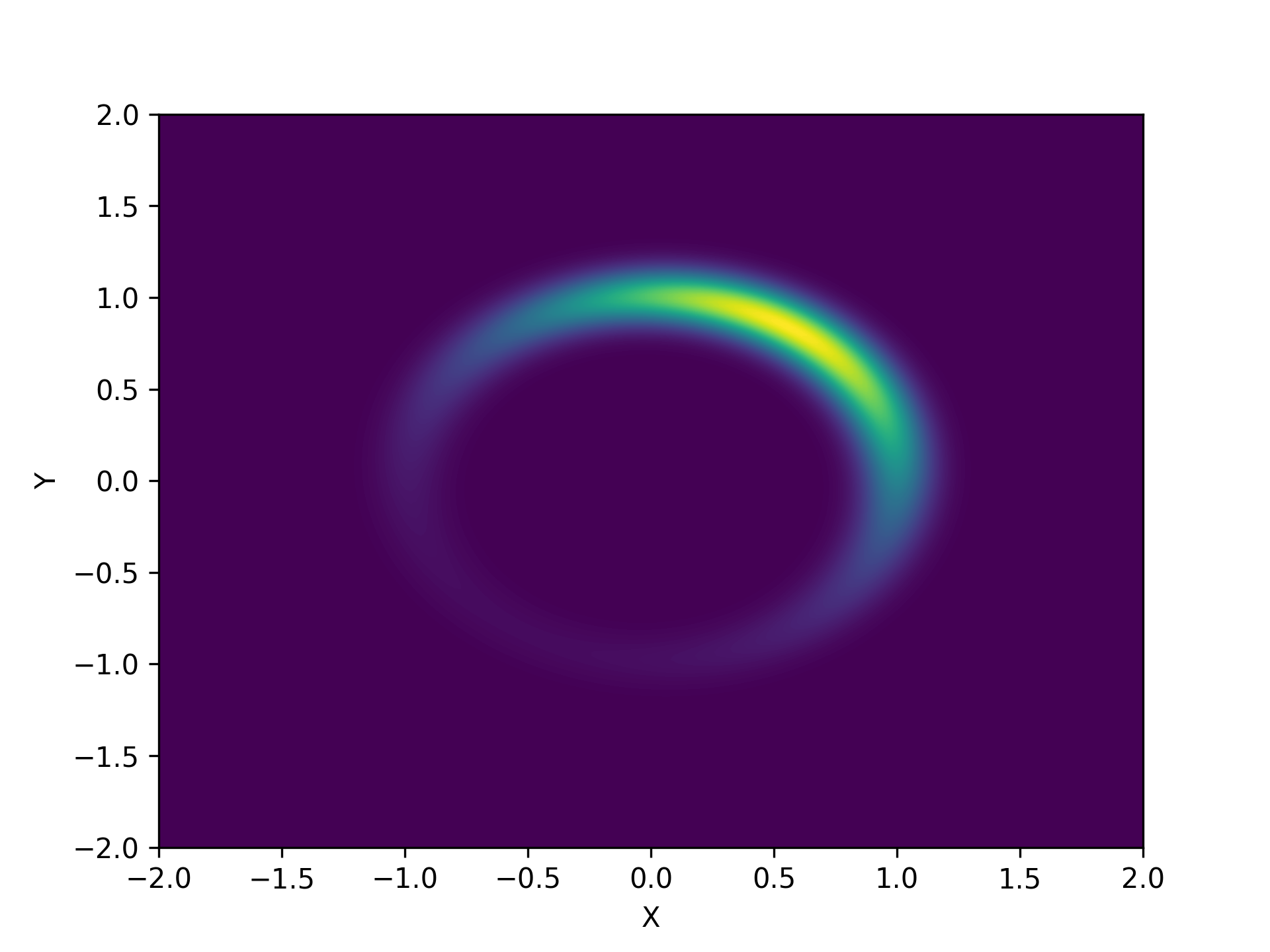}
        \parbox[t]{\textwidth}{\centering (c)}
        \label{fig:vmf_truepdf_semi_fb0}
    \end{minipage}
    \begin{minipage}{0.245\textwidth}
        \centering
        \includegraphics[width=\textwidth]{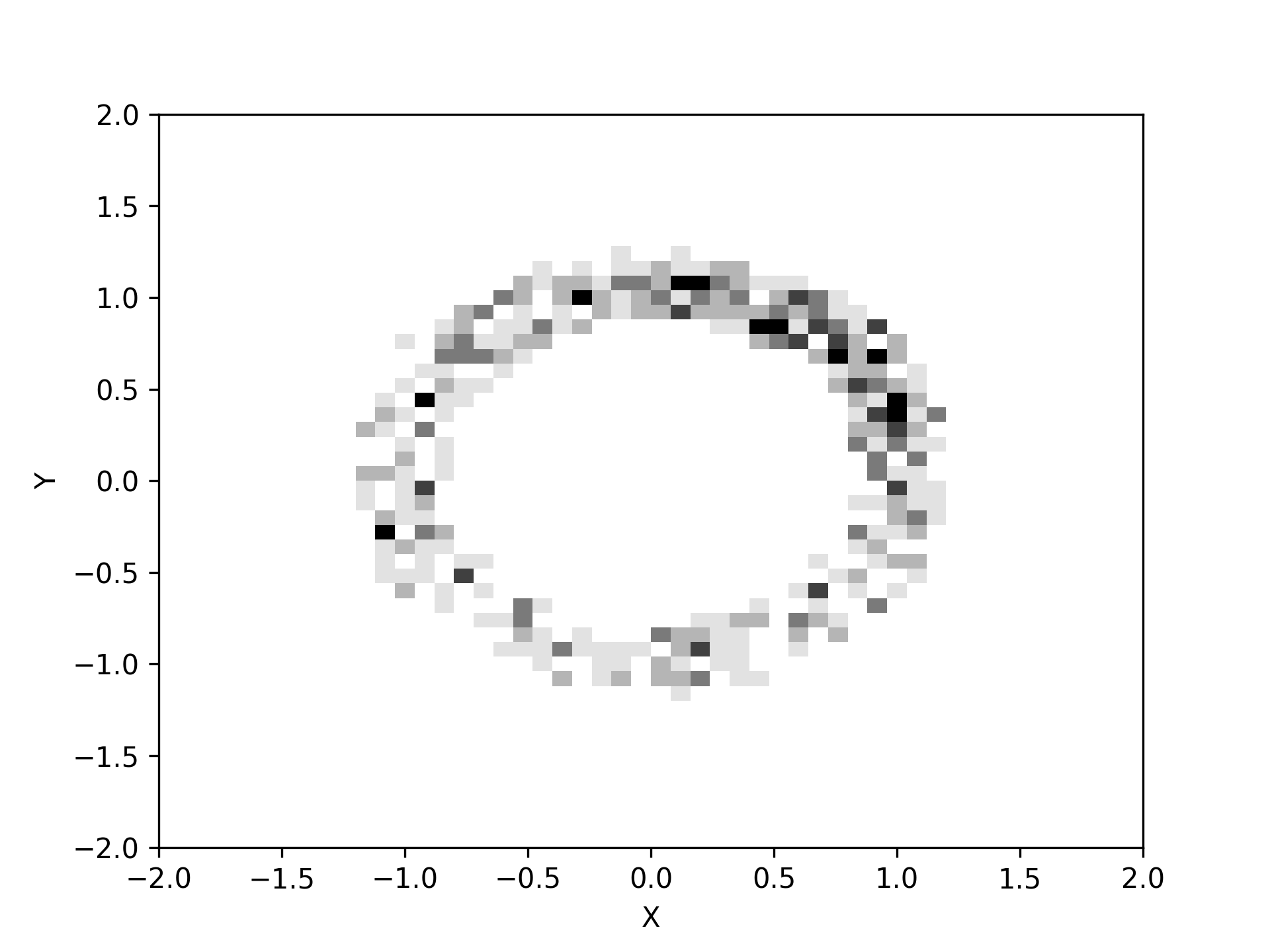}
        \parbox[t]{\textwidth}{\centering (d)}
        \label{fig:vmf_estpdf_semi_fb0}
    \end{minipage}
    \caption{\textbf{(a) and (b)}: Mixture of Gaussians. \textbf{(a)} shows samples obtained from semi FB Euler at iteration $40$ and \textbf{(b)} shows KL divergence along the training process: semi FB Euler with sound theory is as fast as FB Euler. We also show the ULA's final result as a horizontal line for reference; \textbf{(c) and (d)}: Relaxed von Mises-Fisher. \textbf{(c)} shows true probability density, and \textbf{(d)} shows the sample histogram obtained from semi FB Euler. In this experiment, FB Euler fails to work, attributed to the high curvature of the relaxed von Mises-Fisher.}
    \label{fig:gaussianmixtureandvmF}
\end{figure}


\section{Discussion and related work}
We first narrow down our discussion on FB Euler and its variants in the Wasserstein space. When $\mathscr{H}$ is the negative entropy, Wibisono \cite{wibisono2018sampling} provides some insightful discussion on how FB Euler should be consistent (no asymptotic bias) because the backward step is adjoint to the forward step, hence preserves stationarity. However, no convergence theory is presented for FB Euler in the Wasserstein space in \cite{wibisono2018sampling}. Recently, Salim et al. \cite{salim2020wasserstein} provide convergence guarantee for FB Euler within the following setting: $\mathscr{H}$ is convex along generalized geodesics, $F$ is Lipschitz smooth and convex/strongly convex. This setting remains a "convex + convex" structure, while ours has a "convex + concave" structure. A natural extension of our work would be a full-fledged study of DC programming in the Wasserstein space $\Tilde{\mathcal{F}}=\Tilde{\mathcal{G}}-\Tilde{\mathcal{H}}$ where $\Tilde{\mathcal{G}}$ and $\Tilde{\mathcal{H}}$ are convex along generalized geodesics and $\Tilde{\mathcal{H}}$ is not necessarily potential energy $\mathcal{E}_H$. This problem possesses another implementation challenge regarding the Wasserstein gradient of $\Tilde{\mathcal{H}}$.

Other works that bear a tangential relation to ours involve the use of forward-only Euler and ULA. Arbel et al. \cite{arbel2019maximum} study Wasserstein gradient flows with forward Euler for maximum mean discrepancy. This objective function is also nonconvex (specifically, weakly convex). Durmus et al. \cite{durmus2019analysis} analyze the ULA from the convex optimization perspective. Vempala et al. \cite{vempala2019rapid} show that LSI and Hessian boundedness suffice for fast convergence of the ULA where "fast" is understood as fast to the biased target since ULA is a biased algorithm. Balasubramanian et al. \cite{balasubramanian2022towards} analyze the ULA under quite mild conditions: log-density is Lipschitz/H\"older smooth. Bernton \cite{bernton2018langevin} studies the proximal-ULA also under the convex assumption, where the difference to the ULA is the first step: gradient descent is replaced by the proximal operator. Similar to ULA, proximal-ULA is asymptotically biased. To address nonsmoothness, another line of research utilizes Moreau-Yosida envelopes to create smooth approximations of the  ULA dynamics \cite{durmus2018efficient,luu2021sampling}. This approach is also applicable to certain classes of non-log-concave distributions \cite{luu2021sampling} and is more of a flavour of discretization error quantification.

\section{Conclusion}
We propose a new semi FB Euler scheme as a discretization of Wasserstein gradient flow and show that it has favourably theoretical guarantees that the commonly used FB Euler does not yet have if the objective function is not convex along generalized geodesics. Our theoretical analysis opens up interesting avenues for future work. Given the ubiquity of nonconvexity, we hope that the idea can be reused in various contexts, such as with different optimal transport cost functions, different base spaces in the Wasserstein space \cite{lambert2022variational}, or submanifolds of the Wasserstein space (e.g., Bures-Wasserstein \cite{lambert2022variational,diao2023forward}). 
\begin{ack}
This work is supported by the Research Council of Finland's Flagship programme: Finnish Center for Artificial  Intelligence (FCAI), and additionally by grants 345811, 348952, and 346376 (VILMA: Virtual Laboratory for Molecular Level Atmospheric Transformations). The authors wish to thank the Finnish Computing Competence Infrastructure (FCCI) for supporting this project with computational and data storage resources. We thank the anonymous reviewers for their insightful comments and suggestions. H.P.H. Luu specifically thanks Michel Ledoux, Luigi Ambrosio, Alain Durmus, and Shuailong Zhu for helpful information.
\end{ack}

\bibliography{ref}
\bibliographystyle{abbrv}

\newpage

\appendix

\section{Theory}
\label{appendix:theory}
\begin{lemma}[Transfer lemma]\cite[Sect. 1]{ambrosio2013user}
    \label{lem:transfer}
    Let $T: \mathbb{R}^m \to \mathbb{R}^n$ be a measurable map, and $\mu \in \mathcal{P}(\mathbb{R}^m)$, then $T_{\#}\mu \in \mathcal{P}(\mathbb{R}^n)$ and $\int{f(y)} d (T_{\#} \mu)(y) = \int{(f \circ T)(x)}d\mu(x)$ for every measurable function $f:  \mathbb{R}^n \to \mathbb{R}$, where the above identity has to be understood that: one of the integrals exits (potentially  $\pm \infty$) iff the other one exists, and in such a case they are equal. Consequently, for a {bounded function} $f$, the above integrals exist as real numbers that are equal.
\end{lemma}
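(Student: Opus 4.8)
The plan is to prove the Transfer lemma by the standard ``good functions'' bootstrapping argument of measure theory, building the identity up from indicators of Borel sets to arbitrary measurable $f$, and finally specializing to bounded $f$.

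First I would check that $T_{\#}\mu$ is a genuine probability measure on $\mathbb{R}^n$. Measurability of $T$ gives $T^{-1}(A) \in \mathcal{B}(\mathbb{R}^m)$ for every $A \in \mathcal{B}(\mathbb{R}^n)$, so $(T_{\#}\mu)(A) := \mu(T^{-1}(A))$ is well defined; nonnegativity and $(T_{\#}\mu)(\mathbb{R}^n) = \mu(T^{-1}(\mathbb{R}^n)) = \mu(\mathbb{R}^m) = 1$ are immediate, and countable additivity follows from the fact that $T^{-1}$ preserves countable disjoint unions together with countable additivity of $\mu$. Next I would establish the change-of-variables identity for indicators: for $A \in \mathcal{B}(\mathbb{R}^n)$ one has $\mathbf{1}_A \circ T = \mathbf{1}_{T^{-1}(A)}$, hence $\int \mathbf{1}_A\, d(T_{\#}\mu) = (T_{\#}\mu)(A) = \mu(T^{-1}(A)) = \int \mathbf{1}_{T^{-1}(A)}\, d\mu = \int (\mathbf{1}_A \circ T)\, d\mu$. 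By linearity of the integral this extends to nonnegative simple functions.

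For an arbitrary nonnegative measurable $f : \mathbb{R}^n \to [0,\infty]$, I would take an increasing sequence of nonnegative simple functions $f_k \uparrow f$ pointwise (standard approximation theorem); then $f_k \circ T \uparrow f \circ T$ pointwise as well, and two applications of the monotone convergence theorem yield $\int f\, d(T_{\#}\mu) = \lim_k \int f_k\, d(T_{\#}\mu) = \lim_k \int (f_k \circ T)\, d\mu = \int (f \circ T)\, d\mu$, with both sides possibly $+\infty$. Finally, for general measurable $f$ I would decompose $f = f^{+} - f^{-}$ and apply the nonnegative case to each part, using $(f \circ T)^{\pm} = f^{\pm} \circ T$, to get $\int f^{\pm}\, d(T_{\#}\mu) = \int (f^{\pm} \circ T)\, d\mu$ in $[0,\infty]$. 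Since the integral of $f$ is declared to exist exactly when at least one of $\int f^{+}, \int f^{-}$ is finite, the ``one integral exists iff the other does'' clause falls out of these two equalities, and when the integral exists the two values coincide by subtraction. The bounded case is then a corollary: $|f| \le C$ forces $\int |f|\, d(T_{\#}\mu) \le C\,(T_{\#}\mu)(\mathbb{R}^n) = C < \infty$, so both integrals are finite real numbers and equal.

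Honestly, there is no substantial obstacle here: this is a textbook application of the standard machine, which is precisely why it is quoted rather than proved in the cited reference. The only point that deserves care is the bookkeeping in the extended-real-valued statement --- i.e.\ tracking the $\pm\infty$ cases through the $f = f^{+} - f^{-}$ step and phrasing the ``exists iff exists'' equivalence correctly --- and the trivial observation that the composition $f \circ T$ is again measurable so that its integral makes sense in the first place.
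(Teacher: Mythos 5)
Your proof is correct and is the standard monotone-class/bootstrapping argument (indicators, simple functions, monotone convergence, positive/negative parts); the paper itself gives no proof, citing the result directly from the reference, so there is nothing to compare against and your write-up supplies exactly the expected argument.
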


\begin{lemma}\cite[Rmk. 6.2.11]{ambrosio2005gradient}
    \label{lem:optindentity}
    Let $\mu, \nu \in \mathcal{P}_{2,\abs}(X)$, then $T_{\nu}^{\mu} \circ T_{\mu}^{\nu} = I$ $\mu$-a.e. and $T_{\mu}^{\nu} \circ T_{\nu}^{\mu} = I$ $\nu$-a.e.
\end{lemma}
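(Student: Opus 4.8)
The statement is Lem.~\ref{lem:optindentity}: for $\mu,\nu \in \mathcal{P}_{2,\abs}(X)$, the optimal maps satisfy $T_\nu^\mu \circ T_\mu^\nu = I$ $\mu$-a.e.\ and $T_\mu^\nu \circ T_\nu^\mu = I$ $\nu$-a.e. The plan is to exploit the \emph{cyclical-monotonicity / convexity} characterization from Brenier's theorem (Sect.~\ref{subsect:optimaltransport}): since both $\mu$ and $\nu$ are absolutely continuous, there are convex functions $f,g$ with $T_\mu^\nu = \nabla f$ $\mu$-a.e.\ and $T_\nu^\mu = \nabla g$ $\nu$-a.e., and these maps are the unique optimal transport maps in their respective directions. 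The key structural fact is that Brenier potentials for the two directions are \emph{conjugate} to each other: $g = f^*$ up to an additive constant, where $f^*$ denotes the Legendre--Fenchel conjugate. I would first establish this conjugacy from the optimality (equivalently, from the fact that the optimal plan $\gamma = (I,\nabla f)_\#\mu$ is supported on the graph of $\nabla f$ and, read in the reverse marginal ordering, must equal $(\nabla g, I)_\#\nu$ by uniqueness of the optimal plan between $\nu$ and $\mu$).

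Granting $g = f^*$, the second step is purely convex-analytic: for a convex function $f$ and its conjugate $f^*$, one has $\nabla f^*(\nabla f(x)) = x$ whenever $f$ is differentiable at $x$ and $f^*$ is differentiable at $\nabla f(x)$. This is the standard inverse relationship between subgradients of conjugate convex functions, namely $y \in \partial f(x) \iff x \in \partial f^*(y)$, specialized to points of differentiability where subdifferentials are singletons. Thus $T_\nu^\mu(T_\mu^\nu(x)) = \nabla f^*(\nabla f(x)) = x$ at every $x$ where both maps are differentiable in the required sense. To turn this pointwise identity into the claimed $\mu$-a.e.\ statement, I would verify that the differentiability conditions hold $\mu$-a.e.: $\nabla f$ exists $\mu$-a.e.\ (true since $f$ is convex, hence differentiable Lebesgue-a.e., and $\mu \ll \mathscr{L}^d$), and one must check that $\nabla f_\# \mu = \nu$ charges the set where $f^*$ is differentiable, which again follows from $\nu \ll \mathscr{L}^d$ together with the fact that $\nabla f$ transports $\mu$ onto $\nu$. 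The symmetric identity $T_\mu^\nu \circ T_\nu^\mu = I$ $\nu$-a.e.\ follows by swapping the roles of $\mu$ and $\nu$ (and of $f$, $f^*$).

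The main obstacle I anticipate is the measure-theoretic bookkeeping in the a.e.\ statement rather than the algebra: convex functions are differentiable only Lebesgue-a.e., and the conjugacy identity $\nabla f^*(\nabla f(x)) = x$ requires differentiability of $f^*$ \emph{at the image point} $\nabla f(x)$, not at $x$. I would handle this by arguing that the $\nu$-null set of non-differentiability of $f^*$ pulls back, under the measure-preserving map $T_\mu^\nu = \nabla f$, to a $\mu$-null set, using precisely that $\nabla f_\#\mu = \nu$ and $\nu \ll \mathscr{L}^d$; combined with the $\mu$-a.e.\ differentiability of $f$ itself, the intersection of the good sets is $\mu$-full, which is exactly what the claim requires. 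Since the cited source (Ambrosio--Gigli--Savar\'e, Rmk.~6.2.11) records this as a known consequence of Brenier's theorem and convex duality, the write-up can invoke the polar factorization and the subgradient inversion $y \in \partial f(x) \iff x \in \partial f^*(y)$ as the two load-bearing facts, and spend its effort on the null-set transfer.
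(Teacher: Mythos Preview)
The paper does not supply its own proof of this lemma: it is stated with a bare citation to \cite[Rmk.~6.2.11]{ambrosio2005gradient} and used as a black box. Your proposal is therefore not being compared against anything in the paper; it is a self-contained justification of the cited fact.

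That said, your outline is correct and is precisely the standard argument behind the cited remark. The two load-bearing ingredients you identify---Brenier's polar factorization giving $T_\mu^\nu = \nabla f$, $T_\nu^\mu = \nabla f^*$ (up to constants) via uniqueness of the optimal plan, and the convex-analytic inversion $y \in \partial f(x) \iff x \in \partial f^*(y)$---are exactly what drive the result in Ambrosio--Gigli--Savar\'e. Your treatment of the null-set transfer (pulling back the $\nu$-null non-differentiability set of $f^*$ through $\nabla f$ to a $\mu$-null set because $\nabla f_\#\mu = \nu$) is the right way to close the a.e.\ gap and is often the point glossed over in informal presentations. Nothing is missing.
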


\begin{theorem}[Characterization of Fr\'echet subdifferential for geodesically convex functions]\cite[Section 10.1]{ambrosio2005gradient}
\label{thm:characFrechet}
Suppose $\phi: \mathcal{P}_2(X) \to \mathbb{R}\cup \{+\infty\}$ is proper, l.s.c, convex on geodesics. Let {$\mu \in \dom(\partial \phi) \cap \mathcal{P}_{2,\abs}(X)$}, then a vector $\xi \in L^2(X,X,\mu)$ belongs to the Fr\'echet subdifferential of $\phi$ at $\mu$ if and only if
\begin{align*}
    \phi(\nu) - \phi(\mu) \geq \int_X{\langle \xi(x),T_{\mu}^{\nu}(x)-x \rangle} d\mu(x) \quad \forall \nu \in \dom(\phi).
\end{align*}
\end{theorem}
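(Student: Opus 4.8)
The plan is to prove the two implications of the ``if and only if'' separately. The backward implication is essentially formal; all the real content is in the forward implication, where I will pass from the infinitesimal defining inequality of $\partial_F^-\phi(\mu)$ to the global inequality by testing along the geodesic issuing from $\mu$.

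\emph{Direction ``$\Leftarrow$''.} Suppose the displayed inequality holds for all $\nu\in\dom(\phi)$; note $\mu\in\dom(\partial\phi)\subseteq\dom(\phi)$, so $\phi(\mu)<+\infty$. Fix $\nu\in\mathcal{P}_2(X)$. If $\nu\notin\dom(\phi)$ the defining inequality of $\partial_F^-\phi(\mu)$ is trivial since its left-hand side is $+\infty$, so assume $\nu\in\dom(\phi)$. Because $\mu\in\mathcal{P}_{2,\abs}(X)$, Brenier's theorem (Sect.~\ref{subsect:optimaltransport}) gives $\Gamma_o(\mu,\nu)=\{(I,T_\mu^\nu)_\#\mu\}$, hence $\sup_{\gamma\in\Gamma_o(\mu,\nu)}\int_{X\times X}\langle\xi(x),y-x\rangle\,d\gamma(x,y)=\int_X\langle\xi(x),T_\mu^\nu(x)-x\rangle\,d\mu(x)$. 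The hypothesis then says exactly $\phi(\nu)-\phi(\mu)\geq\sup_{\gamma}\int\langle\xi,y-x\rangle\,d\gamma$, so the defining inequality of $\partial_F^-\phi(\mu)$ (Sect.~\ref{subsect:subdiffcalwass}) holds with the residual $o(W_2(\mu,\nu))$ term taken identically zero, and therefore $\xi\in\partial_F^-\phi(\mu)$.

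\emph{Direction ``$\Rightarrow$''.} Assume $\xi\in\partial_F^-\phi(\mu)$ and fix $\nu\in\dom(\phi)$. Since $\mu\in\mathcal{P}_{2,\abs}(X)$, write $T_\mu^\nu=\nabla\varphi$ with $\varphi$ convex and consider the McCann interpolation $\mu_t:=((1-t)I+t\,T_\mu^\nu)_\#\mu$ for $t\in[0,1]$, which is the unique constant-speed geodesic from $\mu_0=\mu$ to $\mu_1=\nu$. I will use two standard structural facts: (i) for $t\in[0,1)$ the map $(1-t)I+t\,T_\mu^\nu=\nabla\big((1-t)\tfrac12\Vert\cdot\Vert^2+t\varphi\big)$ is the gradient of a convex function, hence by Brenier it is the optimal map, i.e.\ $T_\mu^{\mu_t}=(1-t)I+t\,T_\mu^\nu$ and $\Gamma_o(\mu,\mu_t)=\{(I,T_\mu^{\mu_t})_\#\mu\}$; and (ii) consequently $W_2(\mu,\mu_t)=t\,W_2(\mu,\nu)$. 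By the hypothesis that $\phi$ is convex along geodesics, applied along $t\mapsto\mu_t$, we get $\phi(\mu_t)\leq(1-t)\phi(\mu)+t\phi(\nu)$, so $\mu_t\in\dom(\phi)$ for $t<1$ and $\phi(\mu_t)-\phi(\mu)\leq t\big(\phi(\nu)-\phi(\mu)\big)$. On the other hand, applying the defining inequality of $\partial_F^-\phi(\mu)$ with $\mu_t$ in place of $\nu$ and using (i)--(ii),
\begin{align*}
\phi(\mu_t)-\phi(\mu) &\geq \int_X\langle\xi(x),T_\mu^{\mu_t}(x)-x\rangle\,d\mu(x)+o\big(W_2(\mu,\mu_t)\big)\\
&= t\int_X\langle\xi(x),T_\mu^{\nu}(x)-x\rangle\,d\mu(x)+o\big(t\,W_2(\mu,\nu)\big).
\end{align*}
Chaining the upper and lower bounds, dividing by $t>0$, and letting $t\to0^+$ — so that $o(t\,W_2(\mu,\nu))/t\to0$ because $W_2(\mu,\nu)$ is a fixed constant — yields $\phi(\nu)-\phi(\mu)\geq\int_X\langle\xi(x),T_\mu^\nu(x)-x\rangle\,d\mu(x)$, the claimed global inequality.

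\emph{Main obstacle.} The delicate point is not the limiting computation but the geometric input (i)--(ii): that a geodesic emanating from an absolutely continuous measure has the explicit pushforward form, that the interpolating maps $(1-t)I+tT_\mu^\nu$ remain \emph{optimal} (which collapses the $\sup$ over $\Gamma_o$ to a single integral and lets the subdifferential inequality be applied cleanly at $\mu_t$), and that the interpolation has constant speed $W_2(\mu,\mu_t)=t\,W_2(\mu,\nu)$. These are consequences of the Brenier/McCann theory already invoked in Sect.~\ref{subsect:optimaltransport}; once they are in place, the bookkeeping $o(W_2(\mu,\mu_t))=o(t\,W_2(\mu,\nu))=o(t)$ as $t\to0$ (with $\nu$ held fixed) is exactly what legitimizes the local-to-global passage.
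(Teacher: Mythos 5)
Your proof is correct. The paper does not prove this statement itself---it is quoted from \cite[Sect.\ 10.1]{ambrosio2005gradient}---and your argument is essentially the standard one given there: the backward implication follows because absolute continuity of $\mu$ collapses $\Gamma_o(\mu,\nu)$ to the single Brenier plan so the global inequality implies the Fr\'echet inequality with vanishing remainder, and the forward implication passes from local to global by testing the subdifferential inequality along the displacement interpolation $\mu_t=((1-t)I+tT_\mu^\nu)_\#\mu$, using that $(1-t)I+tT_\mu^\nu$ is itself a gradient of a convex function (hence optimal, with $W_2(\mu,\mu_t)=tW_2(\mu,\nu)$) and that geodesic convexity bounds $\phi(\mu_t)-\phi(\mu)$ above by $t(\phi(\nu)-\phi(\mu))$ before letting $t\to0^+$. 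All the geometric inputs you flag as the main obstacle are indeed the standard Brenier/McCann facts already recalled in Sect.\ \ref{subsect:optimaltransport}, so there is no gap.
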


\begin{lemma}
    \label{gradquadraticgr}
    Let $H: X \to \mathbb{R}$ be a convex function having quadratic growth and $\xi$ be a measurable selector of $\partial H$, i.e., $\xi(x) \in \partial H(x)$ for all $x \in X$. Then, for all $\mu \in \mathcal{P}_2(X)$,
    \begin{align}
\label{eq:Squadratic}
    \int_X{\Vert \xi(x) \Vert^2} d\mu(x) < + \infty.
\end{align}
In other words, $\xi \in L^2(X,X,\mu)$ for all $\mu \in \mathcal{P}_2(X)$.
\end{lemma}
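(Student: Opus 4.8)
The plan is to reduce \eqref{eq:Squadratic} to a pointwise estimate: I will show $\|\xi(x)\|^2 \le C(\|x\|^2 + 1)$ for every $x \in X$, with $C$ depending only on the quadratic-growth constant $a$ of $H$. Granting this, \eqref{eq:Squadratic} is immediate, since $\int_X (\|x\|^2 + 1)\, d\mu(x) = \mathfrak m_2(\mu) + 1 < +\infty$ for every $\mu \in \mathcal{P}_2(X)$; the hypothesis that $\xi$ is (Borel) measurable guarantees that $\|\xi(\cdot)\|^2$ is measurable, so $\int_X \|\xi(x)\|^2\, d\mu(x)$ is a well-defined element of $[0,+\infty]$ which the pointwise bound forces to be finite. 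Note also that the selector $\xi$ genuinely makes sense because a finite convex function on $\mathbb{R}^d$ is subdifferentiable at every point.

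For the pointwise bound I would use only the subgradient inequality for the convex function $H$, namely $H(y) \ge H(x) + \langle \xi(x), y - x\rangle$ for all $x, y \in X$. Fix $x$ with $\xi(x) \ne 0$, put $v = \xi(x)/\|\xi(x)\|$, and evaluate the inequality at $y = x + t v$ with $t > 0$ arbitrary; this gives $t\,\|\xi(x)\| \le H(x + t v) - H(x)$. Applying quadratic growth $|H(z)| \le a(\|z\|^2 + 1)$ together with $\|x + tv\| \le \|x\| + t$ yields $H(x+tv) - H(x) \le a\big(2\|x\|^2 + 2t\|x\| + t^2 + 2\big)$, hence $\|\xi(x)\| \le a\big((2\|x\|^2 + 2)/t + 2\|x\| + t\big)$.

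The only step that requires a moment's thought is the choice of $t$. A naive fixed choice such as $t = 1$ gives merely $\|\xi(x)\| \le a'(\|x\|^2 + 1)$, hence the \emph{quartic} bound $\|\xi(x)\|^2 \le a''(\|x\|^2 + 1)^2$, which would need a finite fourth moment of $\mu$ --- more than we are allowed to assume. Letting $t$ scale with $x$, specifically $t = \sqrt{\|x\|^2 + 1}$ (which is, up to a constant, the minimizer of $(2\|x\|^2+2)/t + t$), yields $\|\xi(x)\| \le a\big(3\sqrt{\|x\|^2 + 1} + 2\|x\|\big) \le 5a\sqrt{\|x\|^2 + 1}$, so $\|\xi(x)\|^2 \le 25 a^2 (\|x\|^2 + 1)$; the case $\xi(x) = 0$ is trivial. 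Integrating this bound against $\mu \in \mathcal{P}_2(X)$ completes the proof. I expect this $t$-scaling to be the only non-mechanical point; everything else is the standard convex-analysis estimate bounding a subgradient by a difference quotient of $H$.
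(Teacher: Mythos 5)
Your proof is correct and follows essentially the same route as the paper's: both establish the pointwise bound $\Vert \xi(x)\Vert^2 \leq C(\Vert x \Vert^2+1)$ by evaluating the subgradient inequality at a point displaced from $x$ in the direction of $\xi(x)$ and invoking the quadratic growth of $H$. The only difference is cosmetic --- the paper takes $y = x+\epsilon\xi(x)$ with a fixed small $\epsilon < 1/(2a)$ and absorbs the resulting $\epsilon^2\Vert\xi(x)\Vert^2$ term, whereas you take a unit-direction step of length $t=\sqrt{\Vert x\Vert^2+1}$ and optimize over $t$; both yield the same conclusion.
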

\begin{proof}
    
Since $\xi(x) \in \partial H(x)$, by tangent inequality for convex functions, we have
\begin{align*}
    H(y) \geq H(x) + \langle \xi(x),y-x \rangle, \quad \forall y \in X.
\end{align*}
By picking $y = x + \epsilon \xi(x)$ for some $\epsilon >0$, we get
\begin{align}
\label{Hxi1}
    H(x+\epsilon \xi(x)) - H(x) \geq \langle \xi(x), \epsilon \xi(x) \rangle = \epsilon \Vert \xi(x) \Vert^2.
\end{align}
Since $H$ has quadratic growth, for some $a>0$,
\begin{align*}
    H(x+\epsilon \xi(x)) - H(x) &\leq \vert H(x+\epsilon \xi(x)) \vert +  \vert H(x) \vert\\
    &\leq a\left( \Vert x + \epsilon \xi(x) \Vert^2 + 1\right) + a(\Vert x\Vert^2+1)\\
    &\leq 2a + a \Vert x \Vert^2 + a(\Vert x \Vert + \epsilon \Vert \xi(x) \Vert)^2\\
    &\leq 2a + a \Vert x \Vert^2 + 2a(\Vert x \Vert^2 + \epsilon^2 \Vert \xi(x) \Vert^2)\\
    &= 2a + 3a \Vert x \Vert^2 + 2a \epsilon^2 \Vert \xi(x) \Vert^2.
\end{align*}
Combining with \eqref{Hxi1}, it holds
\begin{align*}
    \epsilon(1-2a\epsilon) \Vert \xi(x) \Vert^2 \leq 2a + 3a \Vert x \Vert^2.
\end{align*}
By choosing $0<\epsilon < 1/(2a)$, we obtain
\begin{align*}
    \Vert \xi(x) \Vert^2 \leq \dfrac{2a}{\epsilon(1-2a\epsilon)} + \dfrac{3a}{\epsilon(1-2a\epsilon)} \Vert x \Vert^2.
\end{align*}
Therefore, $\Vert \xi(x) \Vert^2$ has quadratic growth and - as a consequence - \eqref{eq:Squadratic} holds for any $\mu \in \mathcal{P}_2(X).$
\end{proof}

\begin{lemma}
\label{lem:Frechetlocal}
    Let $\phi: \mathcal{P}_2(X) \to \mathbb{R} \cup \{+\infty\}$ be a proper function. Let $\mu^*$ be a local minimizer of $\phi$, then $\mu^*$ is a Fr\'echet stationary point of $\phi$.
\end{lemma}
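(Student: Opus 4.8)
The claim is that a local minimizer $\mu^*$ of a proper functional $\phi$ on $\mathcal{P}_2(X)$ is a Fr\'echet stationary point, i.e. $0 \in \partial_F^- \phi(\mu^*)$. By the definition of Fr\'echet subdifferential recalled in Section \ref{subsect:subdiffcalwass}, it suffices to show that the zero map $\xi \equiv 0 \in L^2(X,X,\mu^*)$ satisfies
\[
\phi(\nu) - \phi(\mu^*) \geq \sup_{\gamma \in \Gamma_o(\mu^*,\nu)} \int_{X\times X} \langle 0, y-x\rangle\, d\gamma(x,y) + o(W_2(\mu^*,\nu)) = o(W_2(\mu^*,\nu))
\]
for all $\nu \in \mathcal{P}_2(X)$, where the integral term vanishes identically. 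So the statement reduces to: $\phi(\nu) - \phi(\mu^*) \geq o(W_2(\mu^*,\nu))$ as $W_2(\mu^*,\nu) \to 0$.

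The plan is to argue directly from local optimality. Since $\mu^*$ is a local minimizer, there exists $r>0$ such that $\phi(\mu^*) \leq \phi(\nu)$ for every $\nu \in B(\mu^*,r)$, hence $\phi(\nu) - \phi(\mu^*) \geq 0$ for all such $\nu$. The little-$o$ condition is a statement about the behavior as $W_2(\mu^*,\nu) \to 0$, i.e. it only needs to hold for $\nu$ in an arbitrarily small neighborhood of $\mu^*$; choosing the neighborhood inside $B(\mu^*,r)$, the left-hand side is nonnegative there, and a nonnegative quantity trivially dominates $o(W_2(\mu^*,\nu))$ in the required sense. More precisely, one checks that the function $g(\nu) := \phi(\nu) - \phi(\mu^*)$ satisfies $\liminf$-type lower bound $g(\nu)/W_2(\mu^*,\nu) \geq 0 \geq \limsup_{s\to 0} o(s)/s$, which is exactly what the inequality with remainder $o(W_2(\mu^*,\nu))$ encodes. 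Thus $0 \in \partial_F^-\phi(\mu^*)$.

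I do not anticipate a serious obstacle here; the only point requiring a little care is the precise reading of the $o(W_2(\mu^*,\nu))$ convention in the definition of $\partial_F^-$, namely that the inequality is asymptotic and need only be verified for $\nu$ near $\mu^*$ (equivalently, that one may add to the right-hand side any term that is $o(W_2)$, and nonnegativity of the left-hand side near $\mu^*$ makes such a term harmless). One should also note $\mu^* \in \dom(\phi)$, which holds because $\phi$ is proper and $\mu^*$ is a (finite-valued) local minimizer, so $\partial_F^-\phi(\mu^*)$ is meaningfully defined. No additional regularity (geodesic convexity, differentiability, lower semicontinuity) of $\phi$ is needed. The whole argument is a one-line consequence of the definitions once the asymptotic nature of the remainder term is made explicit.
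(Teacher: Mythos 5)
Your proof is correct and follows essentially the same route as the paper: both reduce the claim to checking that the zero map satisfies the Fr\'echet subdifferential inequality, which amounts to $\liminf_{\nu \xrightarrow{\wass} \mu^*} \bigl(\phi(\nu)-\phi(\mu^*)\bigr)/W_2(\mu^*,\nu) \geq 0$, and this follows immediately from local minimality. Your added remarks on the asymptotic reading of the $o(W_2)$ remainder and on $\mu^* \in \dom(\phi)$ are accurate but not points the paper felt the need to spell out.
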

\begin{proof}
    There exists $r>0$ such that $\phi(\mu^*) \leq \phi(\mu)$ for all $\mu \in \mathcal{P}_2(X): W_2(\mu,\mu^*) < r.$ It follows that
    \begin{align*}
        \liminf_{\mu \xrightarrow{\wass} \mu^*}{\dfrac{\phi(\mu)-\phi(\mu^*)}{W_2(\mu,\mu^*)}} \geq 0,
    \end{align*}
    so $0 \in \partial_F^- \phi(\mu^*)$, or $\mu^*$ is a Fr\'echet stationary point of $\phi$.
\end{proof}

\begin{lemma}
    \label{lem:suffiglobal}
    Let $\phi: \mathcal{P}_2(X) \to \mathbb{R}\cup\{+\infty\}$ be a proper, l.s.c, geodesically convex function. Suppose that $\mu^* \in \mathcal{P}_{2,\abs}(X)$ is a Fr\'echet stationary point of $\phi$. Then, $\mu^*$ is a global minimizer of $\phi$.
\end{lemma}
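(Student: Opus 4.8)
The plan is to show that Fréchet stationarity of a proper, l.s.c., geodesically convex $\phi$ at an absolutely continuous point $\mu^*$ forces global minimality, by exploiting the sharpened characterization of the Fréchet subdifferential available for geodesically convex functions (Theorem \ref{thm:characFrechet} in the excerpt), which lets us drop the little-$o$ term. First I would invoke the hypothesis $0 \in \partial_F^- \phi(\mu^*)$, i.e. $\xi = 0 \in \partial \phi(\mu^*)$ in the simplified notation for geodesically convex functions. Since $\mu^* \in \mathcal{P}_{2,\abs}(X)$ and $\mu^* \in \dom(\partial \phi)$, Theorem \ref{thm:characFrechet} applies and yields, for the particular choice $\xi = 0$,
\begin{align*}
\phi(\nu) - \phi(\mu^*) \geq \int_X \langle 0, T_{\mu^*}^{\nu}(x) - x \rangle \, d\mu^*(x) = 0, \quad \forall \nu \in \dom(\phi).
\end{align*}
This already gives $\phi(\mu^*) \leq \phi(\nu)$ for all $\nu \in \dom(\phi)$, and trivially for $\nu \notin \dom(\phi)$ since then $\phi(\nu) = +\infty$. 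Hence $\mu^*$ is a global minimizer.

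The only subtlety — and the place I would be careful — is checking that the hypotheses of Theorem \ref{thm:characFrechet} are genuinely met: the theorem requires $\mu^* \in \dom(\partial\phi) \cap \mathcal{P}_{2,\abs}(X)$. The absolute continuity is assumed directly in the statement of Lemma \ref{lem:suffiglobal}. For $\mu^* \in \dom(\partial\phi)$: since $\mu^*$ is assumed to be a Fréchet stationary point, we have $0 \in \partial_F^-\phi(\mu^*) \neq \emptyset$, and for geodesically convex $\phi$ the Fréchet subdifferential $\partial_F^-$ is exactly what we denote $\partial$, so $\mu^* \in \dom(\partial\phi)$ by definition. (Implicitly this also requires $\mu^* \in \dom(\phi)$, which is part of the meaning of Fréchet subdifferentiability — the definition in Section \ref{subsect:subdiffcalwass} only assigns $\partial_F^-\phi(\mu)$ for $\mu \in \dom(\phi)$.) So all the ingredients line up and no real obstacle remains; the proof is essentially a one-line application of the characterization theorem once the bookkeeping on domains is in place.

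I do not anticipate any hard part here — this is the standard ``stationary point of a convex function is a global minimizer'' argument transplanted to the Wasserstein setting, where the role of the convex subgradient inequality is played by Theorem \ref{thm:characFrechet}. If one wanted to avoid citing that theorem, an alternative would be to argue along the geodesic $t \mapsto ((1-t)I + t\,T_{\mu^*}^{\nu})_{\#}\mu^*$ from $\mu^*$ to $\nu$: geodesic convexity gives $\phi(\mu_t) \leq (1-t)\phi(\mu^*) + t\phi(\nu)$, so $\bigl(\phi(\mu_t) - \phi(\mu^*)\bigr)/t \leq \phi(\nu) - \phi(\mu^*)$, and letting $t \to 0^+$ while using $W_2(\mu_t, \mu^*) = t\,W_2(\mu^*,\nu)$ together with $0 \in \partial_F^-\phi(\mu^*)$ (so the left side has liminf $\geq 0$) again forces $\phi(\nu) \geq \phi(\mu^*)$. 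Either route works; I would present the first since Theorem \ref{thm:characFrechet} is already stated and makes the argument immediate.
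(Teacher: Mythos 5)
Your proof is correct and follows exactly the paper's own route: take $\xi = 0 \in \partial\phi(\mu^*)$ and apply the subdifferential characterization of Theorem \ref{thm:characFrechet} to conclude $\phi(\nu) \geq \phi(\mu^*)$ for all $\nu \in \dom(\phi)$. Your extra care in verifying the domain hypotheses, and the alternative geodesic argument, go slightly beyond the paper's two-line proof but do not change the approach.
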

\begin{proof}
    By definition of Fr\'echet stationarity, $0 \in \partial \phi(\mu^*)$. By characterization of subdifferential of geodesically convex functions (Thm. \ref{thm:characFrechet}), it holds $\phi(\mu) \geq \phi(\mu^*)$ for all $\mu \in \dom(\phi)$, or $\mu^*$ is a global minimizer of $\phi$.
\end{proof}

\begin{lemma}
    \label{lem:critlocal}
Under Assumption \ref{assum_main}, let $\mu^* \in \dom(\mathcal{F})$ be a local minimizer of $\mathcal{F}$, then $\mu^*$ is a critical point of $\mathcal{F}$, i.e., $\partial(\mathscr{H}+\mathcal{E}_G)(\mu^*) \cap \partial \mathcal{E}_H(\mu^*) \neq \emptyset.$
\end{lemma}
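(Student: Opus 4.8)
The plan is to derive the critical‑point condition from local optimality by a perturbation argument that exploits the convexity of $H$. Since $\mu^* \in \dom(\mathcal{F})$ and Assumption~\ref{assum_main}(iii) forces $\dom(\mathcal{F}) \subset \mathcal{P}_{2,\abs}(X)$, we have $\mu^* \in \mathcal{P}_{2,\abs}(X)$, so that for any $\nu \in \mathcal{P}_2(X)$ there is a well‑defined optimal map $T_{\mu^*}^{\nu} = \nabla\varphi$ (Brenier, Sect.~\ref{subsect:optimaltransport}). First I would pick a measurable selector $S$ of $\partial H$ (Assumption~\ref{assump:measurable}‑type existence); by Lem.~\ref{gradquadraticgr}, $S \in L^2(X,X,\mu^*)$, hence $S \in \tang_{\mu^*}\mathcal{P}_2(X)$ up to its parallel component, and I would show $S \in \partial\mathcal{E}_H(\mu^*)$: indeed $\mathcal{E}_H$ is geodesically convex (it is convex along generalized geodesics by McCann's condition for potential energies, Assumption~\ref{assum_main}(ii)), and the pointwise tangent inequality $H(y) \ge H(x) + \langle S(x), y-x\rangle$ integrated against any transport plan gives exactly the subdifferential inequality of Thm.~\ref{thm:characFrechet}.

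Next I would use local minimality of $\mu^*$ for $\mathcal{F} = (\mathscr{H}+\mathcal{E}_G) - \mathcal{E}_H$. Write $\Phi := \mathscr{H}+\mathcal{E}_G$, which is proper, l.s.c., and convex along generalized geodesics, hence geodesically convex. For an arbitrary $\nu \in \dom(\Phi)$ with $W_2(\nu,\mu^*)$ small, consider the generalized geodesic (actually the geodesic, since $\mu^* \in \mathcal{P}_{2,\abs}$) $\mu_t := ((1-t)I + t\,T_{\mu^*}^{\nu})_{\#}\mu^*$. Local optimality gives $\mathcal{F}(\mu_t) \ge \mathcal{F}(\mu^*)$ for small $t>0$, i.e.
\begin{align*}
\Phi(\mu_t) - \Phi(\mu^*) \ge \mathcal{E}_H(\mu_t) - \mathcal{E}_H(\mu^*).
\end{align*}
By geodesic convexity of $\Phi$, $\Phi(\mu_t) - \Phi(\mu^*) \le t(\Phi(\nu) - \Phi(\mu^*))$; for the right‑hand side, using $H \in$ convex with the chosen selector $S$ and the explicit form $\mu_t$, one shows $\liminf_{t\to 0^+}\frac{1}{t}(\mathcal{E}_H(\mu_t) - \mathcal{E}_H(\mu^*)) \ge \int_X \langle S(x), T_{\mu^*}^{\nu}(x) - x\rangle\, d\mu^*(x)$ (this is the directional derivative lower bound of the convex functional $\mathcal{E}_H$ along the geodesic, and it can be obtained from the monotone difference quotients of the convex functions $t\mapsto H((1-t)x + tT_{\mu^*}^{\nu}(x))$ together with a quadratic‑growth domination to pass the limit inside the integral via Fatou/monotone convergence). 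Dividing by $t$ and letting $t \to 0^+$ yields
\begin{align*}
\Phi(\nu) - \Phi(\mu^*) \ge \int_X \langle S(x),\, T_{\mu^*}^{\nu}(x) - x\rangle\, d\mu^*(x) \qquad \forall\, \nu \in \dom(\Phi).
\end{align*}
By the characterization in Thm.~\ref{thm:characFrechet}, this says precisely $S \in \partial\Phi(\mu^*) = \partial(\mathscr{H}+\mathcal{E}_G)(\mu^*)$. Since we also have $S \in \partial\mathcal{E}_H(\mu^*)$, the intersection is nonempty, which is criticality.

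The main obstacle I anticipate is the interchange of limit and integral in the two one‑sided directional derivative estimates, and making sure the same selector $S$ works for both inclusions simultaneously. Concretely: for the $\mathcal{E}_H$ side I need $\frac{1}{t}(\mathcal{E}_H(\mu_t)-\mathcal{E}_H(\mu^*)) \to \int \langle S, T_{\mu^*}^{\nu} - I\rangle d\mu^*$ (or at least a liminf bound from below), which requires a uniform integrable majorant for the difference quotients $\frac{1}{t}(H((1-t)x+tT_{\mu^*}^{\nu}(x)) - H(x))$; monotonicity of convex difference quotients plus the quadratic‑growth bound on $H$ and $\mu^*, \nu \in \mathcal{P}_2$ supply this. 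A secondary subtlety is that $\dom(\Phi)$ may be strictly smaller than $\mathcal{P}_2(X)$, but the subdifferential inequality of Thm.~\ref{thm:characFrechet} is only required over $\dom(\Phi) = \dom(\mathcal{F})$, so testing against $\nu \in \dom(\Phi)$ suffices and the restriction $\mu^* \in \mathcal{P}_{2,\abs}(X)$ needed for Thm.~\ref{thm:characFrechet} is automatic. Finally, I would invoke Lem.~\ref{lem:Frechetlocal} only implicitly; the argument above is self‑contained and does not need a sum rule, though one could alternatively package it via Lem.~\ref{lem:sumrule}‑style reasoning if $\mathcal{E}_H$ is Wasserstein differentiable.
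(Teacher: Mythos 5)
Your proposal is correct and follows essentially the same route as the paper: take a Borel measurable selector $S$ of $\partial H$, note $S \in L^2(X,X,\mu^*)$ by Lem.~\ref{gradquadraticgr} and $S \in \partial \mathcal{E}_H(\mu^*)$, then combine the subgradient inequality for $\mathcal{E}_H$ with local optimality of $\mu^*$ to conclude $S \in \partial(\mathscr{H}+\mathcal{E}_G)(\mu^*)$ via Thm.~\ref{thm:characFrechet}, so the intersection is nonempty. The only real difference is mechanical --- the paper obtains the subgradient inequality for $\mathscr{H}+\mathcal{E}_G$ only on the ball $B(\mu^*,r)$ and appeals to geodesic convexity to globalize, whereas you globalize explicitly by interpolating along $\mu_t$ and dividing by $t$; note that the difference quotients are bounded below by $\langle S(x), T_{\mu^*}^{\nu}(x)-x\rangle$ pointwise for \emph{every} $t$, so no Fatou/monotone-convergence step is actually needed, and the convexity of $\mathcal{E}_H$ along generalized geodesics follows simply from convexity of $H$ (not from McCann's condition, which concerns internal energies).
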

\begin{proof}
    Since $\mu^*$ is a local minimizer of $\mathcal{F}$, there exists $r>0$ such that 
    \begin{equation}
    \label{eq:Fineqlocal}
        \mathcal{F}(\mu^*) \leq \mathcal{F}(\mu), \quad \forall \mu \in \mathcal{P}_2(X): W_2(\mu,\mu^*)<r.
    \end{equation}
   Let $\xi$ be a measurable selector of $\partial H$. Thanks to Lemma \ref{gradquadraticgr}, $\xi \in L^2(X,X,\mu^*)$. 
    According to \cite[Prop. 4.13]{ambrosio2006gradient}, $\xi \in \partial \mathcal{E}_H(\mu^*)$. It follows from Thm. \ref{thm:characFrechet} that
    \begin{align}
    \label{eq:Hineqlocal}
        \mathcal{E}_H(\mu) \geq \mathcal{E}_H(\mu^*) + \int_{X}{\langle \xi(x),T_{\mu^*}^{\mu}(x)-x\rangle}d\mu^*(x), \quad \forall \mu \in \mathcal{P}_2(X).
        \end{align}
    From \eqref{eq:Fineqlocal} and \eqref{eq:Hineqlocal}, for $\mu \in B(\mu^*,r)$,
    \begin{align*}
        \mathscr{H}(\mu) + \mathcal{E}_G(\mu) \geq \mathscr{H}(\mu^*)  +\mathcal{E}_G(\mu^*) + \int_{X}{\langle \xi(x),T_{\mu^*}^{\mu}(x)-x\rangle}d\mu^*(x).
    \end{align*}
    Therefore, $\xi \in \partial(\mathscr{H}+\mathcal{E}_G)(\mu^*)$ since $\mathscr{H}+\mathcal{E}_G$ is geodesically convex. It follows that $\mu^*$ is a critical point of $\mathcal{F}.$
\end{proof}

\begin{lemma}
\label{lem:sumrule}
    Let $\mathcal{U}, \mathcal{V}: \mathcal{P}_2(X) \to \mathbb{R} {\cup \{+\infty\}}$. Assume that $\mathcal{U}$ and $\mathcal{V}$ are Fr\'echet subdifferentiable at $\mu$, the following statements hold
    \begin{itemize}
        \item[a.] $\partial_F^- (\mathcal{U} + \mathcal{V})(\mu) \supset \partial_F^- \mathcal{U}(\mu) + \partial_F^- \mathcal{V}(\mu).$ 
        \item[b.] If $\mathcal{V}$ is Wasserstein differentiable of $\mu$, then 
            \begin{align}
            \label{eq:sumrule}
        \partial_F^- \mathscr (\mathcal{U} + \mathcal{V})(\mu) = \partial_F^- \mathcal{U}(\mu) + \nabla_W \mathcal{V}(\mu).
    \end{align}
    \end{itemize}
\end{lemma}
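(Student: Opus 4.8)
The plan is to derive both parts directly from the definition of the Fr\'echet subdifferential in Section~\ref{subsect:subdiffcalwass}; the only genuinely new ingredient is careful bookkeeping of the suprema over optimal plans. For part (a), I would take $\xi_{\mathcal{U}} \in \partial_F^- \mathcal{U}(\mu)$ and $\xi_{\mathcal{V}} \in \partial_F^-\mathcal{V}(\mu)$ and write down the defining inequality for each: a lower bound on $\mathcal{U}(\nu)-\mathcal{U}(\mu)$ (resp. $\mathcal{V}(\nu)-\mathcal{V}(\mu)$) by $\sup_{\gamma\in\Gamma_o(\mu,\nu)}\int_{X\times X}\langle\xi_{\mathcal{U}}(x),y-x\rangle\,d\gamma$ plus an $o(W_2(\mu,\nu))$ term. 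Adding the two inequalities and using that for any functionals of $\gamma$ one has $\sup_\gamma A(\gamma)+\sup_\gamma B(\gamma)\ge \sup_\gamma\big(A(\gamma)+B(\gamma)\big)$, together with the fact that a sum of two $o(W_2(\mu,\nu))$ terms is again $o(W_2(\mu,\nu))$, gives $(\mathcal{U}+\mathcal{V})(\nu)-(\mathcal{U}+\mathcal{V})(\mu) \ge \sup_{\gamma\in\Gamma_o(\mu,\nu)}\int_{X\times X}\langle(\xi_{\mathcal{U}}+\xi_{\mathcal{V}})(x),y-x\rangle\,d\gamma + o(W_2(\mu,\nu))$, i.e. $\xi_{\mathcal{U}}+\xi_{\mathcal{V}}\in\partial_F^-(\mathcal{U}+\mathcal{V})(\mu)$. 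The only point worth a sentence is that for $\nu\notin\dom(\mathcal{U}+\mathcal{V})$ the target inequality is vacuous, so one argues on $\dom(\mathcal{U}+\mathcal{V})$, where (since both $\mathcal{U}(\mu),\mathcal{V}(\mu)$ are finite, $\mu$ being in both domains) no $\infty-\infty$ ambiguity arises.

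For part (b), the inclusion ``$\supseteq$'' is immediate from (a) and the fact that Wasserstein differentiability gives $\nabla_W\mathcal{V}(\mu)\in\partial_F^-\mathcal{V}(\mu)$. For ``$\subseteq$'', I would first record the dual characterization of the superdifferential: $\eta\in\partial_F^+\phi(\mu)$ if and only if $\phi(\nu)-\phi(\mu)\le \inf_{\gamma\in\Gamma_o(\mu,\nu)}\int_{X\times X}\langle\eta(x),y-x\rangle\,d\gamma + o(W_2(\mu,\nu))$ for all $\nu$; this follows by unwinding $\partial_F^+\phi(\mu)=-\partial_F^-(-\phi)(\mu)$ and using $-\sup_\gamma(-C(\gamma))=\inf_\gamma C(\gamma)$. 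Since $\mathcal{V}$ is Wasserstein differentiable at $\mu$, $\nabla_W\mathcal{V}(\mu)$ lies both in $\partial_F^-\mathcal{V}(\mu)$ and in $\partial_F^+\mathcal{V}(\mu)$; moreover the defining identity for $\nabla_W\mathcal{V}(\mu)$ has a real-valued right-hand side for every $\nu$, so $\mathcal{V}$ is in fact finite-valued everywhere, which removes all domain headaches in the subtraction below.

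Now take $\xi\in\partial_F^-(\mathcal{U}+\mathcal{V})(\mu)$. For $\nu\in\dom(\mathcal{U})$ write $\mathcal{U}(\nu)-\mathcal{U}(\mu)=\big((\mathcal{U}+\mathcal{V})(\nu)-(\mathcal{U}+\mathcal{V})(\mu)\big)-\big(\mathcal{V}(\nu)-\mathcal{V}(\mu)\big)$, bound the first bracket below by the $\sup_\gamma$ inequality for $\xi$ and the second bracket above by the $\inf_\gamma$ inequality for $\nabla_W\mathcal{V}(\mu)$, and combine via $\sup_\gamma A(\gamma)-\inf_\gamma B(\gamma)=\sup_\gamma A(\gamma)+\sup_\gamma(-B(\gamma))\ge\sup_\gamma\big(A(\gamma)-B(\gamma)\big)$. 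This yields $\mathcal{U}(\nu)-\mathcal{U}(\mu)\ge \sup_{\gamma\in\Gamma_o(\mu,\nu)}\int_{X\times X}\langle(\xi-\nabla_W\mathcal{V}(\mu))(x),y-x\rangle\,d\gamma + o(W_2(\mu,\nu))$; for $\nu\notin\dom(\mathcal{U})$ this holds trivially. Hence $\xi-\nabla_W\mathcal{V}(\mu)\in\partial_F^-\mathcal{U}(\mu)$, i.e. $\xi\in\partial_F^-\mathcal{U}(\mu)+\nabla_W\mathcal{V}(\mu)$, completing ``$\subseteq$'' and the proof of \eqref{eq:sumrule}.

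I expect the main (though still minor) obstacle to be the manipulation of $\sup$/$\inf$ over $\Gamma_o(\mu,\nu)$: one must resist hoping for equalities there, since in general only $\sup_\gamma+\sup_\gamma\ge\sup_\gamma(\cdot+\cdot)$ and $\sup_\gamma-\inf_\gamma\ge\sup_\gamma(\cdot-\cdot)$ hold, and one has to check that these are oriented the right way for each inclusion. The role of Wasserstein differentiability of $\mathcal{V}$ is exactly to supply the \emph{two-sided} estimate on $\mathcal{V}(\nu)-\mathcal{V}(\mu)$ (a $\sup_\gamma$ lower bound \emph{and} an $\inf_\gamma$ upper bound), which is what makes the subtraction step legitimate; with mere Fr\'echet subdifferentiability of $\mathcal{V}$ one only recovers ``$\supseteq$''. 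Everything else is routine $o(\cdot)$ calculus and the identity $\partial_F^+\phi=-\partial_F^-(-\phi)$.
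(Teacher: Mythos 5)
Your proposal is correct and follows essentially the same route as the paper: part (a) directly from the definition, the forward inclusion of (b) from (a), and the reverse inclusion by the subtraction trick, which the paper packages more compactly as applying part (a) to the pair $\mathcal{U}+\mathcal{V}$ and $-\mathcal{V}$ together with $-\nabla_W\mathcal{V}(\mu)\in\partial_F^-(-\mathcal{V})(\mu)$. Your explicit $\sup$/$\inf$ bookkeeping is just that application unrolled, so there is no substantive difference.
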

\begin{proof}
Item a. is trivial from the definition of Fr\'echet subdifferential. For item b., from item a., we first see that, 
\begin{align}
\label{eq:firstinclustion}
\partial_F^- (\mathcal{U} + \mathcal{V})(\mu) \supset \partial_F^- \mathcal{U}(\mu) + \nabla_W \mathcal{V}(\mu).
\end{align}
On the other hand, we apply item a. for $\mathcal{U} + \mathcal{V}$ and $-\mathcal{V}$ to obtain
\begin{align*}
    \partial_F^- \mathcal{U}(\mu) \supset \partial_F^- (\mathcal{U} + \mathcal{V})(\mu) + \partial_F^-(-\mathcal{V})(\mu).
\end{align*}
Since $-\nabla_W \mathcal{V}(\mu) \in \partial_F^-(-\mathcal{V})(\mu)$, it follows that
\begin{align}
\label{eq:secondinclusion}
    \partial_F^- \mathcal{U}(\mu) \supset \partial_F^- (\mathcal{U} + \mathcal{V})(\mu) -\nabla_W \mathcal{V}(\mu).
\end{align}
From \eqref{eq:firstinclustion} and \eqref{eq:secondinclusion}, we derive \eqref{eq:sumrule}.
\end{proof}

\begin{lemma}
\label{lem:abscont}
Let $\mu \ll \mathscr{L}^d$, and $g$ is a strongly convex function. Then $\nabla g_{\#} \mu \ll \mathscr{L}^d.$
\end{lemma}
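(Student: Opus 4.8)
The plan is to show that if $\mu \ll \mathscr{L}^d$ and $g$ is strongly convex, then $\nabla g_{\#}\mu \ll \mathscr{L}^d$ by proving the contrapositive-style implication: any Lebesgue-null set $N$ pulls back under the map $\nabla g$ (or more precisely, we must track the subgradient map) to a set of $\mu$-measure zero. The key structural fact I would invoke is that a strongly convex function $g$ with, say, strong convexity parameter $m>0$, has the property that its subdifferential map $\partial g$ is strongly monotone, hence injective, and its inverse $(\partial g)^{-1} = \partial g^*$ (where $g^*$ is the Legendre conjugate) is single-valued and Lipschitz continuous with constant $1/m$. This is the main workhorse: $g^*$ is differentiable everywhere with $\nabla g^*$ being $(1/m)$-Lipschitz.

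First I would reduce to the case where $g \in C^1$ so that $\nabla g$ is a genuine (single-valued) map; in general $g$ strongly convex need not be differentiable, but $\partial g$ admits a measurable selection $T$ with $T(x) \in \partial g(x)$, and since $\nabla g_{\#}\mu$ in the statement is presumably understood via such a selection (or $g$ is implicitly assumed differentiable $\mu$-a.e., which holds since convex functions are differentiable Lebesgue-a.e. and $\mu \ll \mathscr{L}^d$), I would work with $T = \nabla g$ defined $\mu$-a.e. Then to check absolute continuity of $\nu := \nabla g_{\#}\mu$, let $A \subset X$ with $\mathscr{L}^d(A) = 0$; I want $\nu(A) = 0$, i.e. $\mu((\nabla g)^{-1}(A)) = 0$. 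The point is that $(\nabla g)^{-1}(A) \subseteq \nabla g^*(A)$ up to a null set: if $\nabla g(x) = y \in A$, then $x \in \partial g^*(y)$, and since $g^*$ is differentiable with $x = \nabla g^*(y)$, we get $x \in \nabla g^*(A)$. Now $\nabla g^*$ is $(1/m)$-Lipschitz, so $\mathscr{L}^d(\nabla g^*(A)) \le (1/m)^d \mathscr{L}^d(A) = 0$ (a Lipschitz map with constant $L$ sends a set of $d$-dimensional measure zero to a set of measure zero, with the bound $L^d$ on outer measure). Hence $(\nabla g)^{-1}(A)$ is contained in a Lebesgue-null set, and since $\mu \ll \mathscr{L}^d$, we conclude $\mu((\nabla g)^{-1}(A)) = 0$, so $\nu(A) = 0$.

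The main obstacle I anticipate is handling the non-differentiability of $g$ carefully: strong convexity does not force $g$ to be everywhere differentiable, so one must be precise about what $\nabla g_{\#}\mu$ means and must argue that the subgradient selection only takes values on the full-measure differentiability set when pushed through $\mu$ — or, cleaner, argue directly with the conjugate $g^*$, which is automatically $C^1$ because $g$ is strongly convex (strong convexity of $g$ $\Leftrightarrow$ $g^*$ has Lipschitz gradient, a standard Fenchel duality fact, e.g. in Rockafellar or in the convex analysis reference \cite{mordukhovich2023easy} cited in the paper). The inclusion $(\partial g)^{-1}(y) = \{\nabla g^*(y)\}$ together with measurability of the selection and the Lipschitz image bound is what makes the whole argument go through; once that conjugate-function machinery is in place, the rest is the routine Lipschitz-image-of-null-set estimate. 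A secondary minor point is ensuring measurability of $(\nabla g)^{-1}(A)$ for Borel $A$, which follows from measurability of the selector $T$ (guaranteed here, e.g., by the same measurable-selection considerations the paper invokes for $S$), and then the null-set containment upgrades this to $\mu$-negligibility regardless.
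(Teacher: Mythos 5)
Your proof is correct, but it takes a genuinely different route from the paper's. The paper's argument goes through Aleksandrov's theorem: convex functions are twice differentiable $\mathscr{L}^d$-a.e., so on the full-measure set $\Omega$ where $\nabla^2 g$ exists, strong convexity gives injectivity of $\nabla g$ and $\vert\det \nabla^2 g\vert>0$, at which point \cite[Lem.~5.5.3]{ambrosio2005gradient} (an area-formula-type result on pushforwards by injective, approximately differentiable maps with nondegenerate differential) delivers the conclusion. You instead pass to the Fenchel conjugate: strong convexity of $g$ with modulus $m$ makes $g^*$ differentiable with $(1/m)$-Lipschitz gradient, the preimage of any Lebesgue-null Borel set $A$ under a subgradient selector of $g$ is contained in $\nabla g^*(A)$, and Lipschitz maps send null sets to null sets, so $\mu\bigl((\nabla g)^{-1}(A)\bigr)=0$ since $\mu \ll \mathscr{L}^d$. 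Your route is more elementary (no Aleksandrov, no area formula) and has the practical advantage of applying verbatim to an \emph{arbitrary} measurable subgradient selector rather than only to $\nabla g$ on its differentiability set --- which is exactly the generality in which the paper invokes the lemma, namely for $I+\eta S$ with $S$ a selector of $\partial H$ (the paper's version covers this too, but only after the extra observation that the selector agrees with $\nabla g$ off an $\mathscr{L}^d$-null, hence $\mu$-null, set). What the paper's approach buys in exchange is brevity, given that the AGS machinery is already imported elsewhere. Your handling of the measurability side issues (the image $\nabla g^*(A)$ having outer measure zero, the preimage being Borel and contained in it) is also sound.
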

\begin{proof}
  Let $\Omega = \{x: \nabla^2 g(x) \text{ exists}\}$, then $\mathscr{L}^d(\mathbb{R}^d \setminus \Omega) = 0$ (Aleksandrov, see, e.g., \cite[Thm. 5.5.4]{ambrosio2005gradient}). Since $g$ is strongly convex, $\nabla g$ is injective on $\Omega$ and $\vert \det \nabla^2 g \vert >0$ on $\Omega$. By applying Lemma 5.5.3 \cite{ambrosio2005gradient}, $\nabla g_{\#}\mu \ll \mathscr{L}^d.$
\end{proof}

\begin{lemma}\cite{salim2020wasserstein}
\label{lem:generalgeoineq}
    Let $\mathcal{G}: \mathcal{P}_2(X) \to \mathbb{R}\cup\{+\infty\}$ be proper and l.s.c. Suppose that $\mathcal{G}$ is convex along generalized geodesics. Let $\nu \in \mathcal{P}_{2,\abs}(X)$, $\mu, \pi \in \mathcal{P}_2(X)$. If $\xi \in \partial \mathcal{G}(\mu)$, then
    \begin{align*}
        \int_{X}{\langle \xi \circ T_{\nu}^{\mu}(x), T_{\nu}^{\pi}(x) - T_{\nu}^{\mu}(x) \rangle}d\nu(x) \leq \mathcal{G}(\pi) - \mathcal{G}(\mu).
    \end{align*}
\end{lemma}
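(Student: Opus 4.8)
The plan is to transport the inequality back to the reference measure $\nu$, where it becomes a first-order convexity inequality, and to exploit that convexity along generalized geodesics lets a Fr\'echet subgradient control variations in non-$W_2$-optimal directions. First I would note that, since $\nu\in\mathcal{P}_{2,\abs}(X)$, Brenier's theorem (Sect.~\ref{subsect:optimaltransport}) gives the optimal maps $T_\nu^\mu, T_\nu^\pi$, and the Borel map $R(x):=(T_\nu^\mu(x),T_\nu^\pi(x))$ pushes $\nu$ forward to a coupling $\gamma:=R_{\#}\nu\in\Gamma(\mu,\pi)$. By the transfer lemma (Lem.~\ref{lem:transfer}) applied to $R$ and the integrand $(x,y)\mapsto\langle\xi(x),y-x\rangle$ --- which is $\nu$-integrable after composition with $R$ by Cauchy--Schwarz, because $\xi\circ T_\nu^\mu\in L^2(X,X,\nu)$ (transfer lemma again, using $\xi\in L^2(X,X,\mu)$) and $T_\nu^\mu,T_\nu^\pi\in L^2(X,X,\nu)$ --- the left-hand side of the claim equals $\int_{X\times X}\langle\xi(x),y-x\rangle\,d\gamma(x,y)$. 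Thus the lemma is equivalent to the subgradient inequality $\mathcal{G}(\pi)\ge\mathcal{G}(\mu)+\int\langle\xi(x),y-x\rangle\,d\gamma(x,y)$, tested against the coupling $\gamma$, which is typically \emph{not} the $W_2$-optimal one --- precisely the situation generalized geodesics are made for.

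Second, I would obtain this inequality by a difference-quotient argument along the explicit generalized geodesic $g_t:=(S_t)_{\#}\nu$ with $S_t:=(1-t)T_\nu^\mu+tT_\nu^\pi$, so that $g_0=\mu$ and $g_1=\pi$. Convexity of $\mathcal{G}$ along generalized geodesics (Def.~\ref{def:generalgeo}) yields $\mathcal{G}(g_t)\le(1-t)\mathcal{G}(\mu)+t\,\mathcal{G}(\pi)$, hence $\tfrac1t\bigl(\mathcal{G}(g_t)-\mathcal{G}(\mu)\bigr)\le\mathcal{G}(\pi)-\mathcal{G}(\mu)$ for all $t\in(0,1]$, so $\limsup_{t\downarrow0}\tfrac1t(\mathcal{G}(g_t)-\mathcal{G}(\mu))\le\mathcal{G}(\pi)-\mathcal{G}(\mu)$. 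For the matching lower bound I would pair $\xi\in\partial_F^-\mathcal{G}(\mu)$ with the coupling $\gamma_t:=(T_\nu^\mu,S_t)_{\#}\nu\in\Gamma(\mu,g_t)$: a change of variables gives $\int\langle\xi(x),y-x\rangle\,d\gamma_t=t\int\langle\xi(T_\nu^\mu(x)),T_\nu^\pi(x)-T_\nu^\mu(x)\rangle\,d\nu(x)$, and $W_2(\mu,g_t)\le\Vert T_\nu^\mu-S_t\Vert_{L^2(X,X,\nu)}=t\,\Vert T_\nu^\mu-T_\nu^\pi\Vert_{L^2(X,X,\nu)}$, so $o(W_2(\mu,g_t))=o(t)$; therefore $\mathcal{G}(g_t)-\mathcal{G}(\mu)\ge t\int\langle\xi(T_\nu^\mu(x)),T_\nu^\pi(x)-T_\nu^\mu(x)\rangle\,d\nu(x)+o(t)$, and dividing by $t$ and letting $t\downarrow0$ gives $\liminf_{t\downarrow0}\tfrac1t(\mathcal{G}(g_t)-\mathcal{G}(\mu))\ge\int\langle\xi(T_\nu^\mu(x)),T_\nu^\pi(x)-T_\nu^\mu(x)\rangle\,d\nu(x)$. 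Chaining the two bounds closes the proof. (The degenerate cases are harmless: $\mathcal{G}(\mu)<+\infty$ follows from $\xi\in\partial\mathcal{G}(\mu)$, and if $\mathcal{G}(\pi)=+\infty$ the claim holds since its left-hand side is finite.)

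The main obstacle is the step in which the Fr\'echet subgradient $\xi$ is used to lower-bound $\mathcal{G}(g_t)-\mathcal{G}(\mu)$ through the \emph{particular} coupling $\gamma_t$, which in general is not $W_2$-optimal between $\mu$ and $g_t$: the definition of $\partial_F^-\mathcal{G}(\mu)$ in Sect.~\ref{subsect:subdiffcalwass} only involves optimal couplings, and for a merely geodesically convex $\mathcal{G}$ the inequality one wants can fail. It is exactly the standing hypothesis that $\mathcal{G}$ is convex along generalized geodesics that makes it valid --- equivalently, that the pullback $T\mapsto\mathcal{G}(T_{\#}\nu)$ is a genuine convex functional on the convex set of optimal maps from $\nu$ inside $L^2(X,X,\nu)$, so that $\xi\circ T_\nu^\mu$ is one of its subgradients at $T_\nu^\mu$; this is part of the subdifferential calculus for generalized-geodesically-convex functionals in \cite{ambrosio2005gradient}, and one may alternatively invoke that theory to get the arbitrary-coupling inequality of the first paragraph directly. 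Everything else --- Brenier's theorem, the transfer lemma, Cauchy--Schwarz, the affine identity $S_t-T_\nu^\mu=t(T_\nu^\pi-T_\nu^\mu)$, and the bound on $W_2(\mu,g_t)$ --- is routine.
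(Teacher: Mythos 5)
The paper does not actually prove this lemma --- it is imported verbatim from \cite{salim2020wasserstein} --- so your proposal has to be judged on its own terms. Your reduction to the coupling $(T_\nu^\mu,T_\nu^\pi)_{\#}\nu$ via the transfer lemma and your upper bound $\limsup_{t\downarrow 0}\tfrac{1}{t}(\mathcal G(g_t)-\mathcal G(\mu))\le\mathcal G(\pi)-\mathcal G(\mu)$ from convexity along the generalized geodesic are both correct, and the difference-quotient strategy is the right one. The problem is the lower bound, which is the only nontrivial step and which you do not actually prove. You pair $\xi\in\partial_F^-\mathcal G(\mu)$ with the coupling $\gamma_t=(T_\nu^\mu,S_t)_{\#}\nu$, but the definition of $\partial_F^-$ in Sect.~\ref{subsect:subdiffcalwass} only yields the inequality along \emph{optimal} couplings, and $\gamma_t$ is in general not optimal between $\mu$ and $g_t$: the induced map $S_t\circ T_\mu^\nu=I+t\,(T_\nu^\pi-T_\nu^\mu)\circ T_\mu^\nu$ is not the gradient of a convex function. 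You flag this yourself, but your proposed fix --- that generalized geodesic convexity makes $\xi\circ T_\nu^\mu$ a subgradient of the pullback $T\mapsto\mathcal G(T_{\#}\nu)$ at $T_\nu^\mu$ --- is precisely the statement to be proved, rewritten in $L^2(X,X,\nu)$; asserting it is circular.

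Worse, with the paper's definition of $\partial_F^-$ the asserted fix cannot hold for arbitrary subgradients. Optimal couplings pair trivially with any $\xi^{\perp}\in\tang_{\mu}\mathcal P_2(X)^{\perp}$, so $\partial_F^-\mathcal G(\mu)$ is invariant under adding such a $\xi^{\perp}$; but the left-hand side of the lemma changes by $\int\langle\xi^{\perp},(T_\nu^\pi-T_\nu^\mu)\circ T_\mu^\nu\rangle\,d\mu$, and $(T_\nu^\pi-T_\nu^\mu)\circ T_\mu^\nu$ is generally \emph{not} a tangent vector at $\mu$, so this term need not vanish or have a sign. Hence the inequality cannot follow from membership in $\partial_F^-\mathcal G(\mu)$ alone. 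The missing ingredient is the notion of \emph{strong} subdifferential \cite[Def.~10.1.1, Lem.~10.1.2]{ambrosio2005gradient}: $\xi$ is a strong subdifferential at $\mu$ if $\mathcal G(T_{\#}\mu)-\mathcal G(\mu)\ge\int\langle\xi,T-I\rangle\,d\mu+o(\Vert T-I\Vert_{L^2(X,X,\mu)})$ for \emph{all} maps $T$, not only optimal ones. For such $\xi$ your computation closes immediately (take $T=S_t\circ T_\mu^\nu$, so that $\Vert T-I\Vert_{L^2(X,X,\mu)}=O(t)$), and the two elements to which the lemma is actually applied in this paper --- a measurable selector of $\partial H$ viewed as a subgradient of the potential energy $\mathcal E_H$, and the JKO optimality element $\eta^{-1}(T_{\mu_{n+1}}^{\nu_{n+1}}-I)\in\partial(\mathcal E_G+\mathscr H)(\mu_{n+1})$ --- are both strong subdifferentials. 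Your argument becomes a proof once you either add that hypothesis on $\xi$ or verify it for the specific subgradients in play.
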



\subsection{Existence of a Borel measurable selector of the subdifferential of a convex function}
\label{app:existenceborel}
Given a convex function $H: X \to \mathbb{R}$, we prove that there exists a Borel measurable selector $S(x) \in \partial H(x)$. Although this problem is of natural interest, we are not aware of it as well as its proof at least in standard textbooks in convex analysis. Credits go to a quite recent MathOverflow thread \cite{453991}, from which we give detailed proof as follows.

Firstly, we recall Alexandroff's compactification of a topological space $(X,\tau)$. From set theory, $X$ is strictly smaller than $2^X$, which is the set of all subsets of $X$, i.e., there is no bijection from $X$ to $2^X$. So $2^X$ cannot be contained in $X$. So, there is an element named $\infty$ that is not in $X$. We denote $X^{\infty} = X\cup\{\infty\}$. One-point Alexandroff compactification states that (1) there exists a topology $\tau^{\infty}$ in $X^{\infty}$ accepting $X$ as a \emph{topological subspace}, i.e., the original topology $\tau$ in $X$ is inherited from 
$\tau^{\infty}$, and (2) $(X^{\infty},\tau^{\infty})$ is compact. 

The topology $\tau^{\infty}$ can be specifically described as follows: open sets of $\tau^{\infty}$ are either open sets of $\tau$ or the complements of the form $(X\setminus S)\cup\{\infty\}$ where $S$ are closed compact subsets of $X$.

In our case, $X=\mathbb{R}^d$ and $\tau$ is the standard Euclidean topology, the Alexandroff compactification of $X$ is also metrizable \cite[Thm. 12.12]{bredon2013topology}. It is, in fact, homeomorphic to the sphere $\mathbb{S}^d$ whose topology is inherited from the ambient space $\mathbb{R}^{d+1}$. Moreover, the mentioned metric is the Riemannian metric of $\mathbb{S}^d$ \cite[Thm. 13.29]{lee2012smooth}.


Secondly, since $\psi(x) := H(x) + (1/2)\Vert x \Vert^2$ is 1-strongly convex, its Fenchel conjugate $y \mapsto \psi^*(y)$ defined as
\begin{align*}
    \psi^*(y) = \sup_{x \in \mathbb{R}^d}\{\langle x,y \rangle-\psi(x)\}
\end{align*}
is 1-smooth \cite[Thm. 1]{zhou2018fenchel}.

By \cite[Cor. 23.5.1]{rockafellar1997convex}, $(\partial \psi)^{-1} = \nabla \psi^*$ in the sense that 
\begin{align}
\label{eq:inverseconjugate}
    y \in \partial \psi(\nabla \psi^*(y)) \quad \forall y \in \mathbb{R}^d.
\end{align}
On the other hand, $\partial \psi$ is \emph{strongly monotone} \cite[Ex. 3.9]{mordukhovich2023easy} in the following sense,
\begin{align}
\label{eq:monotone}
    \langle x_2 - x_1, y_2 - y_1 \rangle \geq \Vert x_2 - x_1\Vert^2
\end{align}
for all $x_1, x_2 \in \mathbb{R}^d$ and $y_1 \in \partial \psi(x_1), y_2 \in \partial \psi(x_2)$.

We see that $\nabla \psi^*$ is subjective. Indeed we show that for every $x \in \mathbb{R}^d$, there exists $y \in \mathbb{R}^d$ such that $\nabla \psi^*(y) = x$. We show that relation holds for any $y \in \partial \psi(x)$. By contradiction, suppose that $\nabla \psi^*(y) \neq x$. From the strong monotonicity of $\partial\psi$ as in \eqref{eq:monotone}, $\partial \psi(\nabla \psi^*(y)) \cap \partial \psi(x) = \emptyset.$ However, from \eqref{eq:inverseconjugate} and by the choice of $y$, it holds $y \in \partial \psi(\nabla \psi^*(y)) \cap \partial \psi(x)$. This is a contradiction.

Thirdly, we recall a fundamental result on the compactness of the subdifferential of a convex function: if $C$ is compact, then $\partial \psi(C)$ is compact \cite[Thm. 24.7]{rockafellar1997convex}.

Fourthly, we need the Federer-Morse theorem \cite{bogachev2007measure} as follows: 

\begin{theorem}
Let $Z$ be a compact metric space, $Y$ be a Hausdorff topological space and $f: Z \to Y$ be a continuous mapping. Then, there exists a Borel set $B \subset Z$ such that $f(B) = f(Z)$ and $f$ is injective on $B$. Furtheremore, $f^{-1}: f(Z) \to B$ is Borel.    
\end{theorem}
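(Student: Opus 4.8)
The plan is to realize $B$ as a \emph{lexicographic cross-section} of $f$ and then check the three required properties in turn. First I would fix a homeomorphic embedding $\Phi\colon Z\hookrightarrow Q:=[0,1]^{\mathbb N}$ onto a compact subset of the Hilbert cube; this is available because $Z$ is a compact metric space (take $\Phi=(\varphi_k)_k$ for a countable family of continuous $\varphi_k\colon Z\to[0,1]$ separating points, and use compactness of $Z$ together with Hausdorffness of $Q$ to upgrade the continuous injection to a homeomorphism onto its image). Identifying $Z$ with $\Phi(Z)\subseteq Q$, equip it with the strict lexicographic order $\prec$ inherited from $Q$: $z\prec z'$ iff $z_i=z'_i$ for $i<k$ and $z_k<z'_k$ at the first coordinate $k$ of disagreement. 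Since $f(Z)$ is compact (continuous image of a compact set) and each fibre $f^{-1}(y)$ is a nonempty compact subset of $Q$, the usual coordinate-by-coordinate minimization produces a nested sequence of nonempty compacts whose intersection is a single point $\ell(y)$, the $\prec$-least element of $f^{-1}(y)$. Define $B:=\{\ell(y):y\in f(Z)\}=\{z\in Z:z\preceq z'\text{ for all }z'\in f^{-1}(f(z))\}$. Then $f|_B\colon B\to f(Z)$ is a continuous bijection: surjectivity is immediate, and injectivity holds because any two distinct points of a common fibre are $\prec$-comparable, so at most one of them can be its minimum.

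The heart of the matter is showing $B$ is Borel, and I would do this by a \emph{projection} trick rather than by trying to write $B$ directly as a closed or $G_\delta$ set (which in general it is not). Note that $z\notin B$ precisely when there is some $z'$ in the same fibre with $z'\prec z$, so $Z\setminus B=\pi_1(C)$, where $C:=\{(z,z')\in Z\times Z:f(z)=f(z'),\ z'\prec z\}$ and $\pi_1$ is projection to the first coordinate. The relation $\{(z,z'):f(z)=f(z')\}=(f\times f)^{-1}(\Delta_Y)$ is closed in the compact space $Z\times Z$ (this is where Hausdorffness of $Y$ enters), hence compact; and the strict lexicographic relation $\{z'\prec z\}=\bigcup_{k\ge 1}\{z'_i=z_i\ (i<k),\ z'_k<z_k\}$ is an $F_\sigma$ subset of $Q\times Q$, since each defining set is (closed)$\cap$(open) and open sets in a metric space are $F_\sigma$. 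Intersecting, $C$ is $\sigma$-compact, and a continuous image of a $\sigma$-compact set is $\sigma$-compact; hence $Z\setminus B=\pi_1(C)$ is $F_\sigma$, so $B$ is a $G_\delta$, in particular Borel.

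Finally, for Borel measurability of $f^{-1}=(f|_B)^{-1}\colon f(Z)\to B$, I would first observe that $f(Z)$ is metrizable: it is a compact Hausdorff space and $g\mapsto g\circ f$ embeds $C(f(Z))$ isometrically into the separable space $C(Z)$, forcing $C(f(Z))$ separable and therefore $f(Z)$ metrizable (a compact Hausdorff space with separable function space is second countable). Thus $f|_B$ is a continuous injection of the Borel set $B$ (a standard Borel space, being Borel in the Polish space $Z$) into the Polish space $f(Z)$, and the Lusin--Souslin theorem applies: such a map sends Borel sets to Borel sets and has Borel inverse, which is exactly the assertion. (If one wishes to avoid invoking Lusin--Souslin, the inverse can instead be built explicitly as $y\mapsto(\mu_k(y))_k$ with $\mu_k(y)$ the $k$-th coordinate of $\ell(y)$, verifying inductively that each $\mu_k$ is lower semicontinuous in the appropriate parametrized sense, hence Borel.)

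I expect the Borel-ness of $B$ to be the main obstacle: the tempting idea that the fibrewise minimum depends (semi)continuously on the fibre breaks down as soon as $f$ is far from injective, and the route that actually works is the $\sigma$-compactness of $C$ plus continuity of $\pi_1$ argument above, which rests on two small but essential observations — that the ``same fibre'' relation is compact (Hausdorffness of $Y$) and that the strict lexicographic relation is $F_\sigma$.
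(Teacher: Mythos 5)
The statement you are proving is the Federer--Morse theorem, which the paper does not prove at all: it is quoted in Appendix A.1 with a citation to Bogachev's \emph{Measure Theory} and used as a black box in the construction of a Borel measurable selector of $\partial H$. So there is no in-paper argument to compare against; what you have written is essentially the standard proof of the cited result, and it is correct. The embedding of $Z$ into the Hilbert cube, the lexicographically least selection $\ell(y)$ in each nonempty compact fibre, the identification $Z\setminus B=\pi_1(C)$ with $C$ $\sigma$-compact (Hausdorffness of $Y$ makes the same-fibre relation $(f\times f)^{-1}(\Delta_Y)$ closed hence compact, and the strict lexicographic relation is $F_\sigma$ because open sets in a metric space are $F_\sigma$), and the deduction that $B$ is $G_\delta$ are all sound. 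The final step is also fine: $f(Z)$ is compact metrizable (your separability-of-$C(f(Z))$ argument works, or one can invoke directly that a continuous Hausdorff image of a compact metric space is metrizable), so $f|_B$ is a continuous injection between standard Borel spaces and Lusin--Souslin gives Borel measurability of the inverse. One small caution about your parenthetical alternative: $\mu_1$ is genuinely lower semicontinuous because the fibre correspondence $y\mapsto f^{-1}(y)$ is upper hemicontinuous, but for $k\ge 2$ the sets over which you minimize are cut out by the merely l.s.c.\ functions $\mu_1,\dots,\mu_{k-1}$, so the higher coordinates $\mu_k$ are Borel of higher class rather than l.s.c.; that inductive route would have to track Borel classes. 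Since you offer it only as an optional detour and your main argument goes through Lusin--Souslin, this does not affect the validity of the proof.
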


Now we observe that $\nabla \psi^*(x) \to \infty$ was $x \to \infty$. Otherwise, by using the compactness of $\partial \psi$ and \eqref{eq:inverseconjugate}, we will get a contradiction immediately. We then can extend $\nabla \psi^*$ in a \emph{continuous} way in the Alexandorff compactification space $X^{\infty}=\mathbb{R}^{d} \cup \{\infty\}$ by simply putting $\nabla \psi^*(\infty) = \infty.$ We shall show that this extension of $\nabla \psi^*$ is continuous from $(X^{\infty},\tau^{\infty})$ to $(X^{\infty},\tau^{\infty})$, or $(\nabla \psi^*)^{-1}(V) \in \tau^{\infty}$ for all $V \in \tau^{\infty}$. Recall that, by construction, open sets of $\tau^{\infty}$ are either open sets of $\tau$ or the complements of the form $(X \setminus S)\cup\{\infty\}$ where $S$ are compact subsets of $X$. The former type of open sets is handled easily since $\nabla \psi^*$ is already continuous in $(X,\tau)$. For the latter type, let $U=(\nabla \psi^*)^{-1}((X\setminus S)\cup\{\infty\})=(\nabla \psi^*)^{-1}(X\setminus S)\cup\{\infty\} = (X\setminus (\nabla \psi^*)^{-1}(S))\cup\{\infty\}$ for a compact set $S$. Proving $U$ open boils down to proving $(\nabla \psi^*)^{-1}(S)$ compact. Indeed, it is closed since $S$ is closed. It is bounded. Otherwise, it will be contradictory to $\nabla \psi^*(x) \to \infty$ as $x \to \infty$.

We now can apply the Federer-Morse theorem for $Z=Y=(X^{\infty},\tau^{\infty})$ by noting that $(X^{\infty},\tau^{\infty})$ is metrizable and a metric space is a Hausdorff space, and for $f=\nabla \psi^*$: there exists a Borel set $B \subset X^{\infty}$ such that $\nabla \psi^* \vert_{B}: B \to X^{\infty}$ is a bijection and the inverse mapping $(\nabla \psi^* \vert_{B})^{-1}: X^{\infty} \to B$ is Borel measurable (here Borel set/measurability are with respect to $\tau^{\infty}$, not yet $\tau$). This is the Borel (w.r.t. $\tau^{\infty}$) selector of $\partial \psi$.

Finally, we need to convert Borel measurability w.r.t. $\tau^{\infty}$ to Borel measurability w.r.t. $\tau$. In terms of mapping, $\infty$ is mapped to $\infty$ either way around. So we only need to show: $(\nabla \psi^* \vert_{B})^{-1}: X \to X$ is Borel measurable w.r.t. $\tau$. Take any Borel set (w.r.t. $\tau$) $E \subset X$, $(\nabla \psi^*\vert_B)(B \cap E)$ is Borel set w.r.t. $\tau^{\infty}$ and does not contain $\infty$. We shall prove $(\nabla \psi^*\vert_B)(B \cap E)$ is a Borel set w.r.t. $\tau$. This follows directly from the following claim, which is from another Mathematics Stack Exchange thread \cite{3532983}.

Claim \cite{3532983}: 
\begin{align}
\label{eq:sigmatopo}
    \sigma(\tau^{\infty}) = \sigma(\tau \cup\{\infty\}) = \sigma(\tau) \cup \{V \cup \{\infty\}: V \in \sigma(\tau)\}.
\end{align}
A sketch of the claim proof goes as follows. For the first equality in \eqref{eq:sigmatopo}, first we have $\sigma(\tau \cup \{\infty\}) \subset \sigma(\tau^{\infty})$ because (1) $\tau \subset \tau^{\infty}$ and (2) $\{\infty\} = X^{\infty} \setminus X \in \sigma(\tau^{\infty})$ as $X,X^{\infty} \in \tau^{\infty}$. On the other hand, $\sigma(\tau^{\infty}) \subset \sigma(\tau \cup \{\infty\})$ because, again, of the construction of $\tau^{\infty}$: let $U \in \tau^{\infty}$, if $U \in \tau$ then $U \in \sigma(\tau \cup \{\infty\})$, otherwise $U=(X \setminus S) \cup \{\infty\}$ for some compact set $S \subset X$. As $X=\mathbb{R}^d$, $S$ is closed, so $(X \setminus S) \in \tau$ implying $U \in \sigma(\tau \cup \{\infty\}).$ For the second equality in \eqref{eq:sigmatopo}, it is straight forward to verify that $\mathscr G:= \sigma(\tau) \cup \{V \cup \{\infty\}: V \in \sigma(\tau)\}$ is a sigma-algebra in $X^{\infty}$. Since $\tau \cup \{\infty\} \subset \mathscr G$, it holds $\sigma(\tau \cup \{\infty\}) \subset \mathscr G$. Conversely, as $\sigma(\tau) \subset \sigma(\tau \cup \{\infty\})$ and - consequently - $V \cup \{\infty\} \in \sigma(\tau \cup \{\infty\})$ for all $V \in \sigma(\tau)$, it holds $\mathscr G \subset \sigma(\tau \cup \{\infty\}).$

We conclude that $(\nabla \psi^* \vert_{B})^{-1}: X \to X$ is a Borel (w.r.t. $\tau$) selector of $\partial \psi$. As a consequence, $S:=(\nabla \psi^* \vert_{B})^{-1} - I$ is a Borel measurable selector of $\partial H$.

\subsection{Maximum Mean Discrepancy}
\label{MMDsect}
\subsubsection{Definition}
\label{subsubdefMMD}
The notion of Maximum Mean Discrepancy (MMD) between two distributions is introduced in \cite{gretton2012kernel}. Given a kernel $k: X \times X \to \mathbb{R}$ and denote by $\mathcal{K}$ its reproducing kernel Hilbert space. Given two distributions $\mu$ and $\nu$, the maximum mean discrepancy between them are defined as
\begin{align*}
    \MMD{(\mu,\nu)} = \Vert f_{\mu,\nu} \Vert_{\mathcal{K}}, \quad \text{where } f_{\mu,\nu}(z) = \int{k(x,z)}d\nu(x) - \int{k(x,z)}d\mu(x),~ \forall z \in X,
\end{align*}
where $f_{\mu,\nu}$ is called the witness function. Given some target (or optimal) distribution $\mu^*$, we seek to optimize $\mathcal{F}(\mu)=\frac{1}{2}\MMD{(\mu,\mu^*)}^2$. Note that $\mathcal{F}(\mu)$ admits the following free-energy expression \cite{arbel2019maximum}
\begin{align*}
    \mathcal{F}(\mu)=\int{F(x)}d\mu(x) + \frac{1}{2} \int{k(x,x')}d\mu(x)d\mu(x') + C
\end{align*}
where 
\begin{align*}
    F(x) = -\int{k(x,x')}d\mu^*(x'), \quad C = \frac{1}{2}\int{k(x,x')}d\mu^*(x)d\mu^*(x'). 
\end{align*}

In general, $\mathcal{F}$ is not convex along generalized geodesics. Rather, it exhibits some weakly convex structure \cite{arbel2019maximum} that falls within the DC spectrum as detailed in Subsection \ref{subsec:dcmaxmeandis}.
\subsubsection{Connection with infinite-width one hidden layer neural networks}
See also \cite{arbel2019maximum,salim2020wasserstein}. We include the discussion here for completeness.

Consider a one-hidden-layer neural network
\begin{align*}
    f(x) = \dfrac{1}{n}\sum_{i=1}^{n}\psi(x,z_i) 
\end{align*}
where $\psi(x,z_i)=w_i \sigma(\langle x,\theta_i \rangle)$, $z_i=(\theta_i,w_i) \in {Z}$ is the parameters (in and out) associated with $i$-th hidden neuron, $n$ is the number of hidden neurons, $\sigma$ is the activation function.

When the number of hidden neurons $n$ tends to infinity, $f$ can be written as $f(x)=\int{\psi(x,z)}d\mu(z)$ for some distribution $\mu$ over the parameter space $Z$. Given data $(x,y) \sim p(x,y)$ and assuming quadratic loss, the optimization problem reads
\begin{align*}
    \min_{\mu \in \mathcal{P}_2(Z)} \mathcal{L}(\mu):={\mathbb{E}_{(x,y) \sim p(x,y)}\left(y- \int{\psi(x,z)}d\mu(z)\right)^2}.
\end{align*}
We assume that the model is well-specified, i.e., there exists $\mu^* \in \mathcal{P}_2(Z)$ such that 
\begin{align}
\label{eq:wellspecified}
    \mathbb{E}_{y \sim p(y \vert x)}(y) = \int{\psi(x,z)} d\mu^*(z).
\end{align}

Expanding $\mathcal{L}$, we get
\begin{align*}
    \mathcal{L}(\mu) = -2  \int{ \mathbb{E}_{(x,y) \sim p(x,y)}(y\psi(x,z)})d\mu(z) + \iint \mathbb{E}_{x \sim p(x)}(\psi(x,z) \psi(x,z')) d\mu(z)d\mu(z') + C
\end{align*}
where $C$ does not depend on $\mu$.

Naturally, we define the kernel
\begin{align*}
    k(z,z') = \mathbb{E}_{x \sim p(x)}(\psi(x,z) \psi(x,z')) \quad \forall z,z' \in {Z}.
\end{align*}

Using the assumption that the model is well-specified \eqref{eq:wellspecified}, we obtain
\begin{align*}
    \int{k(z,z')}d \mu^*(z') &= \int{\mathbb{E}_{x \sim p(x)}(\psi(x,z) \psi(x,z'))}d \mu^*(z')\\
    &= \mathbb{E}_{x \sim p(x)} \left( \int{\psi(x,z) \psi(x,z')} d\mu^*(z') \right)\\
    &= \mathbb{E}_{x \sim p(x)} \left( \psi(x,z)\int{\psi(x,z')} d\mu^*(z') \right)\\
    &= \mathbb{E}_{x \sim p(x)} \left( \psi(x,z)\mathbb{E}_{y \sim p(y \vert x)}(y) \right)\\
    &= \mathbb{E}_{(x,y) \sim p(x,y)} \left( y \psi(x,z) \right).
\end{align*}
Therefore, $\mathcal{L}$ can be written as
\begin{align*}
    \mathcal{L}(\mu) = -2 \int{\left[\int{k(z,z')}d\mu^*(z')\right]}d\mu(z) + \iint{k(z,z')} d \mu(z) d\mu(z') + C.
\end{align*}
This is exactly the MMD setting in Sect. \ref{subsubdefMMD}.

\subsubsection{DC structure}
\label{subsec:dcmaxmeandis}
Let $k$ be a kernel whose gradient is Lipschitz continuous, i.e., for some $L>0$,
\begin{align*}
    \Vert \nabla k(x,y)-\nabla k(x',y') \Vert \leq L\left(\Vert x-x' \Vert^2 + \Vert y-y' \Vert^2 \right)^{\frac{1}{2}}, \quad \forall x,x',y,y' \in X,
\end{align*}
which can be expressed equivalently as
\begin{align*}
    \Vert \nabla_x k(x,y) - \nabla_x k(x',y') \Vert^2 + \Vert \nabla_y k(x,y) - \nabla_y k(x',y') \Vert^2 \leq L^2\left(\Vert x-x' \Vert^2 + \Vert y-y' \Vert^2 \right).
\end{align*}
Let $\mu^*$ be some target distribution and consider the free-energy functional
\begin{align*}
    \mathcal{F}(\mu) = \iint k(x,y) d\mu(x)d\mu(y) - 2 \iint k(x,y) d\mu^*(y) d\mu(x).
\end{align*}
Let $\alpha>0$, we can rewrite $\mathcal{F}$ as follows
\begin{align*}
    \mathcal{F}(\mu) = \iint \left[\alpha \Vert x \Vert^2 + \alpha\Vert y \Vert^2 + k(x,y) \right]d\mu(x)d\mu(y) - 2 \int \left[\alpha\Vert x \Vert^2 + \int k(x,y) d\mu^*(y) \right]d\mu(x).
\end{align*}

As $k$ is Lipschitz smooth w.r.t. $(x,y)$, $x \mapsto \int{k(x,y)}d\mu^*(y)$ is also Lipschitz smooth. Indeed,
\begin{align*}
   &\left \Vert \nabla_x \int{k(x,y)}d\mu^*(y)-\nabla_x \int{k(x',y)}d\mu^*(y) \right\Vert \\
   &=    \left \Vert \int{\left(\nabla_x k(x,y) -\nabla_x k(x',y)\right)}d\mu^*(y) \right\Vert\\
   &\leq \left(\int \left\Vert \nabla_x k(x,y) -\nabla_x k(x',y) \right\Vert^2 d\mu^*(y) \right)^{\frac{1}{2}}\\
   &\leq L \left(\int \left\Vert x-x' \right\Vert^2 d\mu^*(y) \right)^{\frac{1}{2}}\\
   &= L \Vert x-x' \Vert.
\end{align*}

Next, as a standard result, if $f$ is an L-smooth function, $x \mapsto (\alpha/2)\Vert x \Vert^2 \pm f(x)$ are convex whenever $\alpha \geq L$. Therefore, for $\alpha \geq L$, $W(x,y):= \alpha \Vert x \Vert^2 + \alpha\Vert y \Vert^2 + k(x,y)$ is convex and $F(x) = -2\left[\alpha\Vert x \Vert^2 + \int_{X} k(x,y) d\mu^*(y) \right]$ is concave. From \cite[Prop. 9.3.5]{ambrosio2005gradient}, the interaction energy corresponding to $W$ is generalized geodesically convex.

\subsection{Proof of Lemma \ref{lem:descent}}
\label{subsect:prooflemdescent}
Since $I + \eta S$ is a subgradient selector of a convex function, the optimal transport between $\mu_n$ and $\nu_{n+1}$ is given by
\begin{align}
\label{eq:Tmunnu}
    T_{\mu_n}^{\nu_{n+1}} = I + \eta S.
\end{align}
 and between $\mu_{n+1}$ and $\nu_{n+1}$ \cite[Lem. 10.1.2]{ambrosio2005gradient}
 \begin{align}
 \label{eq:Tmun1nu}
         T_{\mu_{n+1}}^{\nu_{n+1}} \in I + \eta \partial\left( \mathcal{E}_G + \mathscr{H} \right)(\mu_{n+1}).
 \end{align}

    Since $\mathcal{E}_H$ is convex along generalized geodesics \cite[Prop. 9.3.2]{ambrosio2005gradient} and $S$ is a subgradient of $\mathcal{E}_H$ at $\mu_n$ \cite[Proposition 4.13]{ambrosio2006gradient}, by Lem. \ref{lem:generalgeoineq} it holds, for any $\nu \in \mathcal{P}_{2,\abs}(X)$,
    \begin{align*}
        \mathcal{E}_H(\mu_{n+1}) &\geq \mathcal{E}_H(\mu_n) + \int_X{\langle S \circ T_{\nu}^{\mu_n}(x),T_{\nu}^{\mu_{n+1}}(x)-T_{\nu}^{\mu_n}(x) \rangle} d\nu(x).
    \end{align*}
    By choosing $\nu = \nu_{n+1}$ {(note that $\nu_{n+1}\in \mathcal{P}_{2,\abs}(X)$)},
    \begin{align}
    \label{eq:EHeval}
        \mathcal{E}_H(\mu_{n+1}) &\geq \mathcal{E}_H(\mu_n) + \int_X{\langle S \circ T_{\nu_{n+1}}^{\mu_n}(x),T_{\nu_{n+1}}^{\mu_{n+1}}(x)-T_{\nu_{n+1}}^{\mu_n}(x) \rangle} d\nu_{n+1}(x)\notag\\
        &= \mathcal{E}_H(\mu_n) + \dfrac{1}{\eta}\int_X{\langle (T_{\mu_n}^{\nu_{n+1}}-I) \circ T_{\nu_{n+1}}^{\mu_n}(x),T_{\nu_{n+1}}^{\mu_{n+1}}(x)-T_{\nu_{n+1}}^{\mu_n}(x) \rangle} d\nu_{n+1}(x)\notag\\
        &= \mathcal{E}_H(\mu_n) + \dfrac{1}{\eta}\int_X{\langle x-T_{\nu_{n+1}}^{\mu_n}(x) ,T_{\nu_{n+1}}^{\mu_{n+1}}(x)-T_{\nu_{n+1}}^{\mu_n}(x) \rangle} d\nu_{n+1}(x)
    \end{align}
where the second equality uses \eqref{eq:Tmunnu} and the last one uses Lem. \ref{lem:optindentity}.

On the other hand, since $\mathcal{E}_G + \mathscr{H}$ is convex along generalized geodesics, by applying Lem. \ref{lem:generalgeoineq} for $\mathcal{E}_G + \mathscr{H}$ at $\mu_{n+1}$ with a subgradient $\eta^{-1}(T_{\mu_{n+1}}^{\nu_{n+1}}-I) \in \partial(\mathcal{E}_G + \mathscr{H})(\mu_{n+1})$ (from \eqref{eq:Tmun1nu}),
\begin{align}
\label{eq:GHentroineq}
    &(\mathcal{E}_G+\mathscr{H})(\mu_n) \notag\\
    &\geq (\mathcal{E}_G+\mathscr{H})(\mu_{n+1}) + \int_X{\left \langle \dfrac{(T_{\mu_{n+1}}^{\nu_{n+1}}-I)}{\eta} \circ T_{\nu_{n+1}}^{\mu_{n+1}}(x), T_{\nu_{n+1}}^{\mu_n}(x) - T_{\nu_{n+1}}^{\mu_{n+1}}(x) \right \rangle} d\nu_{n+1}(x) \notag\\
    &= (\mathcal{E}_G+\mathscr{H})(\mu_{n+1}) + \dfrac{1}{\eta}\int_X{\left \langle x-T_{\nu_{n+1}}^{\mu_{n+1}}(x), T_{\nu_{n+1}}^{\mu_n}(x) - T_{\nu_{n+1}}^{\mu_{n+1}}(x) \right \rangle} d\nu_{n+1}(x)
\end{align}
where the last equality uses Lem. \ref{lem:optindentity}.

By adding \eqref{eq:GHentroineq} and \eqref{eq:EHeval} side by side, 
\begin{align}
\label{eq:secondinequality}
    \mathcal{F}(\mu_n) \geq \mathcal{F}(\mu_{n+1}) + \dfrac{1}{\eta} \int_X{\Vert T_{\nu_{n+1}}^{\mu_n}(x) - T_{\nu_{n+1}}^{\mu_{n+1}}(x) \Vert^2} d\nu_{n+1}(x).
\end{align}

\subsection{Proof of Theorem \ref{thm:asymptoticconvergence}}
\label{subsect:proofthmasym}

Let $\mu^* \in \mathcal{P}_2(X)$ be a cluster point of $\{\mu_n\}_{n \in \mathbb{N}}$. There exists a subsequence $\mu_{n_k} \xrightarrow{\wass} \mu^*$. It holds
\begin{align*}
    \liminf_{k \to \infty}{\mathcal{F}(\mu_{n_k})} &=  \liminf_{k \to \infty}{\left(\mathscr{H}(\mu_{n_k}) + \mathcal{E}_F(\mu_{n_k}) \right)}\\
    &= \liminf_{k \to \infty}{\mathscr{H}(\mu_{n_k})} + \mathcal{E}_F(\mu^*)\\
    &\geq \mathscr{H}(\mu^*) + \mathcal{E}_F(\mu^*),
\end{align*}
since $\mathscr{H}$ is l.s.c. and $\mathcal{E}_F$ is continuous w.r.t. Wasserstein topology. Therefore, $\mathscr{H}(\mu^*) < +\infty$, which further implies that $\mu^* \in \mathcal{P}_{2,\abs}(X)$.

We have
\begin{align}
\label{eq:TTcompose}
    \int_X{\Vert T_{\nu_{n+1}}^{\mu_n}(x) - T_{\nu_{n+1}}^{\mu_{n+1}}(x) \Vert^2} d\nu_{n+1}(x) &=     \int_X{\Vert T_{\nu_{n+1}}^{\mu_n}(x) - T_{\nu_{n+1}}^{\mu_{n+1}}(x) \Vert^2} d {T^{\nu_{n+1}}_{\mu_n}}_{\#}\mu_n(x) \notag\\
    &=\int_X{\Vert x - T_{\nu_{n+1}}^{\mu_{n+1}} \circ T^{\nu_{n+1}}_{\mu_n}(x) \Vert^2} d \mu_n(x).
\end{align}
We observe that $T_{\nu_{n+1}}^{\mu_{n+1}} \circ T^{\nu_{n+1}}_{\mu_n}$ is a (possibly non-optimal) transport pushing $\mu_n$ to $\mu_{n+1}$, by the optimality of $T_{\mu_n}^{\mu_{n+1}}$,
\begin{align}
\label{eq:TTTopt}
    \int_X{\Vert x - T_{\nu_{n+1}}^{\mu_{n+1}} \circ T^{\nu_{n+1}}_{\mu_n}(x) \Vert^2} d \mu_n(x) \geq \int_X{\Vert x -  T^{\mu_{n+1}}_{\mu_n}(x) \Vert^2} d \mu_n(x) = W_2^2(\mu_n,\mu_{n+1}).
\end{align}

By Lem. \ref{lem:descent} and \eqref{eq:TTcompose}, \eqref{eq:TTTopt},
\begin{align}
\label{eq:totelescopeF}
    \mathcal{F}(\mu_n) \geq \mathcal{F}(\mu_{n+1}) + \dfrac{1}{\eta}W_2^2(\mu_n,\mu_{n+1}).
\end{align}
Note that $\mathcal{F}$ is bounded below (Assumption \ref{assum_main}), telescoping \eqref{eq:totelescopeF} gives us
\begin{align}
\label{eq:finitesum}
    \sum_{n=0}^{\infty}{W_2^2(\mu_n,\mu_{n+1})} <+\infty.
\end{align}
In particular, $W_2(\mu_n,\mu_{n+1}) \to 0$. This together with $\mu_{n_k} \xrightarrow{\wass} \mu^*$ implies $\mu_{n_{k}+1} \xrightarrow{\wass} \mu^*$.

Under Assumption \ref{assum:extra}, $S=\nabla H$ and $S$ is continuous. Next, recall $\nu_{n_k+1} = (I + \eta S)_{\#} \mu_{n_k}$, we show
    \begin{align}
    \label{eq:extra}
        \nu_{n_k+1} \xrightarrow{\na} \nu^*:=(I+\eta S)_{\#} \mu^* \text{ as } k \to +\infty.
    \end{align}
    Thus, let $f$ be a continuous and bounded test functional in $X$, by using transfer lemma \ref{lem:transfer},
    \begin{align}
    \label{eq:fnuk1}
        \lim_{k \to \infty} \int_X{f(x)} d \nu_{n_k+1}(x) &= \lim_{k \to \infty} \int_X{f(x)} d (I+\eta S) _{\#} \mu_{n_k}(x)\notag\\
        &= \lim_{k \to \infty} \int_X{f(x+\eta S(x))} d\mu_{n_k}(x)\\
        &= \int_X{f(x+\eta S(x))}d\mu^*(x) \notag\\
        &= \int_X{f(x)}d\nu^*(x),
    \end{align}
since $S$ is continuous.
So $\nu_{n_k+1} \xrightarrow{\na} \nu^*$. We go one step further and prove that $\nu_{n_k+1}$ actually converges to $\nu^*$ in the Wasserstein metric. This boils down to showing convergence in second-order moments, i.e.,
\begin{align*}
    \mathfrak m_2(\nu_{n_k+1}) \to \mathfrak m_2(\nu^*),
\end{align*}
which is equivalent to showing that
\begin{align*}
    \int_X{\Vert x+\eta S(x) \Vert^2} d\mu_{n_k}(x) \to \int_X{\Vert x+\eta S(x) \Vert^2} d\mu^*(x).
\end{align*}
On the other hand, $\psi(x) := \Vert x+ \eta S(x) \Vert^2$ has quadratic growth (follows from Lem. \ref{gradquadraticgr}) and $\mu_{n_k} \to \mu^*$ in the Wasserstein metric, so \cite[Prop. 2.4]{ambrosio2013user}
\begin{align*}
    \lim_{k \to \infty}{\int_X{\Vert x+\eta S(x) \Vert^2} d\mu_{n_k}(x)} =\int_X \Vert x+\eta S(x) \Vert^2 d\mu^*(x)
\end{align*}
Therefore, $\nu_{n_k+1} \to \nu^*$ in Wasserstein metric.

To proceed further, we need the following theorem stating that the graph of the subdifferential of a geodesically convex function is closed under the product of Wasserstein and \emph{weak} topologies.
\begin{theorem}[Closedness of subdifferential graph]\cite[Lemma 10.1.3]{ambrosio2005gradient}
\label{thm:closesubdiff}
    Let $\phi$ be a geodesically convex functional satisfying $\dom(\partial \phi) \subset \mathcal{P}_{2,\abs}(X).$ Let $\{\mu_n\}_{n \in \mathbb{N}}$ be a sequence {converging} in Wasserstein metric to $\mu \in \dom(\phi)$. Let $\xi_n \in \partial \phi(\mu_n)$ be satisfying
    \begin{align}
    \label{eq:itemsup}
        \sup_{n\in \mathbb{N}} \int_X{\Vert \xi_n(x) \Vert^2} d\mu_n(x) < +\infty,
    \end{align}
    and converging \emph{weakly} to $\xi \in L^2(X,X,\mu)$ in the following sense:
        \begin{align}
        \label{eq:weakconvergence}
        \lim_{n \to \infty}\int_{X}{\zeta(x) \xi_n(x)} d\mu_n(x) = \int_X{\zeta(x) \xi(x)} d\mu(x), \quad \forall \zeta \in C_c^{\infty}(X).
    \end{align}
Then $\xi \in \partial \phi(\mu).$
\end{theorem}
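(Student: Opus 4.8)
The plan is to reduce, via the characterization of the Fréchet subdifferential of geodesically convex functionals (Theorem~\ref{thm:characFrechet}), to a passage to the limit in the subgradient inequalities for the $\xi_n$ in which the test measure $\nu$ stays \emph{fixed}. Since $\xi_n\in\partial\phi(\mu_n)$ forces $\mu_n\in\dom(\partial\phi)\subset\mathcal{P}_{2,\abs}(X)$, Theorem~\ref{thm:characFrechet} (applicable because $\phi$ is proper, l.s.c.\ and geodesically convex) gives, for every $\nu\in\dom(\phi)$ and every $n$,
\[
\phi(\mu_n)\ \le\ \phi(\nu)-\int_X\langle\xi_n(x),\,T_{\mu_n}^{\nu}(x)-x\rangle\,d\mu_n(x).
\]
I would take $\liminf_n$ here: on the left, $\phi(\nu)$ is a constant and $\phi(\mu)\le\liminf_n\phi(\mu_n)$ by lower semicontinuity of $\phi$ along $\mu_n\xrightarrow{\wass}\mu$. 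Hence, \emph{provided} the integral on the right converges to $\int_X\langle\xi(x),T_{\mu}^{\nu}(x)-x\rangle\,d\mu(x)$, one gets $\phi(\nu)-\phi(\mu)\ge\int_X\langle\xi(x),T_{\mu}^{\nu}(x)-x\rangle\,d\mu(x)$ for all $\nu\in\dom(\phi)$; since $(I,T_\mu^\nu)_{\#}\mu$ is then the unique optimal plan, this is exactly the defining inequality of $\partial_F^-\phi(\mu)=\partial\phi(\mu)$ (the $o(W_2(\mu,\nu))$ term being vacuous), so $\xi\in\partial\phi(\mu)$; the absolute continuity of $\mu$ used here is forced a posteriori by the standing hypothesis $\dom(\partial\phi)\subset\mathcal{P}_{2,\abs}(X)$ and, in the use we make of the lemma (Theorem~\ref{thm:asymptoticconvergence}), has already been checked directly.

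The heart of the argument — and the step I expect to be the main obstacle — is the convergence
\[
\int_X\langle\xi_n(x),\,T_{\mu_n}^{\nu}(x)-x\rangle\,d\mu_n(x)\ \longrightarrow\ \int_X\langle\xi(x),\,T_{\mu}^{\nu}(x)-x\rangle\,d\mu(x),
\]
which I would prove by a weak--strong pairing argument. Write $v_n:=T_{\mu_n}^{\nu}-I$, $v:=T_{\mu}^{\nu}-I$. On one side, $\xi_n\to\xi$ \emph{weakly} in the moving-base-measure sense: hypothesis~\eqref{eq:weakconvergence} together with the uniform bound~\eqref{eq:itemsup} (and a Cauchy--Schwarz estimate of the tails, needed because the admissible test integrands grow linearly in $\xi$ and are not compactly supported) upgrades convergence tested against $C_c^\infty(X)$ to narrow convergence $\xi_n\mu_n\rightharpoonup\xi\mu$ of the vector measures, with $\sup_n\|\xi_n\|_{L^2(X,X,\mu_n)}<\infty$. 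On the other side — and this is where $W_2$-convergence of $\mu_n$, not merely narrow convergence, is indispensable — $v_n\to v$ \emph{strongly}: by stability of optimal transport \cite{ambrosio2005gradient} the plans $(I,T_{\mu_n}^{\nu})_{\#}\mu_n$ are tight and every narrow limit is optimal between $\mu$ and $\nu$, hence equal to $(I,T_{\mu}^{\nu})_{\#}\mu$ since $\mu$ is absolutely continuous; moreover $\int_X\|v_n\|^2\,d\mu_n=W_2^2(\mu_n,\nu)\to W_2^2(\mu,\nu)=\int_X\|v\|^2\,d\mu$ by continuity of $W_2$, so narrow convergence comes with convergence of the second moments, i.e.\ strong convergence of $v_n$. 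The weak--strong convergence lemma in the Wasserstein setting \cite{ambrosio2005gradient} (the analogue of ``$\langle\xi_n,v_n\rangle\to\langle\xi,v\rangle$ when $\xi_n\rightharpoonup\xi$ and $v_n\to v$ in a fixed Hilbert space'') then yields the displayed limit.

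Assembling the two displays as in the first paragraph produces the subgradient inequality for $\xi$ at $\mu$, and Theorem~\ref{thm:characFrechet} then gives $\xi\in\partial\phi(\mu)$. The two places where I anticipate genuine technical work are: (i) upgrading \eqref{eq:weakconvergence}--\eqref{eq:itemsup} to narrow convergence of the vector measures $\xi_n\mu_n$, which calls for a uniform-integrability argument on integrands of linear growth in $\xi$; and (ii) the weak--strong pairing lemma itself, whose proof exploits the \emph{strong} convergence of $v_n$ precisely to compensate for the fact that $\|\xi_n\|^2$ is only uniformly bounded — not equi-integrable — under $\mu_n$, so that a direct narrow-limit-of-the-joint-plan argument applied to the (continuous but unbounded) bilinear integrand $\langle w,\,y-x\rangle$ would fail.
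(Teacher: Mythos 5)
The paper does not actually prove this statement---it is quoted as Lemma~10.1.3 of Ambrosio--Gigli--Savar\'e---and your argument is a faithful reconstruction of the proof given there: pass to the limit in the convexity inequality $\phi(\nu)\ge\phi(\mu_n)+\int_X\langle\xi_n(x),T_{\mu_n}^{\nu}(x)-x\rangle\,d\mu_n(x)$ with $\nu$ fixed, pairing the weak convergence of $\xi_n$ supplied by \eqref{eq:itemsup}--\eqref{eq:weakconvergence} against the strong convergence of the displacements $T_{\mu_n}^{\nu}-I$ obtained from stability of optimal plans together with $W_2^2(\mu_n,\nu)\to W_2^2(\mu,\nu)$. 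The only caveat---which you flag yourself---is that identifying the limit plan with $(I,T_{\mu}^{\nu})_{\#}\mu$ and reading the limiting inequality as membership in $\partial\phi(\mu)$ uses absolute continuity of $\mu$, which the stated hypotheses ($\mu\in\dom(\phi)$ only) do not literally supply, though it is automatic in the paper's application since there $\dom(\mathscr H)\subset\mathcal{P}_{2,\abs}(X)$.
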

As a side note, we need the notion of weak convergence in the above theorem because -- unlike subdifferentials in flat Euclidean space -- each $\xi_n$ lives in its own $L^2(X,X,\mu_n)$ space. 

Back to our proof, for item \eqref{eq:itemsup}, we show that
    \begin{align}
    \label{eq:boundedT}
        \sup_{n \in \mathbb{N}} \int_X{\left \Vert {T_{\mu_{n}}^{\nu_{n}}(x) - x}\right\Vert^2} d\mu_{n}(x) < +\infty.
    \end{align}

    We proceed as follows to prove \eqref{eq:boundedT}. 
    We first show that $\sup_{n \in \mathbb{N}}{\mathfrak m_2(\mu_{n})} < +\infty$. By contradiction, by assuming $\sup_{n \in \mathbb{N}}{\mathfrak m_2(\mu_n)} = +\infty$, we can extract a subsequence $\{\mu_{n_k}\}_{k \in \mathbb{N}}$ such that 
        \begin{align}
        \label{eq:m2muynkinft}
        \lim_{k \to \infty}{\mathfrak m_2(\mu_{n_k})} = +\infty.
    \end{align}
By compactness assumption, there further exists a subsequence $\{\mu_{n_{k_i}}\}_{i \in \mathbb{N}}$ such that $\mu_{n_{k_i}}$ converges (in Wasserstein metric) to some $\mu^{**} \in \mathcal{P}_2(X)$, implying that $\lim_{i \to \infty}{\mathfrak m_2}(\mu_{n_{k_i}}) = {\mathfrak m}_2(\mu^{**})$, which contradicts  \eqref{eq:m2muynkinft}.
    Therefore, $\sup_{n \in \mathbb{N}}{\mathfrak m_2(\mu_n)} < +\infty$. We next show that $\sup_{n \in \mathbb{N}}{\mathfrak m_2(\nu_n)} < +\infty$. Indeed, as $\Vert S \Vert^2$ has quadratic growth,
    \begin{align*}
        \Vert S(x) \Vert^2 \leq c(\Vert x \Vert^2 + 1)
    \end{align*}
    for some $c>0$. So
    \begin{align*}
        \mathfrak m_2(\nu_{n+1}) &= \int_X{\Vert x \Vert^2} d\nu_{n+1}(x)\\
        &= \int_X{\Vert x \Vert^2} d(I + \eta S)_{\#}\mu_n(x)\\
        &= \int_X{\Vert x + \eta S(x) \Vert^2} d \mu_n(x)\\
        &\leq 2 \int_X{\Vert x \Vert^2} d\mu_n(x) + 2\eta^2 \int_X{\Vert S(x) \Vert^2} d\mu_n(x)\\
        &\leq (2 + 2c\eta^2) \mathfrak m_2(\mu_n) + 2\eta^2c,
    \end{align*}
    implying that $\sup_{n \in \mathbb{N}}{\mathfrak m_2(\nu_n)} < +\infty.$ This in conjunction with $G$ having quadratic growth implies that $\sup_{n \in \mathbb{N}}{\vert \mathcal{E}_G(\nu_n) \vert} < +\infty$. Furthermore, $\inf_{n}{(\mathcal{E}_G + \mathscr{H})(\mu_{n})} > -\infty$ otherwise by lower semicontinuity of $\mathscr H$ and compactness of $\{\mu_n\}_{n \in \mathbb{N}}$ we get a contradiction.

    Now, as $\eta^{-1}(T_{\mu_{n+1}}^{\nu_{n+1}}-I) \in \partial\left( \mathcal{E}_G + \mathscr{H} \right)(\mu_{n+1})$ and $\mathcal{E}_G + \mathscr{H}$ is geodesically convex, by applying Thm. \ref{thm:characFrechet}, it holds
\begin{align}
\label{eq:imeineqii}
    (\mathcal{E}_G + \mathscr{H})(\nu_{n+1}) \geq (\mathcal{E}_G + \mathscr{H})(\mu_{n+1}) + \dfrac{1}{\eta} \int_X{\Vert T_{\mu_{n+1}}^{\nu_{n+1}}(x)-x\Vert^2} d\mu_{n+1}(x).
\end{align}
The finiteness as in \eqref{eq:boundedT} then follows from $\sup_{n \in \mathbb{N}}{\vert \mathcal{E}_G(\nu_n) \vert} < +\infty, \sup_{n \in \mathbb{N}}-(\mathcal{E}_G+\mathscr H)(\mu_n)<+\infty$ as proved and $\sup_{n \in \mathbb{N}}\mathscr{H}(\nu_n) < +\infty$ as assumed.


    We next prove that there is a subsequence of $\{\mu_{n_k}\}_{k \in \mathbb{N}}$ such that
    \begin{align}
    \label{eq:toproveweakconv}
        T^{\nu_{n_{k_j}+1}}_{\mu_{n_{k_j}+1}} -I \to T_{\mu^*}^{\nu^*} -I\text{ {weakly}.}
    \end{align}
We consider the sequence of optimal plans between $\mu_{n}$ and $\nu_{n}$ as follows
\begin{align*}
    \rho_{{n}} = (I,T_{\mu_{n}}^{\nu_{n}})_{\#}\mu_{{n}}, \quad \forall n \in \mathbb{N}.
\end{align*}

We observe that 
\begin{align*}
    \mathfrak m_2(\rho_n) = \int_{X\times X}{(\Vert x \Vert^2 + \Vert y \Vert^2)} d\rho_n(x,y) = \int_X {\Vert x \Vert^2}d\mu_n(x) + \int_X{\Vert y\Vert^2}d\nu_n(y) < +\infty,
\end{align*}
so $\rho_n \in \mathcal{P}_2(X \times X)$ for all $n \in \mathbb{N}$.
Since $\sup_{n \in \mathbb{N}}{\mathfrak m_2(\nu_n)} < +\infty$ as proved, and as a Wasserstein ball is relatively compact under narrow topology, $\{\nu_n\}_{n \in \mathbb{N}}$ is relatively compact under narrow topology. The same property holds for $\{\mu_n\}_{n \in \mathbb{N}}$. According to Prokhorov \cite[Theorem 1.3]{ambrosio2013user}, $\{\mu_n\}_{n \in \mathbb{N}}$ and $\{\nu_n\}_{n \in \mathbb{N}}$ are tight. By \cite[Remark 1.4]{ambrosio2013user}, $\{\rho_n\}_{n \in \mathbb{N}}$ is also tight, hence relatively compact under narrow topology in $\mathcal{P}_2(X \times X)$.
Consequently, $\{\rho_{n_k+1}\}_{k \in \mathbb{N}}$ admits a subsequence converging narrowly to some $\rho^* \in \mathcal{P}(X \times X)$. Let's say
\begin{align}
\label{eq:narrowconofrho}
    \rho_{n_{k_i}+1} \xrightarrow{\na} \rho^* \text{ as } i \to \infty.
\end{align}
We can see that ${\proj_1}_{\#} \rho^* = \mu^*, {\proj_2}_{\#} \rho^* = \nu^*$.
We further show that $\rho^* \in \mathcal{P}_2(X \times X)$, or equivalently, 
\begin{align}
\label{eq:sttoprove}
     \int_{X \times X}{(\Vert x \Vert^2 + \Vert y \Vert^2)} d \rho^*(x,y) < +\infty.
\end{align}
Let $C \in \mathbb{N}$, thanks to the narrow convergence in \eqref{eq:narrowconofrho}, we have
\begin{align*}
    \int_{X \times X}{\min\{\Vert x \Vert^2 + \Vert y \Vert^2, C}\} d \rho_{n_{k_i}+1}(x,y) \to \int_{X \times X}{\min\{\Vert x \Vert^2 + \Vert y \Vert^2, C}\} d \rho^*(x,y).
\end{align*}
Furthermore,
\begin{align*}
    \int_{X \times X}{\min\{\Vert x \Vert^2 + \Vert y \Vert^2, C}\} d \rho_{n_{k_i}+1}(x,y) \leq \sup_{n \in \mathbb{N}} \mathfrak m_2(\mu_n) + \sup_{n \in \mathbb{N}} \mathfrak m_2(\nu_n) := M < + \infty.
\end{align*}
Passing to the limit, we get
\begin{align*}
    \int_{X \times X}{\min\{\Vert x \Vert^2 + \Vert y \Vert^2, C}\} d \rho^*(x,y) \leq M
\end{align*}
for all $C \in \mathbb{N}$. Sending $C$ to $\infty$ and applying Monotone Convergence Theorem we derive \eqref{eq:sttoprove}.

Back to the main proof, since $\{\rho_{n_{k_i}+1}\}_{i \in \mathbb{N}}$ is a sequence of optimal plans, its limit, $\rho^*$ is also optimal \cite[Proposition 2.5]{ambrosio2013user}. Therefore,
\begin{align*}
    \rho^* = (I,T_{\mu^*}^{\nu^*})_{\#} \mu^*.
\end{align*}

Moreover, as
\begin{align*}
\mathfrak m_2( \rho_{n_{k_i}+1}) 
= \mathfrak m_2(\mu_{n_{k_i}+1}) + \mathfrak m_2(\nu_{n_{k_i}+1}) \to \mathfrak m_2(\mu^*) + \mathfrak m_2(\nu^*)=\mathfrak m_2(\rho^*),
\end{align*}
we have
\begin{align*}
    \rho_{n_{k_i}+1} \xrightarrow{\wass} \rho^* \text{ as } i \to \infty.
\end{align*}

Now let's take any test function $\zeta \in C_c^{\infty}(X)$, we show 
\begin{align}
\label{eq:gradweakconverge}
    \lim_{j \to \infty}\int_X{\zeta(x) T_{\mu_{n_{k_j}+1}}^{\nu_{n_{k_j}+1}}(x)} d\mu_{n_{k_j}+1}(x) = \int_X{\zeta(x) T_{\mu^*}^{\nu^*}(x)} d\mu^*(x).
\end{align}
Indeed, $(x,y) \mapsto \zeta(x)\proj_i(y)$ where $\proj_i$ is the projection into the $i$-th coordinate is {{continuous}} and has quadratic growth since $\zeta(x)$ is bounded and $\proj_i(y)$ is linear.

Since $\rho_{n_{k_j}+1} \xrightarrow{\wass} \rho^*$, it holds: for each $i \in [d]$,
\begin{align*}
&\lim_{j \to \infty}\int_X{\zeta(x) \proj_i\left(T_{\mu_{n_{k_j}+1}}^{\nu_{n_{k_j}+1}}(x)\right)} d\mu_{n_{k_j}+1}(x) \\
&= \lim_{j \to \infty}\int_X {\zeta(x) \proj_i(y)} d (I,T_{\mu_{n_{k_j}+1}}^{\nu_{n_{k_j}+1}}) _{\#} \mu_{n_{k_j}+1}(x,y)\\
&=\lim_{j \to \infty}\int_X {\zeta(x) \proj_i(y)} d  \rho_{n_{k_j}+1}(x,y)\\
&=\int_X {\zeta(x) \proj_i(y)} d  \rho^*(x,y)\\
&= \int_X{\zeta(x)\proj_i(y)} d (I,T_{\mu^*}^{\nu^*}) _{\#} \mu^*(x,y)\\
&= \int_X{\zeta(x)\proj_i(T_{\mu^*}^{\nu^*}(x))} d \mu^*(x),
\end{align*}
so \eqref{eq:gradweakconverge} holds. Consequently, \eqref{eq:toproveweakconv} also holds by noticing that
\begin{align*}
    \int_X x \zeta(x) d \mu_{n_{k_j}+1}(x) \to \int_X{x \zeta(x)} d\mu^*(x) \text{ as } j \to \infty.
\end{align*}
By the closedness of subdifferential graph of  $\partial( \mathcal{E}_G+\mathscr{H})$ (Thm. \ref{thm:closesubdiff}), we obtain
\begin{align}
\label{eq:lastt}
    \dfrac{T_{\mu^*}^{\nu^*} - I}{\eta} \in \partial (\mathcal{E}_G+\mathscr{H})(\mu^*).
\end{align}
On the other hand, by the definition of $\nu^*$ in \eqref{eq:extra}, we get $T_{\mu^*}^{\nu^*} = I  +\eta S$. Together with \eqref{eq:lastt}, $S \in \partial (\mathcal{E}_G+\mathscr{H})(\mu^*)$. Noting that $S \in \partial \mathcal{E}_H(\mu^*)$, it holds $S \in \partial (\mathcal{E}_G+\mathscr{H})(\mu^*) \cap \partial \mathcal{E}_H(\mu^*)$ and we conclude $\mu^*$ is a critical point of $\mathcal{F}.$

\subsection{Proof of Theorem \ref{thm:gradientmapping}}
\label{subsect:proofgradmap}
We see that
\begin{align*}
    \Vert \mathcal{G}_{\eta}(\mu_n) \Vert_{L^2(X,X;\mu_n)}^2 &= \dfrac{1}{\eta^2}\int_X{\left\Vert x - T_{\mu_n}^{\JKO_{\eta(\mathcal{E}_G+\mathscr H)}((I+\eta S)_{\#}\mu_n)}(x) \right\Vert^2} d\mu_n (x)\\
    &= \dfrac{1}{\eta^2} W_2^2(\mu_n, \JKO_{\eta(\mathcal{E}_G+\mathscr H)}((I+\eta S)_{\#}\mu_n))\\
    &= \dfrac{1}{\eta^2} W_2^2(\mu_n,\mu_{n+1}).
\end{align*}
On the other hand, it follows from \eqref{eq:finitesum} of the proof of Thm. \ref{thm:asymptoticconvergence} that
\begin{align*}
    \min_{i =\overline{1,N}} W_2^2(\mu_n,\mu_{n+1}) = O(N^{-1}).
\end{align*}

\subsection{Proof of Theorem \ref{thm:frechetnonasymp}}
\label{subsect:prooffretnonasym}
$H$ has uniformly bounded Hessian, by \cite[Prop. 2.12]{lanzetti2022first}, $\mathcal{E}_H$ is Wasserstein differentiable and $\nabla_W \mathcal{E}_H(\mu) = \nabla H$ for all $\mu \in \mathcal{P}_2(X).$ According to Lem. \ref{lem:sumrule} and \eqref{eq:Tmun1nu},
\begin{align}
\label{eq:subFinclusion}
    \partial_F^- \mathcal{F}(\mu_{n+1}) = \partial(\mathcal{E}_G + \mathscr H)(\mu_{n+1}) - \nabla H \ni \dfrac{T_{\mu_{n+1}}^{\nu_{n+1}}-I}{\eta} - \nabla H.
\end{align}

We then have the following evaluations:
\begin{align}
\label{eq:disFreeval}
    \dist{(0,\partial_F^- \mathcal{F}(\mu_{n+1}))} &= \inf_{\xi \in \partial^-_F \mathcal{F}(\mu_{n+1})} \Vert \xi \Vert_{L^2(X,X,\mu_{n+1})}\notag\\
    &\leq \left \Vert \dfrac{T_{\mu_{n+1}}^{\nu_{n+1}}-I}{\eta} - \nabla H \right \Vert_{L^2(X,X,\mu_{n+1})}\notag\\
    &= \left(\int_X{\left \Vert \dfrac{T_{\mu_{n+1}}^{\nu_{n+1}}(x)-x}{\eta} - \nabla H(x) \right \Vert^2} d \mu_{n+1}(x)\right)^{\frac{1}{2}}\notag\\
    &= \dfrac{1}{\eta} \left( \int_X\Vert T_{\mu_{n+1}}^{\nu_{n+1}}(x) - x - \eta \nabla H(x)\Vert^2 d\mu_{n+1}(x) \right)^{\frac{1}{2}}.
\end{align}

By transfer lemma \ref{lem:transfer},
\begin{align}
\label{eq:TTTTss}
    &\int_X{\left\Vert T_{\mu_{n+1}}^{\nu_{n+1}}(x)- x - \eta \nabla H(x) \right\Vert^2}d\mu_{n+1}(x)\notag\\
    &=     \int_X{\left\Vert T_{\mu_{n+1}}^{\nu_{n+1}}(x) - x - \eta \nabla H(x) \right\Vert^2}d {T_{\nu_{n+1}}^{\mu_{n+1}}}_{\#}\nu_{n+1}(x)\notag\\
    &=  \int_X{\left\Vert T_{\mu_{n+1}}^{\nu_{n+1}} \circ T_{\nu_{n+1}}^{\mu_{n+1}} (x)- (I+\eta \nabla H) \circ T_{\nu_{n+1}}^{\mu_{n+1}}(x) \right\Vert^2}d\nu_{n+1}(x)\notag\\
    &=  \int_X{\left\Vert x- (I+\eta \nabla H) \circ T_{\nu_{n+1}}^{\mu_{n+1}}(x) \right\Vert^2}d\nu_{n+1}(x).
\end{align}

On the other hand, by using the trivial identity
\begin{align*}
    \nabla H = \dfrac{(I + \eta \nabla H) - I}{\eta}
\end{align*}
we compute,
\begin{align}
\label{eq:aaaabbbb}
    &\int_X{\Vert \nabla H(T_{\nu_{n+1}}^{\mu_n}(x)) - \nabla H(T_{\nu_{n+1}}^{\mu_{n+1}}(x)) \Vert^2} d\nu_{n+1}(x)\notag\\
    &= \int_X{ \left \Vert\dfrac{(I+\eta \nabla H) \circ T_{\nu_{n+1}}^{\mu_n}(x)-T_{\nu_{n+1}}^{\mu_n}(x)}{\eta} - \dfrac{(I+\eta \nabla H) \circ T_{\nu_{n+1}}^{\mu_{n+1}}(x)-T_{\nu_{n+1}}^{\mu_{n+1}}(x)}{\eta} \right \Vert^2} d\nu_{n+1}(x)\notag\\
    &= \dfrac{1}{\eta^2}\int_X{ \left \Vert x-T_{\nu_{n+1}}^{\mu_n}(x) - {(I+\eta \nabla H) \circ T_{\nu_{n+1}}^{\mu_{n+1}}(x)+T_{\nu_{n+1}}^{\mu_{n+1}}(x)} \right \Vert^2} d\nu_{n+1}(x),
\end{align}
where the last equality uses $(I+\eta \nabla H) \circ T_{\nu_{n+1}}^{\mu_n} = I$ $\nu_{n+1}$-a.e.

The Hessian of $H$ is bounded uniformly, $\nabla H$ is Lipschitz, let's say $\Vert \nabla H(x) - \nabla H(y) \Vert \leq L_H \Vert x-y \Vert$ for all $x,y \in X.$
We continue evaluating \eqref{eq:TTTTss} as follows
\begin{align}
\label{eq:somthingelsewww}
    &\int_X{\Vert x - (I+\eta \nabla H) \circ T_{\nu_{n+1}}^{\mu_{n+1}}(x) \Vert^2}d\nu_{n+1}(x)\notag\\
    &\leq \int_X{\left( \Vert x-T_{\nu_{n+1}}^{\mu_n}(x) - {(I+\eta \nabla H) \circ T_{\nu_{n+1}}^{\mu_{n+1}}(x)+T_{\nu_{n+1}}^{\mu_{n+1}}(x)} \Vert + \Vert T_{\nu_{n+1}}^{\mu_n}(x)-T_{\nu_{n+1}}^{\mu_{n+1}}(x) \Vert\right)^2} d\nu_{n+1}(x)\notag\\
    &\leq 2 \int_X{ \left \Vert x-T_{\nu_{n+1}}^{\mu_n}(x) - {(I+\eta \nabla H) \circ T_{\nu_{n+1}}^{\mu_{n+1}}(x)+T_{\nu_{n+1}}^{\mu_{n+1}}(x)} \right \Vert^2} d\nu_{n+1}(x) \notag\\
    &\quad+ 2 \int_X{ \Vert T_{\nu_{n+1}}^{\mu_n}(x)-T_{\nu_{n+1}}^{\mu_{n+1}}(x) \Vert^2} d\nu_{n+1}(x)\notag\\
    &= 2\eta^2 \int_X{\Vert \nabla H(T_{\nu_{n+1}}^{\mu_n}(x)) - \nabla H(T_{\nu_{n+1}}^{\mu_{n+1}}(x)) \Vert^2} d\nu_{n+1}(x) + 2 \int_X{ \Vert T_{\nu_{n+1}}^{\mu_n}(x)-T_{\nu_{n+1}}^{\mu_{n+1}}(x) \Vert^2} d\nu_{n+1}(x)\notag\\
    &\leq 2(\eta^2 L_H^2 + 1) \int_X{ \Vert T_{\nu_{n+1}}^{\mu_n}(x)-T_{\nu_{n+1}}^{\mu_{n+1}}(x) \Vert^2} d\nu_{n+1}(x)
\end{align}
where the third equality uses \eqref{eq:aaaabbbb}.

From \eqref{eq:disFreeval}, \eqref{eq:TTTTss}, and \eqref{eq:somthingelsewww}, we derive
\begin{align}
    \dist{(0,\partial_F^- \mathcal{F}(\mu_{n+1}))} \leq  \dfrac{\sqrt{2(\eta^2 L_H^2 + 1)}}{\eta} \left(\int_X{ \Vert T_{\nu_{n+1}}^{\mu_n}(x)-T_{\nu_{n+1}}^{\mu_{n+1}}(x) \Vert^2} d\nu_{n+1}(x) \right)^{\frac{1}{2}}.
\end{align}

On the other hand, by telescoping Lem. \ref{lem:descent}, we obtain
\begin{align*}
    \sum_{n=1}^{\infty}{\int_X{ \Vert T_{\nu_{n+1}}^{\mu_n}(x)-T_{\nu_{n+1}}^{\mu_{n+1}}(x) \Vert^2} d\nu_{n+1}(x)} < +\infty.
\end{align*}
Therefore,
\begin{align*}
    &\sum_{n=0}^{N-1}{\dist{(0,\partial^F \mathcal{F}(\mu_{n+1}))}} \notag\\
    &\leq \dfrac{\sqrt{2(\eta^2 L_H^2 + 1)}}{\eta} \sum_{n=0}^{N-1}{\left(\int_X{ \Vert T_{\nu_{n+1}}^{\mu_n}(x)-T_{\nu_{n+1}}^{\mu_{n+1}}(x) \Vert^2} d\nu_{n+1}(x)\right)^{\frac{1}{2}}}\\
    &\leq \dfrac{\sqrt{2(\eta^2 L_H^2 + 1)}}{\eta} \left( N \left(\sum_{n=0}^{N-1}{\int_X{ \Vert T_{\nu_{n+1}}^{\mu_n}(x)-T_{\nu_{n+1}}^{\mu_{n+1}}(x) \Vert^2} d\nu_{n+1}(x)} \right) \right)^{\frac{1}{2}}\\
    &\leq \dfrac{\sqrt{2(\eta^2 L_H^2 + 1)N}}{\eta}  \left(\sum_{n=0}^{+\infty}{\int_X{ \Vert T_{\nu_{n+1}}^{\mu_n}(x)-T_{\nu_{n+1}}^{\mu_{n+1}}(x) \Vert^2} d\nu_{n+1}(x)} \right)^{\frac{1}{2}}
\end{align*}
We derive
\begin{align*}
    \min_{n=\overline{1,N}}{\dist{(0,\partial^F \mathcal{F}(\mu_{n}))}} = O\left( \dfrac{1}{\sqrt{N}} \right).
\end{align*}

\subsection{Proof of Theorem \ref{thm:objKL}}
\label{subsect:proofobjKL}
\textbf{Convergence in terms of objective values}

Since $H \in C^2(X)$ whose Hessian is uniformly bounded, recall from \eqref{eq:subFinclusion} that
\begin{align*}
\partial_F^- \mathcal{F}(\mu_{n+1}) = \partial(\mathcal{E}_G + \mathscr H)(\mu_{n+1}) - \nabla H \ni \dfrac{T_{\mu_{n+1}}^{\nu_{n+1}}-I}{\eta} - \nabla H.
\end{align*}
Since $\mathcal{F}(\mu_0) - \mathcal{F}^*< r_0$ and the sequence $\{\mathcal{F}(\mu_n)\}_{n \in \mathbb{N}}$ is not increasing (Lem. \ref{lem:descent}), $\mathcal{F}(\mu_n)-\mathcal{F}^* <r_0$ for all $n \in \mathbb{N}$.
\L ojasiewicz condition implies
\begin{align}
\label{eq:noidea}
    c(\mathcal{F}(\mu_{n+1})-\mathcal{F}^*)^{\theta} &\leq \left\Vert \dfrac{T_{\mu_{n+1}}^{\nu_{n+1}}-I}{\eta} - \nabla H \right\Vert_{L^2(X,X,\mu_{n+1})}\notag\\
    &= \left( \int_X{\left\Vert \dfrac{T_{\mu_{n+1}}^{\nu_{n+1}}(x)-x}{\eta} - \nabla H(x) \right\Vert^2}d \mu_{n+1}(x)\right)^{\frac{1}{2}}\notag\\
    &\leq  \dfrac{\sqrt{2(\eta^2 L_H^2 + 1)}}{\eta} \left(\int_X{ \Vert T_{\nu_{n+1}}^{\mu_n}(x)-T_{\nu_{n+1}}^{\mu_{n+1}}(x) \Vert^2} d\nu_{n+1}(x) \right)^{\frac{1}{2}}
\end{align}
where the last inequality follows from \eqref{eq:TTTTss} and \eqref{eq:somthingelsewww} and $L_H$ is the Lipschitz constant of $\nabla H$. Combining with Lem. \ref{lem:descent}, we derive
\begin{align*}
    c\left( \mathcal{F}(\mu_{n+1}) - \mathcal{F}^* \right)^{\theta} \leq \dfrac{\sqrt{2(\eta^2 L_H^2+1)}}{\sqrt{\eta}} \left(\mathcal{F}(\mu_n) - \mathcal{F}(\mu_{n+1}) \right)^{\frac{1}{2}}
\end{align*}
or 
\begin{align}
\label{eq:tosuse}
    \left( \mathcal{F}(\mu_{n+1}) - \mathcal{F}^* \right)^{2\theta} \leq \dfrac{{2(\eta^2 L_H^2+1)}}{c^2{\eta}} \left((\mathcal{F}(\mu_n) -\mathcal{F}^*) -(\mathcal{F}(\mu_{n+1})-\mathcal{F}^*) \right).
\end{align}
We then use the following lemma \cite[Lem. 4]{zhang2024boosted}.
\begin{lemma}
\label{lem:skp1}
 Let $\{s_k\}_{k \in \mathbb{N}}$ be a {nonincreasing} and nonnegative real sequence. Assume that there exist $\alpha \geq 0$ and $\beta>0$ such that for all sufficiently large $k$,
\begin{align}
\label{eq:descentsk}
    s_{k+1}^{\alpha} \leq \beta(s_k-s_{k+1}).
\end{align}
Then
\begin{itemize}
    \item[(i)] if $\alpha=0$,  the sequence $\{s_k\}_{k \in \mathbb{N}}$ converges to $0$ in a finite number of steps;
    \item[(ii)] if $\alpha \in (0,1]$, the sequence $\{s_k\}_{k \in \mathbb{N}}$ converges linearly to $0$ with rate $\frac{\beta}{\beta+1}$;
    \item[(iii)] if $\alpha>1$, the sequence $\{s_k\}_{k \in \mathbb{N}}$ converges sublinearly to $0$, i.e., there exists $\tau>0$:
    \begin{align*}
        s_k \leq \tau k^{\frac{-1}{\alpha-1}}
    \end{align*}
    for sufficiently large $k$.
\end{itemize}
\end{lemma}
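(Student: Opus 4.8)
The plan is to first show $s_k\to 0$ in every regime, and then pin down the rate separately in the three cases, with $\alpha>1$ being where the real work sits. First I would note that $\{s_k\}$ is nonincreasing and bounded below by $0$, hence converges to some $L\geq 0$, so $s_k-s_{k+1}\to 0$. If $L>0$, then for $k$ large the left-hand side of \eqref{eq:descentsk} is bounded below by a fixed positive constant ($1$ if $\alpha=0$, since $s_{k+1}\to L>0$ forces $s_{k+1}>0$ eventually, and $L^{\alpha}$ if $\alpha>0$), while the right-hand side $\beta(s_k-s_{k+1})\to 0$, a contradiction; hence $L=0$. For case (i), $\alpha=0$: whenever $s_{k+1}>0$ (and $k$ large), \eqref{eq:descentsk} reads $1\leq\beta(s_k-s_{k+1})$, so $s_k$ drops by at least $1/\beta$; as $s_k$ is nonnegative and nonincreasing this can occur only finitely often, so $s_{k^{\ast}}=0$ for some $k^{\ast}$ and then $s_k=0$ for all $k\geq k^{\ast}$.

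For case (ii), $\alpha\in(0,1]$: using $s_k\to 0$, fix $K$ with $s_{k+1}\leq 1$ for $k\geq K$; then $s_{k+1}\leq s_{k+1}^{\alpha}$ (since $x\leq x^{\alpha}$ on $[0,1]$ when $\alpha\leq 1$), so \eqref{eq:descentsk} gives $s_{k+1}\leq\beta(s_k-s_{k+1})$, i.e.\ $(1+\beta)s_{k+1}\leq\beta s_k$; iterating from $K$ gives $s_k\leq(\beta/(\beta+1))^{k-K}s_K$, the stated linear rate $\beta/(\beta+1)$.

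For case (iii), $\alpha>1$ (the hard part), I would work with the auxiliary sequence $t_k:=s_k^{1-\alpha}$, which is nondecreasing (as $s_k$ is nonincreasing and $1-\alpha<0$) and tends to $+\infty$ since $s_k\to 0^{+}$; we may assume $s_k>0$ for all $k$, otherwise finite termination makes the bound trivial. Fix $q:=2^{-1/\alpha}\in(0,1)$ and, for each large $k$, distinguish two cases. If $s_{k+1}\leq q s_k$, then $s_{k+1}^{1-\alpha}\geq q^{1-\alpha}s_k^{1-\alpha}$, hence $t_{k+1}-t_k\geq(q^{1-\alpha}-1)s_k^{1-\alpha}\geq(q^{1-\alpha}-1)t_K>0$ (using $q^{1-\alpha}>1$ and monotonicity of $t_k$). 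If instead $s_{k+1}>q s_k$, then $s_{k+1}^{\alpha}>\tfrac12 s_k^{\alpha}$, so \eqref{eq:descentsk} gives $s_k-s_{k+1}\geq\tfrac1\beta s_{k+1}^{\alpha}>\tfrac{1}{2\beta}s_k^{\alpha}$, and combining with
\[
  t_{k+1}-t_k \;=\; (\alpha-1)\int_{s_{k+1}}^{s_k} u^{-\alpha}\,\mathrm{d}u \;\geq\; (\alpha-1)\,s_k^{-\alpha}\,(s_k-s_{k+1})
\]
(valid since $u\leq s_k$ and $-\alpha<0$) yields $t_{k+1}-t_k\geq\tfrac{\alpha-1}{2\beta}$. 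Letting $c>0$ be the smaller of the two constants and telescoping from $K$, $s_k^{1-\alpha}=t_k\geq c(k-K)$; since $1-\alpha<0$ this rearranges to $s_k\leq(c(k-K))^{-1/(\alpha-1)}$, and for $k\geq 2K$ we get $s_k\leq\tau k^{-1/(\alpha-1)}$ with $\tau=(c/2)^{-1/(\alpha-1)}$.

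The main obstacle is case (iii): one has to guess the right potential ($s_k^{1-\alpha}$), organise the dichotomy according to whether the per-step decrement is ``geometric'' ($s_{k+1}\leq q s_k$) or ``polynomial'', and estimate $\int_{s_{k+1}}^{s_k}u^{-\alpha}\,\mathrm{d}u$ on the correct side via $u\leq s_k$. The residual bookkeeping --- the index beyond which \eqref{eq:descentsk} holds, and the possibility that $s_k$ is eventually exactly $0$ --- is routine.
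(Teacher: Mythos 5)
Your proof is correct. Note, however, that the paper does not actually prove this lemma: it imports it wholesale from \cite[Lem.~4]{zhang2024boosted}, adding only the one-line observation that the cited lemma's extra hypothesis $s_k\to 0$ is redundant because it already follows from \eqref{eq:descentsk} together with nonnegativity and monotonicity --- which is exactly the content of your first paragraph. So your write-up supplies a self-contained argument where the paper offers a citation. Your cases (i) and (ii) are the standard short computations, and your case (iii) is the classical Attouch--Bolte-style argument: pass to the potential $t_k=s_k^{1-\alpha}$, split each step according to whether the decrease is geometric ($s_{k+1}\le q s_k$) or not, bound $t_{k+1}-t_k$ from below by a uniform constant in either branch (via $q^{1-\alpha}>1$ in the first, and via $\int_{s_{k+1}}^{s_k}u^{-\alpha}\,du\ge s_k^{-\alpha}(s_k-s_{k+1})$ combined with $s_{k+1}^{\alpha}>\tfrac12 s_k^{\alpha}$ in the second), and telescope. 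All the inequalities check out, including the sign flips from $1-\alpha<0$ and the final passage from $k-K\ge k/2$ to the stated $\tau k^{-1/(\alpha-1)}$ bound; the only convention you implicitly adopt is $s_{k+1}^{0}=1$ when $s_{k+1}>0$ in case (i), which is the intended reading since you only invoke \eqref{eq:descentsk} there when $s_{k+1}>0$.
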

Compared to \cite[Lem. 4]{zhang2024boosted}, we have dropped the assumption $s_k \to 0$ in Lem. \ref{lem:skp1} because this assumption is vacuous, i.e., it can be induced by \eqref{eq:descentsk} and nonnegativity of $\{s_k\}_{k \in \mathbb{N}}$.

We now apply Lem. \ref{lem:skp1} for $s_k = \mathcal{F}(\mu_k)-\mathcal{F}^*$ using \eqref{eq:tosuse} to derive the followings
\begin{itemize}
    \item[(i)] if $\theta=0$, $\mathcal{F}(\mu_n)-\mathcal{F}^*$ converges to $0$ in a finite number of steps;
    \item[(ii)] if $\theta \in (0, 1/2]$, $\mathcal{F}(\mu_n)-\mathcal{F}^*$ converges to $0$ linearly (exponentially fast) with rate
    \begin{align*}
        \mathcal{F}(\mu_n)-\mathcal{F}^* = O\left(\left(\dfrac{M}{M+1}\right)^n\right) \text{ where } M=\dfrac{{2(\eta^2 L_H^2+1)}}{c^2{\eta}};
    \end{align*}
    \item[(iii)] if $\theta \in (1/2,1)$, $\mathcal{F}(\mu_n)-\mathcal{F}^*$ converges sublinearly to $0$, i.e.,
    \begin{align*}
        \mathcal{F}(\mu_n)-\mathcal{F}^* = O\left( n^{-\frac{1}{2\theta-1}} \right).
    \end{align*}
\end{itemize}

\subsection{Proof of Theorem \ref{thm:wassKL}}

\textbf{Cauchy sequence.}
\label{subsect:proofwassKL}

By replacing $n:=n-1$ in \eqref{eq:noidea} and rearranging
\begin{align}
\label{eq:noidea2}
    1 \leq \dfrac{\sqrt{2(\eta^2 L_H^2+1)}}{c\eta} \left(\int_X{ \Vert T_{\nu_{n}}^{\mu_{n-1}}(x)-T_{\nu_{n}}^{\mu_{n}}(x) \Vert^2} d\nu_{n}(x) \right)^{\frac{1}{2}} \left( \mathcal{F}(\mu_{n}) - \mathcal{F}^* \right)^{-\theta}.
\end{align}
It follows from Lem. \ref{lem:descent} and \eqref{eq:noidea2} that
\begin{align}
\label{eq:noidea2x}
   &\int_X{\Vert T_{\nu_{n+1}}^{\mu_n}(x) - T_{\nu_{n+1}}^{\mu_{n+1}}(x) \Vert^2}d\nu_{n+1}(x) \leq \eta(\mathcal{F}(\mu_n) - \mathcal{F}(\mu_{n+1}))\notag\\
   &\leq \dfrac{\sqrt{2(\eta^2 L_H^2+1)}}{c} \left(\int_X{ \Vert T_{\nu_{n}}^{\mu_{n-1}}(x)-T_{\nu_{n}}^{\mu_{n}}(x) \Vert^2} d\nu_{n}(x) \right)^{\frac{1}{2}} \left( \mathcal{F}(\mu_{n}) - \mathcal{F}^* \right)^{-\theta}(\mathcal{F}(\mu_n) - \mathcal{F}(\mu_{n+1})). 
\end{align}
Since the function $s: \mathbb{R}^+ \to \mathbb{R}$, $s(t)=t^{1-\theta}$ is concave if $\theta \in [0,1)$, tangent inequality holds
\begin{align*}
    s'(a)(a-b) \leq s(a)-s(b).
\end{align*}
Note that $s'(t) = (1-\theta) t^{-\theta}$, the above inequality further implies
\begin{align}
\label{eq:noidea3}
    (1-\theta) \left( \mathcal{F}(\mu_{n}) - \mathcal{F}^* \right)^{-\theta}(\mathcal{F}(\mu_n) - \mathcal{F}(\mu_{n+1})) \leq (\mathcal{F}(\mu_n)-\mathcal{F}^*)^{1-\theta} - (\mathcal{F}(\mu_{n+1})-\mathcal{F}^*)^{1-\theta}.
\end{align}
From \eqref{eq:noidea2x} and \eqref{eq:noidea3}
\begin{align*}
   \int_X{\Vert T_{\nu_{n+1}}^{\mu_n}(x) - T_{\nu_{n+1}}^{\mu_{n+1}}(x) \Vert^2}d\nu_{n+1}(x) \leq &\dfrac{\sqrt{2(\eta^2 L_H^2+1)}}{(1-\theta) c} \left(\int_X{ \Vert T_{\nu_{n}}^{\mu_{n-1}}(x)-T_{\nu_{n}}^{\mu_{n}}(x) \Vert^2} d\nu_{n}(x) \right)^{\frac{1}{2}}\\ &\times \left[(\mathcal{F}(\mu_n)-\mathcal{F}^*)^{1-\theta} - (\mathcal{F}(\mu_{n+1})-\mathcal{F}^*)^{1-\theta} \right]
\end{align*}
or equivalently,
\begin{align}
\label{eq:totelesc}
    \dfrac{r_n}{\sqrt{r_{n-1}}}&:=\dfrac{\int_X{\Vert T_{\nu_{n+1}}^{\mu_n}(x) - T_{\nu_{n+1}}^{\mu_{n+1}}(x) \Vert^2}d\nu_{n+1}(x)}{\left(\int_X{ \Vert T_{\nu_{n}}^{\mu_{n-1}}(x)-T_{\nu_{n}}^{\mu_{n}}(x) \Vert^2} d\nu_{n}(x) \right)^{\frac{1}{2}}} \notag\\
    &\leq \dfrac{\sqrt{2(\eta^2 L_H^2+1)}}{(1-\theta) c}\left[(\mathcal{F}(\mu_n)-\mathcal{F}^*)^{1-\theta} - (\mathcal{F}(\mu_{n+1})-\mathcal{F}^*)^{1-\theta} \right]
\end{align}
where $r_n := \int_X{\Vert T_{\nu_{n+1}}^{\mu_n}(x) - T_{\nu_{n+1}}^{\mu_{n+1}}(x) \Vert^2}d\nu_{n+1}(x).$

By telescoping \eqref{eq:totelesc} from $n=1$ to $+\infty$ we obtain
\begin{align*}
    \sum_{n=1}^{+\infty}{\dfrac{r_n}{\sqrt{r_{n-1}}}} < +\infty.
\end{align*}
On the other hand
\begin{align}
\label{eq:outofidea2}
    \dfrac{r_n}{\sqrt{r_{n-1}}} + \sqrt{r_{n-1}} &\geq 2 \sqrt{r_n},
\end{align}
we derive $\sum_{n=0}^{+\infty}{\sqrt{r_n}} < +\infty$. 
From \eqref{eq:TTcompose}, \eqref{eq:TTTopt} of the proof of Thm. \ref{thm:asymptoticconvergence}, $r_n \geq W_2^2(\mu_n,\mu_{n+1})$, we obtain
\begin{align*}
\sum_{n=0}^{+\infty}{W_2(\mu_n,\mu_{n+1})} <+\infty.
\end{align*}
or, in other words, $\{\mu_n\}_{n \in \mathbb{N}}$ is a Cauchy sequence under Wasserstein topology. The Wasserstein space $(\mathcal{P}_2(X),W_2)$ is \emph{complete} \cite[Thm. 2.2]{ambrosio2006gradient}, every Cauchy sequence is convergent, i.e., there exists $\mu^* \in \mathcal{P}_2(X)$ such that $\mu_n \xrightarrow{\wass} \mu^*.$

We prove that $\mu^*$ is actually an optimal solution of $\mathcal{F}$ by showing $\mathcal{F}(\mu^*) = \mathcal{F}^*$. Indeed, firstly, as $G$ and $H$ have quadratic growth, it holds
\begin{align*}
    \mathcal{E}_G(\mu_n) \to \mathcal{E}_G(\mu^*),     \mathcal{E}_H(\mu_n) \to \mathcal{E}_H(\mu^*).
\end{align*}
On the other hand,
\begin{align*}
    \mathcal{F}^* = \lim_{n \to \infty}{\mathcal{F}(\mu_n)} &= \liminf_{n \to \infty} \mathcal{F}(\mu_n)= \liminf_{n \to \infty}{\mathscr H}(\mu_n) + \mathcal{E}_G(\mu^*) - \mathcal{E}_H(\mu^*) \\
    &\geq {\mathscr H}(\mu^*) + \mathcal{E}_G(\mu^*) - \mathcal{E}_H(\mu^*) = \mathcal{F}(\mu^*)
\end{align*}
since $\mathscr{H}$ is l.s.c. The equality has to occur, i.e., $\mathcal{F}^* = \mathcal{F}(\mu^*)$, due to the optimality of $\mathcal{F}^*$. 

\textbf{Convergence rate of $\{\mu_n\}_{n \in \mathbb{N}}$.}

\emph{(i) If $\theta=0$}

From item (i) of Thm. \ref{thm:objKL}, there exists $n_0 \in \mathbb{N}$ such that $\mathcal{F}(\mu_n) = \mathcal{F}^*$ for all $n \geq n_0$. It then follows from \eqref{eq:totelescopeF} that $\mu_{n_0}=\mu_{n_0+1} = \mu_{n_0+2}=\ldots$, which further implies that $\mu_n = \mu^*$ for all $n \geq n_0$.

\emph{(ii) If $\theta \in (0,1/2]$}

Let $s_i = \sum_{n=i}^{\infty}\sqrt{r_{n}}$. We have
\begin{align}
\label{eq:wouldhave}
    s_i \geq \sum_{n=i}^{\infty}{W_2(\mu_n,\mu_{n+1})} \geq W_2(\mu_i,\mu^*)
\end{align}
where the last inequality uses triangle inequality
\begin{align*}
    W_2(\mu_i,\mu^*) \leq \sum_{n=i}^{N-1}{W_2(\mu_n,\mu_{n+1})} + W_2(\mu_N,\mu^*)
\end{align*}
and lets $N \to \infty$ with a notice that $\mu_N \xrightarrow{\wass} \mu^*$.

From \eqref{eq:totelesc} and \eqref{eq:outofidea2},
\begin{align}
\label{eq:outofideaaa}
    2 \sqrt{r_n} \leq \sqrt{r_{n-1}} + \dfrac{\sqrt{2(\eta^2 L_H^2+1)}}{(1-\theta) c}\left[(\mathcal{F}(\mu_n)-\mathcal{F}^*)^{1-\theta} - (\mathcal{F}(\mu_{n+1})-\mathcal{F}^*)^{1-\theta} \right].
\end{align}
Telescope \eqref{eq:outofideaaa} for $n=i$ to $+\infty$,
\begin{align}
\label{eq:makesense}
    s_i \leq \sqrt{r_{i-1}} + \dfrac{\sqrt{2(\eta^2 L_H^2+1)}}{(1-\theta) c}(\mathcal{F}(\mu_i)-\mathcal{F}^*)^{1-\theta} \leq \sqrt{r_{i-1}} + \dfrac{(2(\eta^2L_H^2 + 1))^{\frac{1}{2\theta}}}{(1-\theta)\eta^{\frac{1-\theta}{\theta}} c^{\frac{1}{\theta}}} r_{i-1}^{\frac{1-\theta}{2\theta}}.
\end{align}
where the last inequality uses \eqref{eq:noidea}. Since $r_i \to 0$ as $i \to \infty$, $r_i<1$ for $i$ sufficiently large. It follows from \eqref{eq:makesense} that: for $i$ sufficiently large
\begin{align*}
    s_i \leq M \sqrt{r_{i-1}} = M(s_{i-1}-s_i)
\end{align*}
where
\begin{align}
\label{eq:thisisforM}
    M= 1 + \dfrac{(2(\eta^2L_H^2 + 1))^{\frac{1}{2\theta}}}{(1-\theta)\eta^{\frac{1-\theta}{\theta}} c^{\frac{1}{\theta}}}.
\end{align}
Rewriting as $s_i \leq \frac{M}{M+1}s_{i-1}$, we derive $W_2(\mu_i,\mu^*) = O\left( \left(\frac{M}{M+1}\right)^{i}\right).$

\emph{(iii) If $\theta \in (1/2,1)$}

\eqref{eq:makesense} implies: for all $i$ sufficiently large,
\begin{align*}
    s_i \leq M r_{i-1}^{\frac{1-\theta}{2\theta}} = M (s_{i-1}-s_i)^{\frac{1-\theta}{\theta}}
\end{align*}
where $M$ is the same as in \eqref{eq:thisisforM}.

Applying Lem. \ref{lem:skp1}(iii), $s_i = O\left(i^{-\frac{1-\theta}{2\theta-1}} \right)$, which implies (by \eqref{eq:wouldhave}) $W_2(\mu_i,\mu^*) = O\left(i^{-\frac{1-\theta}{2\theta-1}} \right)$.
\section{Implementation of FB Euler}
\label{sect:implementFokkerPlanck}

Following a similar approach to the semi FB Euler's implementation outlined in Alg. \ref{semiFBEulerFokPl}, we present a practical implementation of FB Euler for the sampling context in Alg. \ref{FBEulerFokPl}.

\begin{algorithm}[H]
\caption{FB Euler for sampling}
    \begin{algorithmic}
\State{\textbf{Input:} Initial measure $\mu_0 \in \mathcal{P}_{2,\abs}(X)$, discretization stepsize $\eta>0$, number of steps $K>0$, batch size $B$.}
\For{$k=1$ to $K$}
\For{$i=1,2,\ldots$}
\State Draw a batch of samples $Z \sim \mu_0$ of size $B$;
\State $\Xi \leftarrow (I-\eta \nabla F) \circ \nabla_x \psi_{\theta_k} \circ (I-\eta \nabla F) \circ \nabla_x \psi_{\theta_{k-1}} \circ \ldots \circ (I-\eta \nabla F) (Z)$;
\State $\widehat{W_2^2} \leftarrow \frac{1}{B} \sum_{\xi \in \Xi}\Vert \nabla_x \psi_{\theta}(\xi) -\xi\Vert^2$;
\State $\widehat{\Delta \mathscr H} \leftarrow -\frac{1}{B}\sum_{\xi \in \Xi}{\log \det \nabla_x^2 \psi_{\theta}(\xi)}$.
\State $\widehat{\mathcal{L}} \leftarrow \frac{1}{2\eta}\widehat{W_2^2} + \widehat{\Delta \mathscr H}.$
\State Apply an optimization step (e.g., Adam) over $\theta$ using $\nabla_{\theta}\widehat{\mathcal{L}}$.
\EndFor
\State $\theta_{k+1} \leftarrow \theta.$
\EndFor
\end{algorithmic}
\label{FBEulerFokPl}
\end{algorithm}

\section{Numerical illustrations}
\label{supp:ni}

We perform numerical experiments in a high-performance computing cluster with GPU support. We use Python version 3.8.0. We allocate 8G memory for the experiments. The total running time for all experiments is a couple of hours. Our implementation is based on the code of \cite{mokrov2021large} (MIT license) with the DenseICNN architecture \cite{korotin2021wasserstein}. 

\label{appx:numillustration}

\subsection{Gaussian mixture}
Consider a target Gaussian mixture of the following form:
\begin{align*}
    \pi(x) \propto \exp(-F(x)):= \sum_{i=1}^K{\pi_i \exp\left(-\dfrac{\Vert x-x_i \Vert^2}{\sigma^2} \right)}.
\end{align*}

We write
\begin{align*}
    F(x) &= -\log \left( \sum_{i=1}^K{\pi_i \exp\left(-\dfrac{\Vert x-x_i \Vert^2}{\sigma^2} \right)} \right) \\
    &= -\log \left( \sum_{i=1}^K{\pi_i \exp\left(-\dfrac{\Vert x \Vert^2 + \Vert x_i \Vert^2 - 2\langle x, x_i \rangle}{\sigma^2} \right)} \right) \\
    &= -\log \left( \sum_{i=1}^K{\pi_i \exp\left(-\dfrac{\Vert x \Vert^2 }{\sigma^2} \right) \times \exp\left(-\dfrac{\Vert x_i \Vert^2 - 2\langle x, x_i \rangle}{\sigma^2} \right)} \right) \\
    &=   \dfrac{\Vert x \Vert^2}{\sigma^2} - \underbrace{\log\left( \sum_{i=1}^K{\pi_i \exp\left(-\dfrac{\Vert x_i \Vert^2 - 2\langle x, x_i \rangle}{\sigma^2} \right)} \right)}_{\text{convex}}
\end{align*}
which is DC. Note that the convexity of the second component is thanks to (a) log-sum-exp is convex and (b) the composite of a convex function and an affine function is convex.

\paragraph{Experiment details}
We set $K=5$ and randomly generate $x_1, x_2, \ldots, x_5 \in \mathbb{R}^2$. We set $\sigma = 1$. The initial distribution is $\mu_0 = \mathcal{N}(0, 16 I)$. We use $\eta=0.1$ for both FB Euler and semi FB Euler. We train both algorithms for $40$ iterations using Adam optimizer with a batch size of $512$ in which the first $20$ iterations use a learning rate of $5 \times 10^{-3}$ while the latter $20$ iterations use $2 \times 10^{-3}$. For the baseline ULA, we run $10000$ chains in parallel for $4000$ iterations with a learning rate of $10^{-3}$. 

We run the above experiment $5$ times where $x_1, x_2, \ldots, x_5$ are randomly generated each time. Fig. \ref{fig:gaussianmixtureandvmF} (b) reports the KL divergence along the training process where the mean curves are in bold and individual curves are displayed in a faded style. The final KL divergence (averaged across $5$ runs) of the ULA is reported as a horizontal line.

\subsection{Distance-to-set prior}

Let $\pi$ be the original prior, $\Theta$ be the constraint set that we want to impose, and the distance-to-set prior \cite{presman2023distance} is defined by, for some $\rho>0$,
\begin{align*}
    \Tilde{\pi}(\theta) \propto \pi(\theta) \exp\left(-\dfrac{\rho}{2} d(\theta, \Theta)^2 \right),
\end{align*}
that penalize exponentially $\theta$ deviating from the constraint set.

Given data $y$, using the this distance-to-set prior, the posterior reads
\begin{align*}
    \Bar{\pi}(\theta \vert y) \propto L(\theta \vert y) \pi(\theta) \exp \left( -\dfrac{\rho}{2} d(\theta, \Theta)^2\right).
\end{align*}
where $L(\theta \vert y)$ is the likelihood.

The structure of $\Bar{\pi}(\theta \vert y)$ depends on three separate components: the original prior, the likelihood, and the constraint set $\Theta$. In the ideal case, $\pi(\theta)$ and $L(\theta \vert y)$ are given in nice forms (e.g., log-concave), and $\Theta$ is a convex set. As a fact, if $\Theta$ is a convex set, $\theta \mapsto d(\theta,\Theta)^2$ is convex, making the whole posterior log-concave. If $\Theta$ is additionally closed, $d(\theta,\Theta)^2$ is L-smooth.

However, whenever $\Theta$ is nonconvex, the function $\theta \mapsto d(\theta,\Theta)^2$ is not continuously differentiable. This is induced by the Motzkin-Bunt theorem \cite[Thm. 9.2.5]{borwein2006convex} asserting that any Chebyshev set (a set $S \subset X$ is called Chebyshev if every point in $X$ has a unique nearest point in $S$) has to be closed and convex.

On the other hand, $d(\theta,\Theta)^2$ is always DC regardless the geometric structure of $\Theta$:
\begin{align}
\label{eq:distosetprior}
    d(\theta, \Theta)^2 &= \inf_{x \in \Theta} \Vert \theta - x \Vert^2 \notag\\
    &= \inf_{x \in \Theta}\left(\Vert \theta\Vert^2 + \Vert x \Vert^2 - 2 \langle x, \theta \rangle \right)\notag\\
    &= \Vert \theta\Vert^2 + \inf_{x \in \Theta}\left(  \Vert x \Vert^2 - 2 \langle x, \theta \rangle \right)\notag\\
    &= \Vert \theta\Vert^2 - \sup_{x \in \Theta}\left(  -\Vert x \Vert^2 + 2 \langle x, \theta \rangle \right).
\end{align}
Note that the supremum of an \emph{arbitrary} family of affine functions is convex.

Therefore, the log-DC structure of the whole posterior only depends on whether the original prior and the likelihood are log-DC, which is likely to be the case.

\textbf{Distance-to-set prior relaxed von Mises-Fisher}
In directional statistics, the von Mises-Fisher distribution is a distribution over unit-length vectors (unit sphere). It can be described as a restriction of a Gaussian distribution in a sphere. By using the distance-to-set prior, we can relax the spherical constraint as 

\begin{align*}
    \Bar{\pi}(\theta) \propto \exp(-F(\theta)):= \exp\left(-\kappa \dfrac{(\theta-\mu)^{\top}(\theta-\mu)}{2} \right) \times \exp \left( -\dfrac{\rho}{2} \dist(\theta, S)^2 \right)
\end{align*}
where $S$ denote the unit sphere in some $\mathbb{R}^d$ space. By the DC structure \eqref{eq:distosetprior} of the distance function, $F$ is DC with the following composition
\begin{align*}
    F(\theta) &= \left(\kappa \dfrac{\Vert \theta-\mu \Vert^2}{2} + \dfrac{\rho}{2}\Vert \theta \Vert^2 \right) - \dfrac{\rho}{2} \sup_{x \in S}\left( -\Vert x \Vert^2 + 2 \langle x, \theta \rangle \right)\\
    &:= G(\theta) - H(\theta).
\end{align*}
We also note that $H$ is not continuously differentiable because $S$ is nonconvex. Furthermore, $\rho \proj_S(\theta) \in \partial H(\theta)$ where $\proj_S(\theta)$ is the projection of $\theta$ onto $S$, which can be computed explicitly in this case.

\paragraph{Experiment details}

We consider a unit circle in $\mathbb{R}^2$ with centre $\mu = (1, 1.5)$. We set $\kappa = 1$ and $\rho=100$. The initial distribution is $\mu_0 = \mathcal{N}(0, 16 I)$. We use $\eta=0.1$ for both FB Euler and semi FB Euler. We train both algorithms for $40$ iterations using Adam optimizer with a batch size of $512$ in which the first $20$ iterations use a learning rate of $5 \times 10^{-3}$ while the latter $20$ iterations use $2 \times 10^{-3}$.


\end{document}